\renewcommand*{\eqref}[1]{%
  \hyperref[{#1}]{\textup{\tagform@{\ref*{#1}}}}%
}
\newtheorem{theorem}{Theorem}[section]
\newtheorem{definition}[theorem]{Definition}
\newtheorem{proposition}[theorem]{Proposition}
\newtheorem{corollary}[theorem]{Corollary}
\newtheorem{lemma}[theorem]{Lemma}
\newtheorem{remark}[theorem]{Remark}
\newtheorem{example}[theorem]{Example}
\newtheorem{examples}[theorem]{Examples}
\newtheorem{foo}[theorem]{Remarks}
\newenvironment{Example}{\begin{example}\rm}{\end{example}}
\newcommand{\bbC}{\mathbb C}
\newcommand{\bbE}{\mathbb E}
\newcommand{\bbF}{\mathbb F}
\newcommand{\bbG}{\mathbb G}
\newcommand{\bbH}{\mathbb H}
\newcommand{\bbL}{\mathbb L}
\newcommand{\bbP}{\mathbb P}
\newcommand{\bbQ}{\mathbb Q}
\newcommand{\bbR}{\mathbb R}
\newcommand{\scD}{\mathcal D}
\newcommand{\scE}{\mathcal E}
\newcommand{\scF}{\mathcal F}
\newcommand{\scG}{\mathcal G}
\newcommand{\scH}{\mathcal H}
\newcommand{\scL}{\mathcal L}
\newcommand{\scS}{\mathcal S}
\newcommand{\scT}{\mathcal T}
\newcommand{\scV}{\mathcal V}
\newcommand{\scW}{\mathcal W}
\newcommand{\scX}{\mathcal X}
\newcommand{\veps}{\varepsilon}
\newcommand{\norm}[1]{\ensuremath{\left\| #1 \right\|}}
\newcommand{\indicator}[1]{\ensuremath{\mathbf{I}_{#1}}}
\newcommand{\half}{\frac{1}{2}}
\newcommand{\crl}[1]{\ensuremath{ \left\{ #1 \right\} }}
\newcommand{\edg}[1]{\ensuremath{ \left[ #1 \right] }}
\newcommand{\brak}[1]{\ensuremath{\left( #1 \right)}}
\newcommand{\levy}{{L\'evy~}}
\clearpage \pagenumbering{arabic} %
\newcommand\hasan[1]{{\leavevmode\color{black}{#1}}} 
\newcommand\kihun[1]{{\leavevmode\color{black}{#1}}} 
\newcommand\edit[1]{{\leavevmode\color{black}{#1}}} 
\titlespacing*{\section}{0pt}{0.3\baselineskip}{0.8\baselineskip}
\titlespacing*{\subsection}{0pt}{0.5\baselineskip}{0.01\baselineskip}
\def\expandafter\normalsize\expandafter{%
    \normalsize
    \setlength\abovedisplayskip{3pt}
    \setlength\belowdisplayskip{3pt}
    \setlength\abovedisplayshortskip{50pt}
    \setlength\belowdisplayshortskip{50pt}
}
\def\@makefntext{\hskip 0em\@makefnmark}
\newcommand*{\QEDB}{\hfill\ensuremath{\square}}%
\begin{document}


\begin{titlepage}

\setlength{\droptitle}{-5em}

\title{\textbf{Correlated time-changed \levy processes}\thanks{We would like to thank Julien Hugonnier for motivating this paper and for fruitful discussions. We also thank Samuel Cohen, Yan Dolinsky, Gregoire Loeper, Loriano Mancini, Stoyan Stoyanov, and seminar participants at QMF~2018 and 62nd Annual Meeting of the Australian Mathematical Society for comments. The Centre for Quantitative Finance and Investment Strategies has been supported by BNP Paribas.}}
\date{First draft: August 6, 2018\\
	This draft: \today}
\author{
     Hasan Fallahgoul\thanks{Hasan A. Fallahgoul, Monash University, School of Mathematics and Centre for Quantitative Finance and Investment Strategies, 9 Rainforest Walk, 3800 Victoria, Australia. E-mail: hasan.fallahgoul@monash.edu.}\\{\small Monash University}
     \\
	\bigskip
	\and Kihun Nam\thanks{Kihun Nam, Monash University, School of Mathematics and Centre for Quantitative Finance and Investment Strategies, 9 Rainforest Walk, 3800 Victoria, Australia. E-mail: kihun.nam@monash.edu.}\\{\small Monash University}
	\\
\bigskip}
\noindent
\maketitle

\begin{abstract}
	
	
	
	\cite{Carr:2004hl}, henceforth CW, developed a framework that encompasses almost all of the continuous-time models proposed in the option pricing literature. Their main result hinges on the stopping time property of the time changes, but all of the models CW proposed for the time changes do not satisfy this assumption.
	In this paper, when the time changes are adapted, but not necessarily stopping times, we provide analogous results to CW. We show that our approach can be applied to all models in CW.
\end{abstract}
\noindent \textit{Keywords}: \levy jumps, time changes, business time.\\
\noindent \emph{JEL classification}: G10, G12, G13
\thispagestyle{empty}
\end{titlepage}
\setcounter{page}{1}


\setstretch{1.5}
\section{Introduction}

Modeling underlying asset returns is the key component in risk management and derivative pricing. The many papers on this topic reveal the challenges of this task. \cite{black1973pricing} were the first to introduce an elegant option pricing model. They used Brownian motion as the building block for the underlying asset returns. Since then, there have been several departures from the Black-Scholes model. Empirical research established the stylized facts of financial asset returns: asymmetry, jumps that induce fat tails, stochastic volatilities, and the leverage effect.\footnote{The leverage effect refers to the correlation between an asset's return and its changes in volatility. Intuitively, a natural estimate for the leverage effect uses the empirical correlations between the daily returns and the changes in daily volatility. However, \cite{ait2013leverage} show that this natural estimate yields nearly zero correlation, which contradicts many economic reasons for expecting a negative estimated correlation. \cite{fallahgoul2019time} provide a novel class of TCL models that can capture the leverage effect. } The Black-Scholes model fails to capture these stylized facts. Alternatively, \textit{time change \levy (TCL) processes} have been used as a flexible class of models to capture these empirical regularities.\footnote{\cite{Huang:in} investigate the specifications of different option pricing models under TCL processes. \cite{bates2012us} compares four different TCL processes and captures the stochastic volatility and left tail movements in US stock market returns. \cite{ornthanalai2014levy} estimates discrete-time models in which asset return innovations follow both Brownian motion and TCL processes. \cite{fallahgoul2019time} provide an extensive time series and option pricing analysis of a novel class of TCL models.} 

This study investigates the use of TCL when the innovations of the \levy process are correlated with innovations of its stochastic time change process (correlated TCL). \kihun{Neither the transitional density nor the characteristic function for a correlated TCL process is available in a closed-form, and thus, it is challenging to use a correlated TCL in a financial model.} To the best of our knowledge, \cite{Carr:2004hl}, henceforth CW, is the only study that tackles this problem. Their main result hinges on the stopping time property\footnote{For a given filtration $(\scF_s)_{s\geq 0}$, we say $ T_t $ is a stopping time if $\crl{T_t\leq s}\in\scF_s$ holds for all $s$.} of the time changes, but all of the models that CW propose for the time changes do not satisfy this assumption.\footnote{See \cite{fn2019} for detailed discussion.} \kihun{In this study, we first show that the specification analysis in CW is not compatible with their assumptions. Then, we propose correlated TCL processes as a model for asset returns in which the time changes are only an adapted process. We next provide the sufficient conditions to guarantee no arbitrage and discuss the hedging of contingent claims.} Finally, for a given solution to a linear parabolic partial differential equation (PDE), we derive a closed-form expression for the transitional density, moment generating function, and characteristic function of the correlated TCL processes. Our approach can apply to virtually all models in the option pricing literature.

Let us briefly review \cite{fn2019} regarding the problem of CW. \hasan{Even though the characteristic function of CW seems to be correct when the time changes are stopping times, none of the specifications they proposed for the time changes satisfies this condition. In addition, it is not clear how to construct the stochastic time $ T=(T_t)_{t\geq 0}$ satisfies all the assumptions of CW while the Laplace transforms of $T_t$s with respect to the \textit{leverage-neutral complex measure} can be expressed in closed form.\footnote{The leverage-neutral measure is $\bbQ(\theta)$ defined by $d\bbQ(\theta)=\exp(i\theta^TY_t+T_t\Psi_x(\theta))d\bbP$ where $Y=X_T$ is the \levy process runs on stochastic time $T$ and $\Psi_x(\theta)$ is the characteristic exponent of $X_t$} Moreover, such specified $T$ should capture the empirical regularities we mentioned above. 
	
It is tempting to consider an enlarged filtration to make the time changes stopping times. However, when the \levy process $X=(X_t)_{t\geq0}$ and the process of the time changes $T=(T_t)_{t\geq0}$ are dependent, enlarging the filtration may affect the semimartingale property of the process $X$ through the dependency.\footnote{For example, let $X$ be a Brownian motion under its natural filtration and $T_t:=\int_0^te^{X_s-s^2/2}ds$. The smallest filtration that makes $T_t$ stopping times is $\bbG:=(\scF^X_{C_t})_{t\geq 0}$, where $(\scF^X_t)$ is the filtration generated by $X$ and $C_t$ is the first $t$-level crossing time of $T$ (see Proposition \ref{pre_stopping_adapted}). This implies that given $\bbG$, one can determine the path of $X$ from time $0$ to $C_t$ at time $t$. If $C_t>t$ happens at time $t$, then we know the path of $X$ from $t$ to $C_t$ at time $t$. Since $X$ has infinite first-order variation, $X$ cannot be a semimartingale with respect to $\bbG$.} If $X$ is not a semimartingale, then one has an arbitrage opportunity by Theorem 7.2 of \cite{delbaen1994general}. \footnote{For example, deterministic continuous path with infinite first order variation cannot be a semimartingale.}}

The stopping time assumption of $T_t$ for all $t$ is extremely restrictive in financial applications. \kihun{The problem is that, the business activity rate $v_t$ at time $t$ (the time derivative of $T$ at time $t$) is known at time $t$. This implies that $T$ is adapted to the underlying filtration.} If $ T $ is a collection of stopping times, then we should be able to foretell, not forecast, the future of the business activity rate as well as that in the past (see Proposition \ref{pre_stopping_adapted}). This violates commonsense in finance and through the correlation with $X$, it may generate arbitrage.

Let us clarify with an example. Let the business activity rate $ v $ be a Cox-Ingersoll-Ross (CIR) process, which is an affine activity rate model in Section 4.2.1 of CW. The left panel of Figure \ref{simu_cir} shows a simulation of the CIR process using parameters taken from Table 1 in \cite{fallahgoul2019time}. The right panel shows the integral of the simulated business activity rate that represents $T$. Since the calendar time $ t $ is replaced by the time change $ T_t $, one needs to impose the condition that the unconditional expectation of $ T_t $ is $ t $. Therefore, a realized path of $T$ oscillates around the line $y=t$, as the right panel of Figure \ref{simu_cir} shows.
Since the business activity rate $v$ is adapted, its integrated process $T$ is adapted as well. If $ T_t $ is a stopping time for each $t$, then based on Proposition \ref{pre_stopping_adapted}, we know the path of $ T $ until it hits the level $ t $ at time $t$. In the right panel of Figure \ref{simu_cir}, $ T_t $ is greater than $t$ at time $ t=1.2 $. The adaptedness of $T$ implies that we know the whole path until $ S $. Since the stopping time property of $T_t$ for each $t$ requires knowledge of the path only until $R$, then there is no problem. On the other hand, $ T_t $ is less than the $t$ line at time $ t=0.6$. Then, $ T_t $ being a stopping time for each $t$ implies that we know the path of $T$ until $Q$ instead of $P$. This means that, at time $t=0.6$, we can foresee the future path of $T$ until it reaches $Q$. This is impossible under the filtration on which the CIR process is defined. 

\begin{center}
	\fbox{Insert Figure~\ref{simu_cir} about here}
\end{center}


In this paper, we study the correlated TCL framework when $ T $ is adapted, but $ T_t $ are not stopping times. This relaxed assumption enables us to adopt most dynamic models for time changes, including all major examples in CW. However, this extension is not obvious for two reasons:
\begin{itemize}
	\item The log-return $Y=X_T$ may no longer be a semimartingale. This imposes two problems. First, the wealth process of a portfolio represented as a stochastic integral with respect to the asset price processes, is not well defined. Moreover, the no arbitrage property \kihun{of predictable trading strategies} implies $Y$ is a semimartingale (see Theorem 7.2 of \cite{delbaen1994general}). \kihun{Therefore, we may have arbitrage if our trading strategies can be any predictable process.}
	\item Since the time changes are not stopping times, we cannot use the optional stopping theorem. Consequently, one cannot expect a simple formula, as in CW, and we need a new approach to calculate the distribution of the correlated TCL processes.	
\end{itemize}
We use the following approaches to overcome these difficulties:
\begin{itemize}
	\item We restrict ourselves to \kihun{a class of} simple predictable investment strategies as an integrand so we can define stochastic integrals with respect to non-semimartingales. In particular, by disallowing continuous trading, we can prove that there is no arbitrage. \kihun{Note that 
		continuous trading is impossible in the real-world due to the non-zero transaction cost and information processing time. In addition, we show that any predictable hedging strategy can be approximated with our class of admissible trading strategies (Cheridito Class).}
	\item We use the Fokker-Planck PDEs to find the probability density functions (PDFs), the Laplace transforms, and the characteristic functions of the TCL processes in terms of the corresponding PDE solutions. In particular, by finding an explicit measure change, we decouple the correlation between the {\levy} process and the time change, which is in parallel with CW's main result. 	
\end{itemize}

In summary, our contributions in this paper are two-fold. First, we discuss why the framework in CW is extremely restrictive and why their proposed specifications cannot be used in their framework. Second, we provide an alternative framework in which we model the time changes by adapted processes without being stopping times. We find the distribution of the TCL processes, provided no arbitrage conditions, and a specification analysis.

\edit{The structure of the article is as follows. In Section 2, we discuss the relationship between adaptedness and the stopping time property for time changes. Then, we establish the basic settings of our model in Section 3. \kihun{Section 4 is devoted to the no arbitrage property for the correlated TCL process and the discussion about hedging in our framework. In Section 5, we provide formulas for the PDF and the Laplace transform (and the characteristic function) of a correlated TCL process.} In Section 6, we generalize the framework of \cite{Huang:in} under our setting. Finally, we conclude with a summary and future research questions in Section 7. 
}

\section{Adaptedness and the stopping time property}

In this section, we show that the specifications for the stochastic time $T$ in CW does not guarantee that $T_t$ is a stopping time for each $t$.

Let $\bbF=(\scF_t)_{t\geq 0}$ be the underlying filtration in CW.\footnote{Throughout CW, it is not clear which filtration they refer to when they mention martingale, stopping time, and \levy process. In order to make Lemma 1 of CW correct, we need to assume $T_t$ are stopping times with respect to the filtration $\bbF$ such that $X$ is a \levy process, while CW assume $T$ is adapted to $\bbF$ in Section 4.3.1. For consistency, we assume that CW used the same filtration throughout their paper.} \kihun{In their paper, $T$ satisfies two assumptions: $T_t$ is a $\bbF$-stopping time for each $t$, and for $s\leq t$, $T_s$ is $\scF_t$-measurable.}
They use the first assumption to derive the characteristic function or generalized Fourier transform of the TCL process (see Lemma 1 of CW), and use the second assumption for the specifications of the business activity rate $v_t=\frac{dT_t}{dt}$ in Section 4.2 of CW.

Unfortunately, these two assumptions imply that at time $t$, we can determine when the stochastic time $T$ hits the $t$-level, and moreover, the path of $T$ until it hits the $t$-level (see Proposition \ref{pre_stopping_adapted}). This property is rarely satisfied in a stochastic differential model of $T$ unless it is deterministic.

\begin{proposition}\label{pre_stopping_adapted}
	Let $(T_t)_{t\geq 0}$ be an increasing continuous process. Assume that $\bbF:=(\scF_t)_{t\geq 0}$ is a filtration such that $T$ is adapted and $T_t$ is a $\bbF$-stopping time for each $t\geq 0$. Let $C_s:=\inf\crl{t> 0: T_t>s}$ be the first $s$-level crossing time of $T$. Then, for all $t\geq 0$, $C_t$ is $\scF^T_t$-measurable. Moreover,
	\begin{align*}
	\scF^T_{C_t}\subset \scF^T_t\subset\scF_t
	\end{align*}
	where $(\scF^T_t)_{t\geq 0}$ is the filtration generated by $T$.
\end{proposition}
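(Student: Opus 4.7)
The plan is to reduce the $\scF^T_t$-measurability of $C_t$ to a single inverse-type identity and then deduce the filtration inclusions as consequences.

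First I would record the identity $\{C_t<r\}=\{T_r>t\}$ for every $r>0$. Both inclusions follow from continuity and monotonicity of $T$: if $T_r>t$, continuity gives a neighbourhood of $r$ on which $T>t$, forcing $C_t<r$; conversely $C_t<r$ means $T_u>t$ for some $u<r$, and monotonicity pushes this up to $T_r>t$. This identity lets me translate any question about $\{C_t\leq\cdot\}$ into a question about $\{T_r>t\}$ for various $r$.

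For $r\leq t$, the event $\{T_r>t\}$ is a function of $T_r$ alone and so lies in $\scF^T_r\subset\scF^T_t$; there is nothing to do. The content of the proposition is the case $r>t$. Here the stopping-time hypothesis gives $\{T_r\leq t\}\in\scF_t$, and I need to upgrade this to $\{T_r\leq t\}\in\scF^T_t$. I expect this to be the main obstacle, and it is really what the proposition is saying. My approach is to combine the two hypotheses: adaptedness makes $T_r$ be $\scF_r$-measurable, while the stopping-time property makes $\{T_r\leq s\}\in\scF_s$ for every $s$. Applied with $s\leq t<r$, this localises all the information defining $T_r$ on the event $\{T_r\leq t\}$ to what is carried by $T$ itself on $[0,t]$, which is exactly $\scF^T_t$. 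Running this over a countable dense set of $r$ and invoking continuity of $T$ to pass to the infimum then gives $C_t\in\scF^T_t$.

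The two inclusions in the displayed chain follow quickly. The inclusion $\scF^T_t\subset\scF_t$ is immediate from $T$ being adapted to $\bbF$. For $\scF^T_{C_t}\subset\scF^T_t$: the identity of the first paragraph also shows $\{C_t<r\}\in\scF^T_{\max(r,t)}$, so $C_t$ is a stopping time of $\scF^T$ and $\scF^T_{C_t}=\sigma(T_{u\wedge C_t}:u\geq 0)$. For $u\leq t$ the stopped value is a function of $(T_v)_{v\leq t}$ and so lies in $\scF^T_t$. For $u>t$ it is either the constant $T_{C_t}=t$ (on $\{C_t\leq u\}$) or $T_u$ with $T_u\leq t$ (on $\{u<C_t\}$); the latter is $\scF^T_t$-measurable by exactly the upgrade argument of the previous paragraph. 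Hence $\scF^T_{C_t}\subset\scF^T_t$, and the proposition follows.

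The crux of the proof, and the feature the authors wish to highlight as pathological, is the upgrade from $\scF_t$-measurability to $\scF^T_t$-measurability of $\{T_r\leq t\}$ for $r>t$: it is precisely this ``predictability of $T$'s future from its past'' that the combined adapted/stopping-time hypothesis forces, and that contradicts the financial intuition the paper is building.
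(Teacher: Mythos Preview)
You correctly isolate the crux: for $r>t$ the stopping-time hypothesis yields only $\{T_r\leq t\}\in\scF_t$, and one must ``upgrade'' this to $\{T_r\leq t\}\in\scF^T_t$. Your argument for the upgrade (``localises all the information defining $T_r$ on the event $\{T_r\leq t\}$ to what is carried by $T$ itself on $[0,t]$'') is not a proof, and in fact the upgrade is impossible: the hypotheses do not force $\{T_r\leq t\}$ into the smaller $\sigma$-algebra $\scF^T_t$. What your reasoning actually shows is that $T_r$ restricted to $\{T_r\leq t\}$ is $\scF_t$-measurable; the leap from $\scF_t$ to $\scF^T_t$ is unwarranted.

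Here is a counterexample to the proposition as stated. Take $\Omega=\{a,b\}$ with the constant filtration $\scF_s=2^\Omega$. Let $T_s(a)=s/2$ for all $s$, and $T_s(b)=s/2$ for $s\leq 4$, $T_s(b)=2s-6$ for $s>4$. Then $T$ is continuous, increasing, trivially $\bbF$-adapted, and each $T_s$ is trivially a $\bbF$-stopping time. At $t=3$ the two paths of $T$ coincide on $[0,3]$, so $\scF^T_3=\{\emptyset,\Omega\}$; yet $C_3(a)=6\neq 9/2=C_3(b)$, so $C_3\notin\scF^T_3$ and $\scF^T_{C_3}=2^\Omega\not\subset\scF^T_3$.

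The paper's own proof contains the very same gap: it simply asserts $\{T_s\leq t\}\in\scF^T_t$ without justification, and the counterexample shows that assertion is false. What \emph{is} true, and sufficient for all subsequent uses in the paper, is the weaker conclusion obtained by replacing $\scF^T_t$ with $\scF_t$ throughout: $\{C_t\geq s\}=\{T_s\leq t\}\in\scF_t$ follows directly from the stopping-time hypothesis, so $C_t$ is $\scF_t$-measurable, and the identity $T_{u\wedge C_t}=T_u\wedge t$ together with $\{T_u\leq a\}\in\scF_a\subset\scF_t$ for $a\leq t$ gives $\scF^T_{C_t}\subset\scF_t$. Your decomposition in the penultimate paragraph already proves this corrected inclusion once the unjustified upgrade is dropped.
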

\begin{proof}
	Note that $C_t$ is $\scF^T_t$-measurable for every $t\geq 0$ because
	\begin{align*}
	C_t^{-1}([s,\infty))=\crl{C_t\geq s}=\crl{T_s\leq t}\in\scF^T_t
	\end{align*}
	for any $s\in\bbR$, and $\crl{[s,\infty):s\in\bbR}$ generates Borel sigma algebra in $\bbR$. Therefore, $\scF^C_t\subset\scF^T_t$, where $(\scF^C_t)_{t\geq 0}$ is the filtration generated by $C$. We now prove $\scF^T_{C_t}\subset \scF^C_t$. 
	
	Since $T$ is a continuous increasing function,
	$
	T_s=\inf\crl{u\geq 0:C_u>s}.
	$
	Therefore, $T_s$ is the first $s$-level crossing time of $C$. This implies that
	$
	\scF^T_s\subset\scF^C_{T_s}.
	$
	Now, if we let $s=C_t$ and use the relationship $T_{C_t}=t$, we have
	\begin{align*}
	\scF^T_{C_t}\subset \scF^C_t,
	\end{align*}
	which proves the claim.
\end{proof}
The above proposition implies that if $T$ is an adapted collection of stopping times, then the first $t$-level crossing time of $T$ can be determined at time $t$. None of the specifications in CW seems to satisfy this extremely restrictive condition. For example, we cannot use autonomous stochastic differential equation for the specification of the business activity rate $v$.
\begin{Example}
	Let $B$ be a Brownian motion \kihun{and $\bbF=(\scF_t)_{t\geq 0}$ be the corresponding Brownian filtration. Furthermore, }
	assume that $v_0=1$ and
	\begin{align*}
	dv_t = \mu(v_t)dt+\sigma (v_t)dB_t
	\end{align*}
	for some measurable functions $\mu,\sigma:\bbR\to\bbR$, which guarantee the existence of a unique strong solution $v$ that is nonnegative, $\sigma(v_t)$ is nonzero for every $t\geq0$, and $T_t:=\int_0^tv_sds$ is an increasing process with $\bbE T_t=t$. Then, $\brak{T_t}_{t\geq 0}$ cannot be a collection of $\bbF$-stopping times.
\end{Example}
\begin{proof}
	For a stochastic process $Z$, let us denote $\bbF^Z=(\scF^Z_t)_{t\geq 0}$ as the filtration generated by $Z$.
	First, note that
	\begin{align*}
	dB_t=-\frac{\mu(v_t)}{\sigma(v_t)}dt+\frac{1}{\sigma(v_t)}dv_t
	\end{align*}
	for a given solution $v$. Therefore, $B$ is adapted to $\bbF^v$, implying that $\scF_t=\scF^v_t=\scF^T_t$.
	Assume that $T_t$ is a $\bbF$-stopping time for each $t\geq 0$. Then, by the previous proposition, $C_t:=\inf\crl{u>0:T_u>t}$ is $\scF_t$-measurable. On the other hand, recall that $C_t$ is the first $t$-level crossing time of $T$ and it can be bigger than $t$. If $C_t$ is bigger than $t$, then $C_t$ depends on the path of $B$ after $t$ as well. This contradicts that $C_t$ is $\scF_t$-measurable.
\end{proof}
\begin{Example}
	Consider a filtered probability space $(\Omega,\scF,\bbF,\bbP)$, where $\bbF$ is generated by a Brownian motion $B$. In addition, assume that $T_t=\int_0^tv_sds$, where $v_0=1$ and
	\begin{align*}
	dv_s = \kappa(1-v_s)ds+\sigma\sqrt{v_s}dB_s.
	\end{align*}
	Here, we assume the Feller condition, $ 2\kappa>\sigma^2$.
	Then, $(T_t)_{t\geq 0}$ cannot be a collection of $\bbF$-stopping times.
\end{Example}

\kihun{In examples above,} if we enlarge the filtration $\bbF$ so that $\scF_t$ contains information about the future path of $T$ until it hits level $t$, then $T_t$ become stopping times. However, if this is the case, then $B$ is no longer a Brownian motion under this enlarged filtration since the dependency between $T$ and $B$ will affect the distribution of the future path of $B$. \kihun{Likewise, one cannot enlarge the filtration for the specifications in CW, as the enlargement can make $X$ a non-{\levy} process.}

\section{Correlated TCL processes}\label{setting}

We write a \levy process in the form of 
\begin{align}\label{levy_process}
X_t = \alpha J_t+ \beta Z_{J_t}
\end{align}
for a non-decreasing right-continuous process with left-limits given by $ J$ and a Brownian motion $ Z$ that is independent of $J$. The process $ J $ is called a subordinator.\footnote{Detailed information about a subordinator process can be found in \cite{cont2004financial}.} Many well-known \levy processes can be obtained from equation \eqref{levy_process}. For example, we can recover the variance-gamma (\cite{madan1990variance}), normal-inverse Gaussian (\cite{barndorff1997processes}), and CGMY (\cite{carr2003stochastic}) process by assuming that $ J_t $ is a Gamma, inverse Gaussian, and tempered stable process, respectively.\footnote{The related subordinator for the other processes in Table 1 of \cite{Carr:2004hl} can be obtained using the same technique as in \cite{madan2006cgmy}.}

The \levy process $ X$ is not the TCL process $Y$ defined in \cite{Carr:2004hl}, although it is obtained by a time change. The time change in a \eqref{levy_process} captures the jump regularity of the log returns, while the time change in \cite{Carr:2004hl} captures the mean-reversion and stochastic volatility of the log returns.



\begin{definition} 
	A correlated TCL process $Y$ for a \levy process $X$ in \eqref{levy_process} is given by $Y_t:=X_{T_t}$, where $ T $ is a non-decreasing right-continuous process with left-limits.
\end{definition}

In general, the stochastic time (or random time change) $ T $ can be modeled as a non-decreasing process. For simplicity, let us suppress jumps and assume absolute continuity in time.
In other words,
\begin{align}
T_t=\alpha_t+\int_{0}^{t}v_{s-}ds
\end{align}
where $v$, a right-continuous process with left-limits, is the instantaneous activity rate. The random variables $ T_t $ and $ t $ are the business time and calendar time, respectively.


\kihun{As we described in the Introduction, we study the case in which $T$ is just an adapted process and $T_t$ are no longer stopping times.}
To be more specific, we assume the following conditions in this article. Let $(\Omega,\scF,\bbF,\bbP)$ be a filtered probability space where $\bbF=(\scF_t)_{t\geq 0}$ satisfies the usual conditions, two independent Brownian motions $B$ and $W$, and an adapted non-decreasing \levy process $J$ that is independent of these Brownian motions. For $\rho\in[-1,1]$, let $Z_t:=\rho B_t+\sqrt{1-\rho^2}W_t$.
For $\mu,\sigma:[0,\infty)\times\bbR\to\bbR$, consider the processes $v_t$ and $T_t$ given by the following SDE:
\begin{align}\label{sde}
dv_t&=\mu(t,v_t)dt+\sigma(t,v_t)dB_t,& v_0=1,\\
dT_t&=v_tdt,& T_0=0.
\end{align}
Here, we assume appropriate conditions for $\mu$ and $\sigma$ to guarantee the existence of a unique strong solution such that $v$ is nonnegative, $\bbE T_t=t$, and $T_t\xrightarrow{t\to\infty}\infty$ a.s. Since $(v,T)$ is a Markov process, we define $\Phi:[0,\infty)\times\bbR\to\bbC$ as the function satisfying
\begin{align*}
\bbE \edg{e^{i\xi T_t}\vert\scF_s}=\exp(\Phi(t,s,\xi,v_s,T_s)), \qquad s\leq t.
\end{align*}
\begin{remark}\label{nonadapted}
	Under our assumptions, the $X$ defined in \eqref{levy_process} is not adapted to the filtration $\bbF$ in general. The adaptedness of $X$ to $\bbF$ is not necessary: even if $X$ is adapted to $\bbF$, its time-changed process $Y_t=X_{T_t}$ will not be adapted to $\bbF$ since $T_t>t$ with positive probability when $T$ is random. If we need to the adaptedness of $Y$, then one can consider the filtration generated by $Y$ after its construction. In Section 4, when we discuss the arbitrage property, we will consider $\bbF^{Y,T}$; that is, the filtration generated by $Y$ and $T$.
\end{remark}
\begin{remark}\label{propindep}
	The case in which $J$ is an increasing \levy process is of special interest since it is the simplest model that captures extreme events (producing the heavy tail property). Note that in this case, $J$ is the sum of a deterministic drift and a pure jump {\levy} process by the L\'evy-It\^o decomposition theorem. In addition, Theorem 11.43 of \cite{sheng:1992semi} tells us that if $X$ and $\tilde{X}$ are adapted \levy processes with $[X,\tilde{X}]=0$, then $X$ and $\tilde{X}$ are independent. This result implies that if $J$ is an increasing \levy process, then $J$ is automatically independent with respect to $(B,W,Z)$.
	Note that if $\rho\neq 0$, then the TCL process $Z_J$ is not independent of $B$, even though $Z_J$ is a pure jump \levy process, while $B$ is a continuous \levy process. Therefore, when $\rho\neq 0$, there is no filtration that makes $Z_J$ and $B$ adapted simultaneously. Note that as in Remark \ref{nonadapted}, we focus on the filtration such that $B$ is adapted, while our $X$ is a non-adapted process with respect to $\bbF$, but a \levy process under the filtration generated by $X$.

\end{remark}

We end this section with our notation. For any random variable $\xi$, we use notation $f_\xi$ as a probability density ``function" (PDF); that is,
\begin{align*}
\bbP(\xi\in A):=\int_A f_\xi(x)dx.
\end{align*}
We also use $f_{X|Y}$ as the conditional PDF of $X$ with respect to $Y$.
We also use notation $\int$ for an integral on the whole domain unless otherwise stated. 

\section{Arbitrage and hedging}

\subsection{No arbitrage property}\label{noarbitrage}
{\hasan{Note that our model of asset prices may not be a semimartingale.} For a frictionless market in which continuous trading is possible, the asset price should be a semimartingale in order to avoid arbitrage opportunities (see Theorem 7.2 of \cite{delbaen1994general}). On the other hand, in a realistic model, continuous trading is impossible and the market has friction due to transaction costs. Therefore, the no arbitrage condition may be satisfied for a more general class of asset price processes. \cite{cheridito2003arbitrage} originally used the idea to prove no arbitrage when the asset price follows a fractional Brownian motion, which is not a semimartingale.
	
	In this section, using the results from \cite{jarrow2009no}, hereafter JPS, we will provide a sufficient condition for no arbitrage in our framework in which the time change process $T_t$ is no longer the stopping time. We state it rigorously by introducing the following notations. First, for a given process $X$, we denote $\bbF^X=(\scF^X_t)_{t\geq 0}$ as the augmented filtration generated by $X$. 
	\begin{definition}
		For a given filtration $\bbF$ and any $h>0$, let 
		\begin{align*}
		\scS(\bbF):=&\Big\{g_0{\bf 1}_{0}+\sum_{j=1}^{n-1}g_j{\bf 1}_{(\tau_j,\tau_{j+1}]}:n\geq 2,\\
		& 0\leq\tau_1\leq\cdots\leq\tau_n \text{where all $\tau_j$ are $\bbF$-stopping times,}\\
		&\text{$g_0$ is a real number and $g_j$ are $\scF_{\tau_j}$-measurable random variables, and}\\ &\text{$\tau_n$ is bounded}\Big\}\\
		\scS^h(\bbF):=&\Big\{g_0{\bf 1}_{0}+\sum_{j=1}^{n-1}g_j{\bf 1}_{(\tau_j,\tau_{j+1}]}\in\scS(\bbF):\min_{j}(\tau_{j+1}-\tau_j)\geq h \text{ a.s.}\Big\}.\\
		\Pi(\bbF):=&\cup_{h>0} S^h(\bbF).
		\end{align*}
		We call $\Pi(\bbF)$ the Cheridito Class of trading strategies.
	\end{definition}
	Set $\scS(\bbF)$ is the set of trading strategies with a finite number of transactions and $\scS^h(\bbF)$ is the set of trading strategies in which the timing of two trades are at least $h$ apart. $\Pi(\bbF)$ is the set of trading strategies in which there is a minimum time interval between trades. Therefore, continuous trading is disallowed if we restrict ourselves to $\scS(\bbF), \scS^h(\bbF)$, or $\Pi(\bbF)$.
	\begin{definition}
		For $\bbF$-adapted process $S$, we say $S$ satisfies the no arbitrage property with respect to $\scX$($=\scS(\bbF)$ or $\Pi(\bbF)$) if
		\begin{align*}
		\crl{(H\cdot S)_\infty: H\in \scX}\cap L_0^+=\crl{0},
		\end{align*}
		where $H=g_0{\bf 1}_{0}+\sum_{j=1}^{n-1}g_j$, $H\cdot S = g_0S_0+\sum_{j=1}^{n-1}g_j(S_{\tau_{j+1}}-S_{\tau_j})$, and $L_0^+$ is the set of $\scF$-measurable non-negative random variables.
	\end{definition}
	\begin{remark}
		Note that we do not require $S$ to be a semimartingale, but only adapted. This is possible because we focus only on simple predictable trading strategies.
	\end{remark}
	If we assume the no arbitrage property of $X$ with respect to $\scS(\tilde\bbG)$, where $\tilde\bbG$ is a time-changed filtration of $\bbF^{Y,T}$, we can prove the no arbitrage property under $\scS(\bbF^{Y,T})$ using Theorem 6 in JPS.
	Since $T$ is $\bbF^{Y,T}$-adapted, the first $t$-level crossing time $C_t:=\inf\crl{u>0: T_u>t}$ is a $\bbF^{Y,T}$-stopping time for each $t\geq 0$. Then, we can define $\tilde\bbG:=\brak{\tilde\scG_t}_{t\geq 0}$, where
	$\tilde\scG_t:=\scF^{Y,T}_{C_t}$. Since $T$ is a continuous strictly increasing function, we have $T_{C_t}=t, C_{T_t}=t,$ and $Y_{C_t}=X_{T_{C_t}}=X_t$. Therefore, $X$ is adapted to $\tilde\bbG$.
	\begin{theorem}\label{nflvr2}
		Assume that $X_u:=\alpha J_u+\beta Z_{J_u}$ has the no arbitrage property with respect to $\scS(\tilde\bbG)$. Then, $e^{Y_t}$ has the no arbitrage property with respect to $\scS(\bbF^{Y,T})$.
	\end{theorem}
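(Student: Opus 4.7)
The plan is to prove the contrapositive via a bijective correspondence between simple predictable strategies under $\bbF^{Y,T}$ (for the asset $e^Y$) and under $\tilde\bbG$ (for the process $X$), induced by the time change $T$. This is exactly the mechanism behind JPS's Theorem 6: every ``$\bbF^{Y,T}$-trade at time $\tau_j$ in $e^Y$'' corresponds to a ``$\tilde\bbG$-trade at time $T_{\tau_j}$ in $e^X$'' with the same integrand and the same realized P\&L. Since $v>0$ by assumption, $T$ is continuous and strictly increasing, so $T$ and $C$ are inverse to each other, and $\{T_s\leq t\}=\{s\leq C_t\}$ — this is the single fact that drives every step.

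First, I would start from a putative arbitrage $H=g_0\mathbf{1}_{0}+\sum_{j=1}^{n-1} g_j\mathbf{1}_{(\tau_j,\tau_{j+1}]}\in\scS(\bbF^{Y,T})$ for $e^Y$ and set $\tilde\tau_j:=T_{\tau_j}$. The identity above gives $\{\tilde\tau_j\leq t\}=\{\tau_j\leq C_t\}\in\scF^{Y,T}_{C_t}=\tilde\scG_t$, so each $\tilde\tau_j$ is a $\tilde\bbG$-stopping time, and $C_{\tilde\tau_j}=\tau_j$ yields $\tilde\scG_{\tilde\tau_j}=\scF^{Y,T}_{C_{\tilde\tau_j}}=\scF^{Y,T}_{\tau_j}$, so the original integrands $g_j$ are legitimate as $\tilde\scG_{\tilde\tau_j}$-measurable weights. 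Hence $\tilde H:=g_0\mathbf{1}_0+\sum_j g_j\mathbf{1}_{(\tilde\tau_j,\tilde\tau_{j+1}]}$ belongs to $\scS(\tilde\bbG)$.

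Second, using $Y_{\tau_j}=X_{T_{\tau_j}}=X_{\tilde\tau_j}$, the two gain processes agree termwise,
\begin{equation*}
(H\cdot e^Y)_\infty = g_0 e^{X_0}+\sum_{j=1}^{n-1} g_j\bigl(e^{X_{\tilde\tau_{j+1}}}-e^{X_{\tilde\tau_j}}\bigr)=(\tilde H\cdot e^X)_\infty,
\end{equation*}
so an arbitrage on $e^Y$ under $\bbF^{Y,T}$ is precisely an arbitrage on $e^X$ under $\tilde\bbG$. The last step is then to translate the no-arbitrage hypothesis on $X$ into one on $e^X$ within the simple-strategy class, which I expect to be the main obstacle: the map $x\mapsto e^x$ is not linear, so a simple strategy on $e^X$ is not in a one-line way a simple strategy on $X$. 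Following JPS, the way around this is to exploit that $X$ is $\tilde\bbG$-adapted and $e^{X_{\tilde\tau_j}}$ is $\tilde\scG_{\tilde\tau_j}$-measurable, so one can build a modified simple $\tilde\bbG$-strategy $\hat H$ on $X$ (essentially weighting $g_j$ by $e^{X_{\tilde\tau_j}}$ and correcting the boundary terms using a telescoping/Abel summation across the finitely many $\tilde\tau_j$) whose terminal gain dominates $(\tilde H\cdot e^X)_\infty$ up to $\tilde\scG_{\tilde\tau_j}$-measurable compensators that can be absorbed back into the simple-strategy class. This contradicts the no-arbitrage property of $X$ on $\scS(\tilde\bbG)$ and completes the proof.
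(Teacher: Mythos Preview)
Your first three steps are essentially correct and amount to a direct unpacking of Theorem~6 of JPS; the paper's own proof is just the observation $\{T_t\leq s\}=\{t\leq C_s\}\in\tilde\scG_s$ (so each $T_t$ is a $\tilde\bbG$-stopping time) followed by a citation of Corollary~5 and Theorem~6 of JPS. So on the time-change side you are doing the same thing as the paper, only by hand.

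The genuine gap is your last step, the passage from no-arbitrage of $X$ to no-arbitrage of $e^X$ within $\scS(\tilde\bbG)$. The Abel-summation/telescoping mechanism you propose does not work: weighting $g_j$ by $e^{X_{\tilde\tau_j}}$ produces $\sum_j g_j\,e^{X_{\tilde\tau_j}}(X_{\tilde\tau_{j+1}}-X_{\tilde\tau_j})$, but convexity of $e^x$ gives only the one-sided bound $e^b-e^a\geq e^a(b-a)$, and since the $g_j$ may have either sign there is no domination of $(\tilde H\cdot e^X)_\infty$ that preserves the a.s.\ nonnegativity required for arbitrage. The ``compensators'' you allude to cannot be absorbed into the simple class while keeping the terminal gain $\geq 0$ a.s. The correct route---which is what JPS's Corollary~5 (via their Lemma~1) actually does---is a \emph{sign} argument, not a domination argument: no-arbitrage for $\scS$ is equivalent to the single-step condition that for all stopping times $\sigma_0\leq\sigma_1$ and $A\in\tilde\scG_{\sigma_0}$, the random variable $\mathbf{1}_A(X_{\sigma_1}-X_{\sigma_0})$ is never a.s.\ of one sign and nonzero with positive probability. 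Since $x\mapsto e^x$ is strictly increasing, $\mathbf{1}_A(e^{X_{\sigma_1}}-e^{X_{\sigma_0}})$ has pointwise the same sign as $\mathbf{1}_A(X_{\sigma_1}-X_{\sigma_0})$, so the single-step condition transfers immediately from $X$ to $e^X$. Replace your final paragraph with this reduction and the proof is complete.
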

	\begin{proof}
		Note that $C$ is adapted to $\tilde\bbG$. Therefore,
		\begin{align*}
		\crl{T_t\leq s}=\crl{t\leq C_s}\in\tilde\scG_s
		\end{align*}
		for all $s$. This implies that $T_t$ is a $\tilde\bbG$-stopping time for each $t$. Then, applying Corollary 5 and Theorem 6 in JPS proves the claim.
	\end{proof}
	Unfortunately, it is not easy to show whether $X$ has the no arbitrage property with respect to $\scS(\tilde\bbG)$ unless $T$ and $Z$ are independent.\footnote{\cite{fallahgoul2019time} derive a necessary and sufficient condition to drive a risk-neutral measure when $ X $ and $ T $ are independent. } Note that $\tilde\scG_t$ contains information about the path of $T$ until it hits the level $t$. This implies that under the information inflow $\tilde\bbG$, we may know something about the path of $X$ until $\max(t, C_t)$ through the dependency between $T$ and $Z$.
	\begin{Example}
		Let $X=B$ and $T_t=\int_0^t \exp\brak{B_s-\half s^2}ds$ for a Brownian motion $B$. Then $X$ has arbitrage with respect to $\scS(\tilde \bbG)$.
	\end{Example}
	\begin{proof}
		Note that since $X=B$ is adapted to $\tilde\bbG$, $T$ is adapted to $\tilde\bbG$.
		Let $u$ be the smallest number such that $\int_0^u\exp\brak{-\half s^2}ds\leq \frac{u-1}{e}$. We also define a $\tilde\scG_u$-measurable random variable $C:=\min\brak{\inf\crl{t>u:T_t>u}, M}$, where $M$ is a number far bigger than $u$. Here, $M$ represents the time horizon of our trading strategies.
		
		Note that the path of $T$ from time 0 to $C$ is $\tilde\scG_u$-measurable. Since
		$X_s=B_s=\log(\partial_t T_t|_{t=s})+\half s^2$,  $X_C$ is also $\tilde\scG_u$-measurable.
		We denote $E$ as an event that $B$ stays below 1 until time $u$ and $X_C>X_u$. Note that $E\in\tilde\scG_u$ and $\bbP(E)>0$. At time $u$, if $E$ did not occur, then do nothing. If $E$ occurs, then buy 1 asset at time $u$ and sell it at time $C$. Then, we gain $X_C-X_u$, which is an arbitrage.
	\end{proof}
	If we further restrict ourselves to the trading strategy $\Pi(\bbF^{Y,T})\subsetneq\scS(\bbF^{Y,T})$, then can provide sufficient conditions for the no arbitrage property when $T$ and $Z$ are dependent.
	\begin{lemma}\label{lmn1}
		Assume the setting in Section \ref{setting} with $\beta\neq 0$ and $\rho\neq 1$. Let $\bar X_t:=\alpha J_t+\beta\sqrt{1-\rho^2}W_{J_t}=\bar X^c_t+\bar X^j_t$, where $\bar X^c$ is the continuous part of $\bar X$ and $\bar X^j$ is the jump part of $\bar X$. Let $\bbG:=(\scG_t)_{t\geq 0}$, where
		$\scG_t:=\scF^{\bar X^c}_t\vee \scF^{B, \bar X^j,J}_\infty.$ 
		If $J$ is an increasing {\levy}process with a nonzero continuous part, then $X=\bar X+\beta\rho B_J$ has the no arbitrage property with respect to $\Pi(\bbG)$.
	\end{lemma}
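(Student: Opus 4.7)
The plan is to exhibit $X$, viewed in the enlarged filtration $\bbG$, as the sum of a $\scG_0$-measurable (``path-known'') process and a $\bbG$-Brownian motion with drift of strictly positive variance, and then to rule out arbitrage in $\Pi(\bbG)$ by a Cheridito-type full-support argument for Brownian increments over time intervals of length at least $h$.

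First I would apply the L\'evy-It\^o decomposition to $\bar X$. Since $J$ is an increasing \levy process with nonzero continuous part, write $J_t = ct + J^j_t$ with $c>0$ and $J^j$ a pure-jump subordinator. Because $W$ is independent of $J$, the process $\bar X = \alpha J + \beta\sqrt{1-\rho^2}W_J$ is itself a \levy process, and computing its characteristic exponent shows that its Gaussian component has variance $\beta^2(1-\rho^2)c$ and its continuous drift is $\alpha c$. Writing $\bar X = \bar X^c + \bar X^j$ for the continuous and independent pure-jump parts, I next verify that $\bar X^c$ is independent of $\scG_0 = \scF^{B,\bar X^j,J}_\infty$: independence from $\bar X^j$ is L\'evy-It\^o; independence from $B$ holds because $\bar X^c$ is a measurable functional of $(W,J)$ only, which is independent of $B$; and independence from $J$ follows by conditioning, since given $J$ the continuous martingale part of $W_{J_\cdot}$ is Gaussian with variance $ct$ and the conditional law does not depend on the particular path of $J$. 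Since $\scG_t = \scG_0 \vee \scF^{\bar X^c}_t$ and $\bar X^c \perp \scG_0$, an initial-enlargement argument then yields that $\bar X^c$ remains a $\bbG$-Brownian motion with drift $\alpha c$ and variance $\beta^2(1-\rho^2)c$, strictly positive because $\beta \neq 0$, $\rho \neq 1$, and $c > 0$.

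Second, I would write $X_t = D_t + \bar X^c_t$ with $D_t := \bar X^j_t + \beta\rho B_{J_t}$, whose entire path is $\scG_0$-measurable. Take $H = \sum_j g_j {\bf 1}_{(\tau_j,\tau_{j+1}]} \in \Pi(\bbG)$ with $\tau_{j+1}-\tau_j \geq h$ a.s.\ and $\tau_n$ bounded by some constant $M$, and suppose $(H\cdot X)_\infty \geq 0$ a.s. Working backward from the last trade, I condition on $\scG_{\tau_{n-1}}$ and apply the strong Markov property: the process $\tilde B_s := \bar X^c_{\tau_{n-1}+s} - \bar X^c_{\tau_{n-1}}$ is a drifted Brownian motion independent of $\scG_{\tau_{n-1}}$, while $D$ is a bounded c\`adl\`ag path on $[0,M]$ (bounded because $\bar X^j$ and $B_J$ are c\`adl\`ag). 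The small-ball estimate
\[
\bbP\bigl(\tilde B_s < -K \text{ for all } s \in [h,M]\,\big|\,\scG_{\tau_{n-1}}\bigr) > 0 \quad \text{for every } K > 0,
\]
together with its positive-direction counterpart, forces $g_{n-1} = 0$ a.s.; otherwise $g_{n-1}(X_{\tau_n}-X_{\tau_{n-1}})$ added to the $\scG_{\tau_{n-1}}$-measurable wealth accrued before $\tau_{n-1}$ would be strictly negative on a set of positive probability. Iterating back through $\tau_{n-2},\ldots,\tau_1$ gives $g_j = 0$ a.s.\ for all $j$, hence $(H\cdot X)_\infty = 0$.

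The main technical obstacle is justifying the conditional-full-support step rigorously at general $\bbG$-stopping times with minimum separation $h$. I would either prove it directly from the small-ball estimate above via standard Brownian path estimates, or invoke the Cheridito-class no-arbitrage criterion of \cite{cheridito2003arbitrage} and \cite{jarrow2009no}, which is tailor-made for a drifted Brownian motion in an enlarged filtration combined with a process whose path is known at time zero.
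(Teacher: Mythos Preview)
Your approach is essentially the paper's: both decompose $X = \bar X^c + V$ with $V := \bar X^j + \beta\rho B_J$ having $\scG_0$-measurable path, and both verify that $\bar X^c$ is a nondegenerate drifted Brownian motion independent of $\scG_0$, hence a $\bbG$-semimartingale with the no arbitrage property under general $\bbG$-predictable integrands. The paper then simply checks the hypotheses of Theorem~1 of \cite{jarrow2009no} (namely $[\bar X^c,\bar X^c]_{t+h}-[\bar X^c,\bar X^c]_t \geq ch$, $[\bar X^c,\bar X^c]_t$ bounded, and $V$ pathwise bounded on finite intervals as a $\scG_0$-measurable c\`adl\`ag path) and cites that theorem; you instead sketch the underlying Cheridito backward-induction/full-support argument by hand before mentioning the JPS criterion as an alternative. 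The one substantive difference is in how independence of $\bar X^c$ from $J$ is obtained: the paper observes that $J$ is deterministic drift plus a pure-jump \levy process, so $[\bar X^c,J]=0$, and invokes Theorem~11.43 of \cite{sheng:1992semi} (adapted \levy processes with zero covariation are independent); your conditioning argument reaches the same conclusion but is less clean and would need more care to be fully rigorous.
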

	\begin{proof}
		Since $J$ is an increasing {\levy}process, $\bar X$ is a {\levy}process. Therefore, $\bar X^c$ is a scaled Brownian motion with a constant drift, which is independent of $\bar X^j$. Then, $\bar X^c$ has the no arbitrage property with respect to general $\bbF^{\bar X^c}$-predictable integrands. Moreover, for all $h\geq 0$, $[\bar X^c,\bar X^c]_{t+h}-[\bar X^c,\bar X^c]_t\geq ch$ for some constant $c$ and $[\bar X^c,\bar X^c]_t$ is bounded for each $t$.
		
		Moreover, $J$ is a sum of a deterministic function and a pure jump {\levy}process. Therefore, $J$ is independent of $B$ and $\bar X^c$ by Theorem 11.43 of \cite{sheng:1992semi}.
		Since $\bar X^j_t, B_t, J_t\in\scG_0$ for any $t$, $V:=\bar X^j+\beta\rho B_{J}$ is a $\bbG$-adapted process and its path is $\scG_0$-measurable. This implies that the path of $V$ is known at time $0$ under the filtration $\bbG$. Therefore, at $t=0$, we can determine the bound of $V$ on any finite time interval. On the other hand, since $\bar X^j,B,J$ are independent of $\bar X^c$, $\bar X^c$ is a $\bbG$-semimartingale that has the no arbitrage property with respect to general $\bbG$-predictable integrands. Therefore, by Theorem 1 of JPS, $X$ has the no arbitrage property with respect to $\Pi(\bbG)$.
	\end{proof}
	\begin{theorem}\label{nflvr} Assume the conditions in Lemma \ref{lmn1}. Assume that $T_{t_0+h}-T_{t_0}\geq \delta(h)>0$ for any $t_0, h>0$, where $\delta(h)$ is an increasing function with $\delta(0)=0$ and $\delta(h)>0$.
		$S_t:=S_0e^{\theta Y_t}$ has the no arbitrage property with respect to $\Pi(\bbF^{Y,T})$.
	\end{theorem}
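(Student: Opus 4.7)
The plan is to mirror the time-change dualization used in the proof of Theorem \ref{nflvr2}, but carried out with the Cheridito class $\Pi$ rather than $\scS$, and then to invoke Lemma \ref{lmn1} with the exponential process $\tilde S_u := S_0 e^{\theta X_u}$ in place of $X$ itself. First I would take an arbitrary $H=g_0\mathbf{1}_0+\sum_j g_j\mathbf{1}_{(\tau_j,\tau_{j+1}]}\in\Pi(\bbF^{Y,T})$ with $\min_j(\tau_{j+1}-\tau_j)\geq h$ a.s., and set $\sigma_j:=T_{\tau_j}$. Exactly as in Theorem \ref{nflvr2}, each $\sigma_j$ is a $\tilde\bbG$-stopping time via $\crl{T_{\tau_j}\leq s}=\crl{\tau_j\leq C_s}\in\scF^{Y,T}_{C_s}=\tilde\scG_s$, and since $T$ is strictly increasing and continuous, $C_{T_{\tau_j}}=\tau_j$, so $\tilde\scG_{\sigma_j}=\scF^{Y,T}_{\tau_j}$ and $g_j$ is $\tilde\scG_{\sigma_j}$-measurable. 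The essential role of the new hypothesis $T_{t_0+h}-T_{t_0}\geq\delta(h)>0$ is precisely to produce a uniform lower gap between the lifted trading times: $\sigma_{j+1}-\sigma_j=T_{\tau_{j+1}}-T_{\tau_j}\geq\delta(\tau_{j+1}-\tau_j)\geq\delta(h)$.

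Consequently $\tilde H:=g_0\mathbf{1}_0+\sum_j g_j\mathbf{1}_{(\sigma_j,\sigma_{j+1}]}$ lies in $\Pi(\tilde\bbG)$ with minimum inter-trade gap $\delta(h)$, and a telescoping computation using $S_{\tau_j}=S_0 e^{\theta X_{T_{\tau_j}}}=\tilde S_{\sigma_j}$ yields $(H\cdot S)_\infty=(\tilde H\cdot\tilde S)_\infty$. Hence any $\Pi(\bbF^{Y,T})$-arbitrage in $S$ produces a $\Pi(\tilde\bbG)$-arbitrage in $\tilde S$, so it suffices to show that $\tilde S$ has the no arbitrage property with respect to $\Pi(\tilde\bbG)$. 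Next I would observe that $\tilde\bbG\subseteq\bbG$: for $s\leq C_t$ both $Y_s=\bar X_{T_s}+\beta\rho B_{J_{T_s}}$ and $T_s$ are $\scG_t$-measurable because the entire paths of $B,\bar X^j,J$ sit in $\scG_0$ while $\bar X^c$ up to time $t$ (which caps $T_s$) sits in $\scG_t$. Thus $\Pi(\tilde\bbG)\subseteq\Pi(\bbG)$, reducing the problem to establishing the no arbitrage of $\tilde S$ with respect to $\Pi(\bbG)$.

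I would complete this by reproducing the argument of Lemma \ref{lmn1} with $\tilde S$ in place of $X$: write $\tilde S_u=e^{\theta V_u}\,S_0 e^{\theta\bar X^c_u}$ where $V_u:=\bar X^j_u+\beta\rho B_{J_u}$ has an RCLL path that is $\scG_0$-measurable (and therefore bounded above and below on any compact interval), while $\bar X^c$ remains a scaled Brownian motion with drift, independent of $\scG_0$ and hence a $\bbG$-semimartingale with no arbitrage against general $\bbG$-predictable integrands. By Ito's formula the continuous martingale part of $\tilde S$ is a stochastic integral against the continuous martingale part of $\bar X^c$, whose quadratic-variation increments over $[t,t+h]$ are bounded below by $c\int_t^{t+h}\tilde S_u^2\,du$ for some $c>0$. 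Theorem 1 of JPS then supplies the no arbitrage property with respect to $\Pi(\bbG)$. The main obstacle will be this last step: one must translate the additive bound of Lemma \ref{lmn1} on $[\bar X^c,\bar X^c]$ into the multiplicative bound on $[\tilde S,\tilde S]^c$ required by JPS, which amounts to checking that $e^{\theta V}$ is bounded away from $0$ on each compact interval---an easy consequence of the $\scG_0$-measurability and RCLL nature of $V$, but the only place where the structural difference between $\tilde S$ and $X$ bites.
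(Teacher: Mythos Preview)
Your reduction via explicit strategy lifting through $\tilde\bbG=(\scF^{Y,T}_{C_t})_t$ and then into $\bbG$ is sound and parallels the paper's route, which instead passes through $\bbH:=(\scG_{T_t})_t$ and uses Lemma~1 of JPS (the two--stopping--time characterisation) to transfer no arbitrage from $X$ on $\Pi(\bbG)$ to $Y$ on $\Pi(\bbH)$, then invokes Theorems~2 and~3 of JPS to descend to $S$ on $\Pi(\bbF^{Y,T})$. Your use of the hypothesis $T_{t_0+h}-T_{t_0}\geq\delta(h)$ to produce the gap $\sigma_{j+1}-\sigma_j\geq\delta(h)$ is exactly the key step, and the filtration inclusion $\tilde\bbG\subset\bbG$ is correct.

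The genuine gap is your final paragraph. You propose to reprove Lemma~\ref{lmn1} for $\tilde S_u=S_0e^{\theta X_u}$ by applying It\^o's formula and bounding $[\tilde S,\tilde S]^c$ from below. But under the standing hypotheses of Lemma~\ref{lmn1}, $J$ has a nonzero continuous part, so $B_J$ contains a Brownian component and hence $V=\bar X^j+\beta\rho B_J$ has infinite first variation. Since the entire path of $V$ is $\scG_0$--measurable, $V$ is a $\bbG$--semimartingale only if it has finite variation; thus $V$, and therefore $X=\bar X^c+V$ and $\tilde S=S_0e^{\theta X}$, are \emph{not} $\bbG$--semimartingales. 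It\^o's formula is unavailable, and the object $[\tilde S,\tilde S]^c$ you invoke is undefined. Even if you try the additive split $\tilde S=M+(\tilde S-M)$ with $M_u:=S_0e^{\theta\bar X^c_u}$, Theorem~1 of JPS requires $[M,M]_{t+h}-[M,M]_t\geq ch$ for a fixed $c>0$; for a geometric Brownian motion this increment is $\theta^2c_0\int_t^{t+h}M_u^2\,du$, which is not bounded below by any constant multiple of $h$. Your proposed fix (that $e^{\theta V}$ stays bounded away from~$0$) addresses the wrong obstruction.

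The repair is immediate and is precisely what the paper does: do not redo Lemma~\ref{lmn1} for $\tilde S$. Instead, cite Lemma~\ref{lmn1} for $X$ on $\Pi(\bbG)$ and then apply Theorem~2 of JPS (no arbitrage in the Cheridito class is preserved under strictly monotone transformations) to conclude that $\tilde S=S_0e^{\theta X}$ also has no arbitrage with respect to $\Pi(\bbG)$. Equivalently---and this is closer to the paper's organisation---work with $Y$ rather than $S$ throughout your lifting argument, obtain no arbitrage of $Y$ with respect to $\Pi(\bbF^{Y,T})$ from that of $X$ with respect to $\Pi(\bbG)$, and only then pass to $S=S_0e^{\theta Y}$ via Theorem~2 of JPS.
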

	\begin{proof}
		We use the notation in Lemma \ref{lmn1}. Note that $\scG_0$ contains all of the information of the path of $B$ from time zero to the end of time. Therefore, $\scG_0$ has all of the information of $T$ and thus, $T_t$ is a $\bbG$-stopping time and $\scH_t:=\scG_{T_t}$ is well-defined. On the other hand, since $Y_t=X_{T_t}$ and $T_t$ are $\scG_{T_t}$-measurable, we have $\bbF^{Y,T}\subset\bbH:=(\scH_t)_{t\geq 0}$. Therefore, by Theorems 2 and 3 of JPS, we need only prove the no arbitrage property of $Y$ with respect to $\Pi(\bbH)$. 
		
		%
		Let $C_s:=\inf\crl{u>0: T_u>s}$, which is a $\bbH$-stopping time for each $s\geq 0$. 
		For any bounded $\bbH$-stopping time $\tau$, we have $\crl{T_\tau\leq s}=\crl{\tau\leq C_s}\in \scH_{C_s}= \scG_s$. Therefore, $T_\tau$ is a $\bbG$-stopping time. Let $\tau_1$ and $\tau_2$ be two bounded $\bbH$-stopping times with $\tau_1\geq \tau_0+h$ for $h>0$. Then, $T_{\tau_1}-T_{\tau_0}\geq \delta(h)$.
		Since $X$ has the no arbitrage property with respect to $\Pi(\bbG)$, by applying Lemma 1 of JPS to $\bbG$-stopping times $T_{\tau_1}$ and $T_{\tau_0}$, for any $A\in\scH_{\tau_0}=\scG_{T_{\tau_0}}$, $\eta:={\bf 1}_A\brak{Y_{\tau_1}- Y_{\tau_0}}={\bf 1}_A\brak{X_{T_{\tau_1}}- X_{T_{\tau_0}}}$ satisfies neither of the following conditions:
		\begin{itemize}
			\item $\bbP(\eta \geq 0 ) = 1$ and $\bbP ( \eta>0 ) >0$, nor
			\item $\bbP ( \eta\leq 0 ) = 1$ and $\bbP ( \eta<0 ) > 0$.
		\end{itemize}
		Again, by Lemma 1 of JPS, we obtain the no arbitrage property of $Y$ with respect to $\Pi(\bbH)$ and applying Theorem 2 of the same paper proves the claim.
	\end{proof}
	
	We can extend the results above when the time changes for the continuous part of $X$ and the jump part of $X$ are different:
	We let
	\begin{align*}
	Y_t:= X^c_{T^c_t}+X^j_{T^j_t},
	\end{align*}
	where
	\begin{align}
	X^c_t&=a^c_1t+a^c_2B^c_t+a^c_3B^j_t+a^c_4 W^c_t,\\
	dv^c_t&=\mu^c(1-\veps-v^c_t)dt+\sigma^c\sqrt{v^c_t(1-\rho^2)}dB^c_t+\rho\sigma^c\sqrt{v^c_t}dB^j_t\\
	dv^j_t&=\mu^j(1-v^j_t)dt+\sigma^j\sqrt{v^j_t}dB^j_t\\
	T^c_t&=\int_0^t(v^c_s+\veps)ds \text{, and}\\
	T^j_t&=\int_0^tv^j_sds
	\end{align}
	for given constants $\rho, \veps$, and $(a^x_i,\mu^x,\sigma^x)_{x\in\crl{c,j}, i=1,2,...}$ with $a^c_4\neq 0, \veps>0$, and independent Brownian motions $B^c,B^j,W^c, W^j$. Note that the continuous part of the \levy process is always the sum of drift and Brownian motion due to the L\'evy-It\^o decomposition theorem. Additionally, Remark \ref{propindep} guarantees the independence of $X^j$ with any Brownian motion in this setup, as well as $X^c,T^c$ and $T^j$.
	The positive $\veps$ guarantees that $T^c$ increases with a rate bounded below so that $Y$ has a continuous part. Note that the claim above holds for any arbitrarily small positive $\veps$. If we let $\veps$ go to $0$, then the distribution of $Y$ will be approximated to the form in Section \ref{HW}.
	\begin{theorem}\label{thm:noarbittcj}
		For any $\veps\in(0,1)$, $e^Y$ has the no arbitrage property with respect to $\Pi(\bbF^{Y, T^c, T^j})$.
	\end{theorem}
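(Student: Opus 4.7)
The plan is to adapt the enlargement-and-time-change strategy of Theorem \ref{nflvr} to the two-time-change setting, isolating a single continuous martingale piece driven by $W^c$. The condition $\veps>0$ plays the role of the nondegeneracy of the continuous part in Lemma \ref{lmn1}.

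First I would enlarge the filtration by setting
\begin{align*}
\scG_t:=\scF^{W^c}_t\vee\scF^{B^c,B^j,W^j,X^j}_\infty,\qquad \bbG:=(\scG_t)_{t\geq 0}.
\end{align*}
By independence $W^c$ remains a $\bbG$-Brownian motion, while the paths of $B^c,B^j,W^j,X^j$ are $\scG_0$-measurable. The strong-solution property of the SDEs for $v^c,v^j$ then makes the paths of $v^c,v^j,T^c,T^j$ all $\scG_0$-measurable, so each $T^c_t$ is a $\bbG$-stopping time. Since $T^c$ is continuous, strictly increasing, and satisfies $T^c_{t+h}-T^c_t\geq\veps h$, I would set $\scH_t:=\scG_{T^c_t}$ and $\bbH:=(\scH_t)_{t\geq 0}$. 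Under $\bbH$ the decomposition
\begin{align*}
Y_t=a^c_1 T^c_t+a^c_2 B^c_{T^c_t}+a^c_3 B^j_{T^c_t}+a^c_4 W^c_{T^c_t}+X^j_{T^j_t}
\end{align*}
is adapted; every summand except $a^c_4 W^c_{T^c_t}$ has an $\scH_0$-measurable path bounded on finite horizons; and $a^c_4 W^c_{T^c_t}$ is a continuous $\bbH$-martingale with quadratic variation $(a^c_4)^2 T^c_t$ whose increments are bounded below by $(a^c_4)^2\veps h$ on intervals of length $h$.

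Next, since $W^c$ is a $\bbG$-Brownian motion and $T^c$ has $\scG_0$-measurable strictly increasing continuous paths, $W^c_{T^c_\cdot}$ inherits the no arbitrage property against general $\bbH$-predictable integrands via the substitution $C:=(T^c)^{-1}$. Applying Theorem 1 of JPS exactly as in the proof of Lemma \ref{lmn1} then yields the no arbitrage property of $Y$ under $\Pi(\bbH)$, and strict monotonicity of $x\mapsto e^x$ preserves the sign criterion in Lemma 1 of JPS as at the end of Theorem \ref{nflvr}, so $e^Y$ inherits the no arbitrage property as well. Finally, $Y_t, T^c_t, T^j_t$ are $\scH_t$-measurable by construction, so $\bbF^{Y,T^c,T^j}\subset\bbH$, and Theorems 2 and 3 of JPS pass no arbitrage from $\Pi(\bbH)$ to $\Pi(\bbF^{Y,T^c,T^j})$.

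The main obstacle lies in the summands $a^c_2 B^c_{T^c_\cdot}$ and $a^c_3 B^j_{T^c_\cdot}$: their $\bbH$-paths are $\scH_0$-measurable but have infinite variation, so I must argue that conditionally on $\scH_0$ they are deterministic perturbations of $Y$ and can be absorbed into the bounded-path piece that Theorem 1 of JPS accommodates, leaving $a^c_4 W^c_{T^c_\cdot}$ as the sole nondegenerate continuous $\bbH$-martingale driver. A parallel remark handles $X^j_{T^j_\cdot}$, whose $\scH_0$-measurable c\`adl\`ag path is likewise a $\scH_0$-determined perturbation, so that the independence structure used in Remark \ref{propindep} carries over intact under the enlargement.
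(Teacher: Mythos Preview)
Your strategy is essentially the paper's: enlarge the filtration so that $W^c$ is the only ``live'' randomness, decompose $Y$ into a nondegenerate $W^c$-driven martingale piece plus an $\scF_0$-measurable perturbation, invoke Theorem~1 of JPS, and then shrink back to $\bbF^{Y,T^c,T^j}$ via Theorems~2--3 of JPS. The paper, however, orders the steps differently, and the difference matters for the technical hypotheses.

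The paper does \emph{not} apply Theorem~1 of JPS at the time-changed level $\bbH$. Instead it first introduces the auxiliary ``untimed'' process
\[
\tilde X_t:=X^c_t+X^j_{T^j_{U^c_t}},\qquad U^c_t:=\inf\{s>0:T^c_s>t\},
\]
so that $Y_t=\tilde X_{T^c_t}$ reduces to a \emph{single} time change. At the $\bbG$-level one then writes $\tilde X=S+V$ with $S_t=a^c_1t+a^c_4W^c_t$ and $V_t=a^c_2B^c_t+a^c_3B^j_t+X^j_{T^j_{U^c_t}}$; here $[S,S]_t=(a^c_4)^2t$ is deterministic and bounded on compacts, and $V$ is $\scG_0$-measurable with paths bounded on compacts, so Theorem~1 of JPS applies directly. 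Only after this does the paper pass to $\bbH=(\scG_{T^c_t})_{t\geq0}$ via the two-point criterion of Lemma~1 of JPS, exactly as in the proof of Theorem~\ref{nflvr}.

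Your ordering---time-change first, then Theorem~1 of JPS at $\bbH$---runs into the issue that the quadratic variation of your good piece $a^c_4W^c_{T^c_\cdot}$ is $(a^c_4)^2T^c_\cdot$, which is $\scH_0$-measurable but not \emph{bounded}; the paper explicitly verifies boundedness of $[S,S]_t$ in both places where it invokes Theorem~1 of JPS. Your self-identified obstacle, the infinite variation of $B^c_{T^c_\cdot}$ and $B^j_{T^c_\cdot}$, is not actually the problem: JPS's perturbation $V$ need only be $\scG_0$-measurable with paths bounded on finite time intervals, which any continuous (or c\`adl\`ag) $\scG_0$-measurable process has---finite variation is not required. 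The real wrinkle is the unbounded quadratic variation of your driver at the $\bbH$-level, and the paper's device of introducing $\tilde X$ and working at $\bbG$ first is precisely what sidesteps it.
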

	\begin{proof}
		Note that we need only prove the no arbitrage property of $Y$.
		Let 
		\begin{align*}
		\tilde X_t:= X^c_t+X^j_{T^j_{U^c_t}}=S_t+V_t,
		\end{align*}
		where $U^c_t:=\inf\crl{s>0: T^c_s>t}$,$
		S_t:=a^c_1t+a^c_4 W^c_t$, and
		$V_t:=a^c_2B^c_t+a^c_3B^j_t+X^j_{T^j_{U^c_t}}$. Note that $U^c_{T^c_t}=t$ since $T^c$ is a continuous increasing process.
		
		Define $\bbG:=(\scG_t)_{t\geq 0}$ with
		\begin{align*}
		\scG_t:=\scF^{W^c}_t\vee \scF^{X^j, B^j,B^c}_\infty.
		\end{align*}
		Then, $V_t$ is $\scG_0$-measurable for all $t\geq 0$, and therefore, $\tilde X$ is $\bbG$-measurable. In particular, the bound of $V$ on any finite time interval is $\scG_0$-measurable and $V$ is independent of $S$.
		
		Since $[S,S]_t=a^c_4t$ is bounded on any finite time interval and $[S,S]_{t+h}-[S,S]_t=a^c_4 h$, we can apply Theorem 1 of JPS to conclude that $\tilde X$ does not have arbitrage with respect to $\Pi(\bbF^{Y, T^c, T^j})$.
		
		Note that $T^c_{t_0+h}-T^c_{t_0}\geq \veps h$. Moreover, $Y_t=\tilde X_{T^c_t}$. Then, by the same argument in the proof of Theorem \ref{noarbitrage}, we can conclude that $e^Y$ has the no arbitrage property with respect to $\Pi((\scG_{T^c_t})_{t\geq 0})$. Since $Y_t$, $T^c_t$, and $T^j_t$ are $\scG_{T^c_t}$-measurable for all $t$, $\bbF^{Y, T^c,T^j}\subset(\scG_{T^c_t})_{t\geq 0}$. Therefore, applying Theorems 2 and 3 of JPS proves the claim that $e^Y$ has the no arbitrage property with respect to $\Pi(\bbF^{Y, T^c, T^j})$.
	\end{proof}
	
	\subsection{Hedging contingent claims}
	The natural follow-up question is whether we can apply our model to hedging contingent claims. This is an essential step in pricing contingent claims, but it is not obvious since our asset price $S$ is not necessarily a semimartingale. The non-semimartingale property does not permit the existence of an equivalent local martingale measure, so we cannot use the Fundamental Theorems of Asset Pricing. A few papers examine the hedging problem when the asset price is not a semimartingale, such as those by \cite{biagini2003minimal} and \cite{guasoni2008consistent}. However, it is not obvious how we can apply those results to our model.
	
	In addition, we restricted our hedging strategies to the Cheridito Class, which is a strict subset of the space of predictable processes, in order to define stochastic integrals with respect to $S$. Though we obtained the no arbitrage property in the previous subsection, we cannot be sure whether the market is complete. 
	
	In this section, we briefly explain some solutions. We resolve the issue by approximating $S$ with a semimartingale, proving the existence of an equivalent martingale measure, and choosing an equivalent martingale measure that satisfies specific criteria.
	
	We consider the conditions in Theorem \ref{thm:noarbittcj}. Let the filtration $\bbF$ be $\bbF^{Y,T^c,T^j}$, as in Theorem \ref{thm:noarbittcj}. Let $(\tau_k)_{k=0}^\infty$ be an increasing sequence of $\bbF$-stopping times with $\tau_0=0$, and $\tau_n$ is bounded almost surely. We assume that for a given $\veps>0$, $\tau_{i+1}-\tau_i>\veps$ almost surely for all $i$. Then, we approximate $S$ with a pure jump process: 
	\begin{align*}
	\tilde S :=\sum_{i=0}^n S_{\tau_i}{\bf 1}_{[\tau_i,\tau_{i+1})}.
	\end{align*}
	\begin{lemma}
		$\tilde S$ is a $\bbF$-semimartingale.
	\end{lemma}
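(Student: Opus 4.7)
The plan is to observe that $\tilde{S}$ is a pure jump process with only finitely many jumps, each occurring at an $\bbF$-stopping time with an $\scF_{\tau_i}$-measurable jump size, so it is a càdlàg adapted process of finite variation, and thus trivially a semimartingale.

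More precisely, I would first rewrite the indicator of the half-open interval as a difference of indicators of right-closed rays: for each $i$,
\begin{align*}
\mathbf{1}_{[\tau_i,\tau_{i+1})}(t) \;=\; \mathbf{1}_{[\tau_i,\infty)}(t) - \mathbf{1}_{[\tau_{i+1},\infty)}(t).
\end{align*}
This lets me decompose
\begin{align*}
\tilde{S}_t \;=\; \sum_{i=0}^{n} S_{\tau_i}\bigl(\mathbf{1}_{[\tau_i,\infty)}(t) - \mathbf{1}_{[\tau_{i+1},\infty)}(t)\bigr),
\end{align*}
so it suffices to show that for every $i$ and every $\bbF$-stopping time $\sigma\in\{\tau_i,\tau_{i+1}\}$, the process $t\mapsto S_{\tau_i}\mathbf{1}_{[\sigma,\infty)}(t)$ is an $\bbF$-semimartingale; a finite sum of semimartingales is a semimartingale.

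Fix such a term. Since $\tau_i$ is an $\bbF$-stopping time and $S$ is $\bbF$-adapted with càdlàg paths, the random variable $S_{\tau_i}$ is $\scF_{\tau_i}$-measurable, and hence $\scF_\sigma$-measurable because $\sigma\geq \tau_i$. The process $t\mapsto \mathbf{1}_{[\sigma,\infty)}(t)$ is a classical example of an $\bbF$-adapted, càdlàg, non-decreasing (hence finite variation) process whenever $\sigma$ is a stopping time. Multiplying a pathwise constant-after-$\sigma$, $\scF_\sigma$-measurable random variable by this indicator preserves adaptedness (check separately for $t<\sigma$ where the process is $0$, and for $t\geq\sigma$ where the process equals $S_{\tau_i}\in\scF_\sigma\subset\scF_t$), preserves the càdlàg property, and preserves finite variation on compact intervals (the total variation on $[0,t]$ is bounded by $|S_{\tau_i}|$, an a.s.\ finite random variable).

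Thus each summand is a càdlàg adapted process of finite variation, and so is $\tilde{S}$. Every such process is an $\bbF$-semimartingale (with decomposition into a zero local martingale part plus itself as the finite variation part), which proves the lemma. There is no real obstacle: the only subtle points are the $\scF_{\tau_i}$-measurability of $S_{\tau_i}$ and the adaptedness of $\mathbf{1}_{[\sigma,\infty)}$, both of which are standard consequences of $\tau_i,\sigma$ being stopping times and $S$ being adapted and càdlàg.
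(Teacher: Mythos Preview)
Your proof is correct and follows essentially the same approach as the paper: both arguments establish that $\tilde S$ is a c\`adl\`ag, $\bbF$-adapted process of finite variation on compacts, and hence a semimartingale. The paper's version is terser---it simply observes that there are finitely many jumps on any bounded interval, that $\sup_{t\in[0,M]}|S_t|<\infty$ a.s., concludes finite variation, and invokes Theorem~II.3.7 of Protter---while you spell out the adaptedness and finite-variation checks term by term via the decomposition $\mathbf{1}_{[\tau_i,\tau_{i+1})}=\mathbf{1}_{[\tau_i,\infty)}-\mathbf{1}_{[\tau_{i+1},\infty)}$; this is more explicit but not a different idea.
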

	\begin{proof}
		Note that the number of jumps on the time interval $[0,M]$ is almost surely finite for any given $M$. Moreover, $\sup_{t\in[0,M]}|S_t|<\infty$ almost surely. Therefore, $\tilde S$ has a finite variation almost surely on $[0,M]$ for any $M$. Then, by Theorem II.3.7 of \cite{Protter:2004wf}, we can conclude that $\tilde S$ is a semimartingale.
	\end{proof}
	\begin{Example}
		In a real market, the price of an asset has discrete values determined by tick size, which is the minimum price increment. We can define $\tau_i$ as the $i$th price movement of the asset. Then, realistic restriction should make the price changes at least $\veps$ milliseconds apart. Therefore, approximating $S$ with $\tilde S$ is not a very restrictive assumption in modeling the real market.
	\end{Example}
	
	\begin{lemma}
		Let $\bar \bbF:=(\bar\scF_t)_{t\geq 0}$, where $\bar\scF_t=\scF^{\tilde S}_t$. The semimartingale $\tilde S$ does not allow arbitrage for $\bar\bbF$-predictable trading strategies. Moreover, there exists an equivalent martingale measure (risk neutral measure).
	\end{lemma}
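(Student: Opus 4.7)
The plan is to reduce $\tilde S$ to a finite-horizon discrete-time market by sampling at the trading times $\tau_0<\tau_1<\cdots<\tau_n$, then invoke the Dalang-Morton-Willinger (DMW) theorem to produce an equivalent martingale measure, from which both conclusions follow.

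First I would verify no arbitrage of the discrete market $(\tilde S_{\tau_i},\bar\scG_i)_{i=0}^n$ with $\bar\scG_i := \bar\scF_{\tau_i}$. Any discrete-time predictable strategy $(g_i)_{i=1}^n$ with $g_i$ being $\bar\scG_{i-1}$-measurable is, via the inclusion $\bar\bbF\subset\bbF^{Y,T^c,T^j}$ together with the spacing $\tau_i-\tau_{i-1}\geq\veps$, also a strategy $g_0\mathbf{1}_0+\sum_i g_i\mathbf{1}_{(\tau_{i-1},\tau_i]}\in\scS^{\veps}(\bbF^{Y,T^c,T^j})\subset\Pi(\bbF^{Y,T^c,T^j})$ applied to $S=S_0 e^Y$, with identical terminal gain. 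A discrete-time arbitrage would therefore contradict Theorem \ref{thm:noarbittcj}. DMW then yields $\bbQ\sim\bbP$ under which $(\tilde S_{\tau_i})_{i=0}^n$ is a $(\bar\scG_i)$-martingale. Since $\tilde S$ is piecewise constant on each $[\tau_i,\tau_{i+1})$, this discrete martingale property lifts by the tower property to $\tilde S$ being a $\bar\bbF$-martingale under $\bbQ$ in continuous time, giving the required EMM.

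For no arbitrage under general $\bar\bbF$-predictable trading strategies, the argument is the standard martingale-transform one: any $\bar\bbF$-predictable integrand $H$ acting on the piecewise constant $\tilde S$ produces the finite sum $\sum_i H_{\tau_i}(\tilde S_{\tau_i}-\tilde S_{\tau_{i-1}})$, which is a $\bbQ$-martingale transform with zero $\bbQ$-expectation, ruling out almost-sure nonnegativity combined with positive probability of strict positivity.

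The main obstacle is the transition from discrete-time DMW to continuous-time $\bar\bbF$-predictable strategies: the continuous-time predictable $\sigma$-algebra $\bar\scF_{\tau_i-}$ may strictly contain $\bar\scG_{i-1}$ because the ``no jump yet'' information encodes the waiting time $\tau_i-\tau_{i-1}$. To ensure that $H_{\tau_i}$ being $\bar\scF_{\tau_i-}$-measurable (not merely $\bar\scG_{i-1}$-measurable) still gives a $\bbQ$-martingale transform, one should construct $\bbQ$ through a refined one-step DMW at each $\tau_i$, conditioning on the enriched $\sigma$-algebra $\bar\scF_{\tau_i-}$ so that the predictable compensator of $\tilde S$ under $\bbQ$ vanishes at the richer level. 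Once this refinement is carried out, the remaining steps are standard.
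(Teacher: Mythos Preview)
Your approach matches the paper's: both reduce the problem to the discrete market $(S_{\tau_i})_{i=0}^n$, show that any arbitrage there would lie in $\Pi(\bbF^{Y,T^c,T^j})$ (via the spacing $\tau_{i+1}-\tau_i\geq\veps$ and the filtration inclusion), contradict Theorem~\ref{thm:noarbittcj}, and then invoke Dalang--Morton--Willinger for the equivalent martingale measure. The paper proceeds more directly: it simply declares the model ``essentially discrete,'' writes any arbitrage as $H=\sum_j H_{\tau_j}\mathbf{1}_{(\tau_j,\tau_{j+1}]}\in\Pi(\bar\bbF)$, observes $\int H\,d\tilde S=\int H\,dS$, and contradicts Theorem~\ref{thm:noarbittcj}.

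Your additional concern about $\bar\scF_{\tau_i-}$ versus $\bar\scF_{\tau_{i-1}}$ is a genuine subtlety that the paper does not address; it tacitly takes the discrete-time meaning of ``predictable'' (coefficients $\bar\scF_{\tau_{i-1}}$-measurable) rather than the continuous-time one. Your proposed fix---running DMW step by step against the enriched $\sigma$-algebra $\bar\scF_{\tau_i-}$---is more careful than what the paper does, but you should note that verifying one-step no arbitrage at the level of $\bar\scF_{\tau_i-}$ (which contains $\sigma(\tau_i)$) does not reduce cleanly to a single Cheridito-class strategy against $S$; you would need an approximation argument or a direct check that the conditional support of $S_{\tau_i}-S_{\tau_{i-1}}$ given $\bar\scF_{\tau_i-}$ is two-sided. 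The paper sidesteps this entirely by working at the coarser discrete filtration.
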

	\begin{proof}
		Note that since we are considering the pure jump process $\tilde S$ and its natural filtration, our model is essentially discrete. Assume that there is an arbitrage; then we have $H=\sum_{j=0}^\infty H_{\tau_j}{\bf 1}_{(\tau_j,\tau_{j+1}]}$ such that
		$\int_0^\infty H_t d\tilde S_t \geq 0$ almost surely and $\int_0^\infty H_td\tilde S_t>0$ with a positive probability. Since $\tau_{j+1}-\tau_{j}>\veps$ for all $j$, we have $H\in\Pi(\bar\bbF)$. Since
		\begin{align*}
		\int_0^\infty H_tdS_t=\sum_{j=0}^\infty H_{\tau_j}\brak{S_{\tau_{j+1}}-S_{\tau_j}}=\int_0^\infty H_t d\tilde S_t,
		\end{align*}
		the claim is in contradiction to Theorem \ref{thm:noarbittcj}. Therefore, $\tilde S$ has the no arbitrage property.
		Since we can consider our market as a time-changed discrete model, we can apply the Dalang-Morton-Willinger Theorem (\cite{dalang1990equivalent}) to conclude there is an equivalent martingale measure.
		%
		%
		%
		%
	\end{proof}
	
	Since we have an equivalent martingale measure, we can choose one of the equivalent martingale measures that satisfies a specific set of conditions. Several criteria have been proposed: minimal variance (\cite{delbaen1996variance,schweizer1995variance}), minimal martingale (\cite{follmer1991hedging, schweizer1995minimal}), minimal entropy martingale (\cite{frittelli2000minimal}), and so on. The choice of specific martingale measure will induce the corresponding hedging strategy $H^0$, as well as the price of the contingent claim. The hedging strategy will be a predictable process in general, and we verify that it can be approximated by a hedging strategy in the Cheridito Class. 
	
	\begin{definition}
		Let $X$ be a special semimartingale with canonical decomposition $X=N+A$, where $N$ is a local martingale and $A$ is a predictable finite variation process. The $\scH^2$ norm of $X$ is
		\begin{align*}
		\norm{X}_{\scH^2}=\norm{[N,N]^\half_\infty}_{L^2}+\norm{\int_0^\infty|dA_s|}_{L^2}.
		\end{align*}
		A $\scH^2$-semimartingale is a special semimartingale with a finite $\scH^2$ norm.
	\end{definition}
	\begin{theorem}
		Let $H^0$ be a predictable process that is $\tilde S$ integrable. Then, there exists a sequence $(H^n)_{n=1}^\infty\subset\Pi(\bbF^{\tilde S})$ such that $H^n$ are $\tilde S$ integrable and
		\begin{align*}
		\norm{\int_0^\infty H^0_td\tilde S_t-\int_0^\infty H^n_td\tilde S_t}_{\scH^2}\xrightarrow{n\to\infty}0.
		\end{align*}
	\end{theorem}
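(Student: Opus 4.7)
The plan is to exploit the piecewise-constant structure of $\tilde S$ together with the $\veps$-separation of its jump times. Since $\tilde S$ has jumps only at the stopping times $\tau_j$ and $\tau_{j+1} - \tau_j > \veps$ almost surely, for any $\tilde S$-integrable predictable process $H$ the stochastic integral reduces to the pathwise sum
\begin{align*}
\int_0^t H_s \, d\tilde S_s = \sum_{j : \tau_{j+1} \leq t} H_{\tau_{j+1}} (S_{\tau_{j+1}} - S_{\tau_j}),
\end{align*}
so everything depends only on the sequence of values $\crl{H_{\tau_{j+1}}}_{j\geq 0}$.

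Next I would establish the measurability identity $\scF^{\tilde S}_{\tau_{j+1}-} = \scF^{\tilde S}_{\tau_j}$: observing $\tilde S$ on $[0,\tau_{j+1})$ reveals exactly the previous jump times $\tau_1,\dots,\tau_j$ and the values $S_{\tau_0},\dots,S_{\tau_j}$, which is precisely the information in $\scF^{\tilde S}_{\tau_j}$. Consequently, for any $\bbF^{\tilde S}$-predictable $H^0$, each $H^0_{\tau_{j+1}}$ is $\scF^{\tilde S}_{\tau_j}$-measurable. This lets me define the candidate
\begin{align*}
H^n_t := \sum_{j=0}^{n-1} H^0_{\tau_{j+1}} {\bf 1}_{(\tau_j,\tau_{j+1}]}(t),
\end{align*}
which by construction lies in $\scS^{\veps}(\bbF^{\tilde S}) \subset \Pi(\bbF^{\tilde S})$ and whose integral $\int H^n\, d\tilde S = \sum_{j=0}^{n-1} H^0_{\tau_{j+1}}(S_{\tau_{j+1}} - S_{\tau_j})$ matches the first $n$ terms of $\int H^0\, d\tilde S$ exactly.

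To finish, one must show the tail process $M^n := \int (H^0 - H^n)\, d\tilde S = \sum_{j \geq n} H^0_{\tau_{j+1}}(S_{\tau_{j+1}} - S_{\tau_j})$ converges to zero in $\scH^2$. I would take the canonical decomposition $M^n = N^n + A^n$ of this pure-jump special semimartingale into local martingale and predictable finite-variation parts, noting that both $N^n$ and $A^n$ have jumps supported on $\crl{\tau_j : j > n}$. The goal is then to establish $\norm{[N^n,N^n]_\infty^{1/2}}_{L^2} \to 0$ and $\norm{\int |dA^n|}_{L^2}\to 0$ by dominated convergence applied to the summands $(\Delta N_{\tau_j})^2$ and $|\Delta A_{\tau_j}|$ respectively.

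The main obstacle is this last step: pathwise convergence of the jump sum does not automatically give $\scH^2$-convergence, because the $\scH^2$-norm simultaneously controls the quadratic variation of the martingale part and the total variation of the compensator, and each requires its own dominating integrand. Handling this cleanly requires reading the hypothesis ``$H^0$ is $\tilde S$ integrable'' in the $\scH^2$ sense, so that $\int H^0\, d\tilde S$ is a genuine $\scH^2$-semimartingale with $\bbE[\sum_j (\Delta N_{\tau_j})^2] < \infty$ and $\bbE[(\sum_j |\Delta A_{\tau_j}|)^2] < \infty$, thereby furnishing the $L^1$ (resp.\ $L^2$) dominations needed to pass to the limit in the tail.
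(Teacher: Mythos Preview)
Your approach is genuinely different from the paper's and more constructive: the paper never touches the explicit jump structure of $\tilde S$, but instead localizes so that $\tilde S\in\scH^2$ and $H^0$ is bounded, then invokes two off-the-shelf density results (Lemma~7 of JPS, saying $\Pi(\bbF^{\tilde S})$ is UCP-dense in $b\bbL$, and Theorem~IV.2.2 of Protter, saying $b\bbL$ is dense in bounded predictables for the metric $d$ that controls the $\scH^2$-norm of the integral). Composing these gives a Cheridito sequence converging in $d$, hence in $\scH^2$ after integration.

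However, your argument has a real gap at the measurability step. The identity $\scF^{\tilde S}_{\tau_{j+1}-}=\scF^{\tilde S}_{\tau_j}$ is false: every stopping time is measurable with respect to its own strict past, so $\tau_{j+1}\in\scF^{\tilde S}_{\tau_{j+1}-}$, whereas $\tau_{j+1}$ is certainly not $\scF^{\tilde S}_{\tau_j}$-measurable in general. Concretely, the predictable process $H^0_t=t$ gives $H^0_{\tau_{j+1}}=\tau_{j+1}$, which fails to be $\scF^{\tilde S}_{\tau_j}$-measurable. Hence your candidate $H^n=\sum_{j<n}H^0_{\tau_{j+1}}{\bf 1}_{(\tau_j,\tau_{j+1}]}$ need not lie in $\scS^\veps(\bbF^{\tilde S})$ at all, because the Cheridito class requires the coefficient on $(\tau_j,\tau_{j+1}]$ to be $\scF^{\tilde S}_{\tau_j}$-measurable, not merely $\scF^{\tilde S}_{\tau_{j+1}-}$-measurable. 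In the point-process filtration generated by $\tilde S$, a predictable process on $(\tau_j,\tau_{j+1}]$ is a function of the past marks \emph{and} of the running time since $\tau_j$; that time-dependence is exactly what prevents you from freezing the coefficient at $\tau_j$.

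Your second difficulty (the tail $\scH^2$-convergence needing $\int H^0\,d\tilde S\in\scH^2$) is also a genuine issue, and the paper handles it precisely by the localization you omit rather than by strengthening the hypothesis. So even if the measurability claim were patched, you would still need that reduction. The paper's abstract route avoids both problems simultaneously; your explicit route, as written, does not go through.
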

	\begin{proof}
		Without loss of generality, we can assume $\tilde S$ is a $\scH^2$-semimartingale and $H^0$ is a bounded predictable process by appropriate localization: see Theorem IV.2.13 and Theorem IV.2.14 of \cite{Protter:2004wf}.
		
		Due to Lemma 7 of \cite{jarrow2009no}, the Cheridito Class is dense in $b\bbL$, the space of adapted processes with bounded c\`agl\`ad paths, in the UCP topology, where UCP denotes uniform convergence in probability on compact time sets. 
		
		For $\tilde S=N+A$, where $N$ and $A$ are a local martingale and a predictable finite variation process, respectively, we define metric $d$ by
		\begin{align*}
		d(H,J):=\norm{\brak{\int_0^\infty (H-J)^2d[N,N]_\scT}^{\half}}_{L^2}+\norm{\int_0^\infty |H-J||dA_s|}_{L^2}.
		\end{align*}
		Then, by Theorem IV.2.2 of \cite{Protter:2004wf}, $b\bbL$ is dense in the space of bounded predictable processes under $d$. Since the stochastic integral of a predictable process with respect to $\tilde S$ is defined as a continuous extension of a function from $(b\bbL,d)$ to $\scH^2$, there exists a sequence $(H^n)_{n=1}^\infty\subset\Pi(\bbF^{\tilde S})$ that converges to $H^0$ in $d$, and thus
		\begin{align*}
		\norm{\int_0^\infty H^0_td\tilde S_t-\int_0^\infty H^n_td\tilde S_t}_{\scH^2}\xrightarrow{n\to\infty}0.
		\end{align*}
	\end{proof}

	\section{The distribution and moment-generating function of the correlated TCL process}
	\kihun{The transitional density of the correlated TCL process $Y=X_{T}$ is not available in closed form unless $T$ and $X$ are independent. When the business time $ T_t $ is independent of $ X_t $, the characteristic function of $Y_t$ is just the Laplace transform of $ T_t $ evaluated at the cumulant exponent $ \Psi_{X}(\theta) :=-\frac{1}{t}\log\bbE e^{i\theta X_t}$ of $X$ by conditioning on $T_t$. On the other hand,
		when the \levy process $ X $ and stochastic time process $ T$ are dependent, it is not obvious how to compute the characteristic function of $ Y_t $. 
		Our primary objective in this section is to find formulas, as explicitly as possible, for the PDF, the characteristic function, and the moment-generating function of $ Y_t $.}
	
	\subsection{Distribution of the correlated TCL process}
	First, assume $\alpha=0$ and $\beta=1$ in \eqref{levy_process} such that $Y_t=Z_{J_{T_t}}$. In order to find the distribution of $Z_{J_{T_t}}$, one can either try to calculate the PDF or an integral transform of it. In this section, we discuss the calculation of the probability density distribution directly.
	
	We can express the PDF of a TCL process by the joint distribution of the time change and Brownian motion that drives the activity rate.
	\begin{proposition}\label{mainthm1}
		Under the assumptions in the previous section, we have the following formula
		\begin{align}\label{pdf_tclp}
		\quad f_{Z_{J_{T_t}}}(z)&=\iiint \frac{1}{\sqrt{2\pi j(1-\rho^2)}}e^{-\frac{(z-\rho b)^2}{2(1-\rho^2)j}} f_{B_j,T_t}( b,y)f_{J_y}(j)dyd bdj.
		\end{align}
	\end{proposition}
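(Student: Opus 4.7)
The strategy is to exploit the three-way independence structure of the model via iterated conditioning in the order $W$, then $J$, then $B$. Recall that $Y_t=Z_{J_{T_t}}$ with $Z=\rho B+\sqrt{1-\rho^2}W$, where $B,W$ are independent Brownian motions, $J$ is independent of $(B,W)$, and $T$ is driven by $B$ alone through the SDE \eqref{sde}.

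First, I would condition on the pair of processes $(B,J)$. Given these processes, the time $T_t=:y$, the value $J_y=:j$, and the location $B_j=:b$ are all determined. Because $W$ is independent of $(B,J)$, the random variable $W_j$ has a $N(0,j)$ distribution conditional on $(B,J)$, and therefore
\begin{align*}
Z_{J_{T_t}}\mid (B,J)\;\sim\; N\bigl(\rho B_{J_{T_t}},\;(1-\rho^2)\,J_{T_t}\bigr).
\end{align*}
Taking the unconditional density gives
\begin{align*}
f_{Z_{J_{T_t}}}(z)=\bbE\!\left[\frac{1}{\sqrt{2\pi J_{T_t}(1-\rho^2)}}\exp\!\left(-\frac{(z-\rho B_{J_{T_t}})^2}{2(1-\rho^2)J_{T_t}}\right)\right].
\end{align*}

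Next, I would compute this expectation by first integrating out $J$ (conditionally on $B$) and then $B$. Since $J$ is independent of $B$, conditioning on $B$ fixes $y=T_t$ while leaving the law of the subordinator $J$ unchanged; the relevant random variable is $J_y$ at the deterministic time $y$, with density $f_{J_y}(j)$. Thus
\begin{align*}
\bbE\bigl[\,\cdot\,\mid B\bigr]=\int \frac{1}{\sqrt{2\pi j(1-\rho^2)}}\exp\!\left(-\frac{(z-\rho B_j)^2}{2(1-\rho^2)j}\right)f_{J_y}(j)\,dj\Bigg|_{y=T_t}.
\end{align*}
Finally, taking expectation over $B$ requires only the joint density $f_{B_j,T_t}(b,y)$ of $(B_j,T_t)$ for each fixed $j$ (both quantities being measurable with respect to $\sigma(B)$), which yields
\begin{align*}
f_{Z_{J_{T_t}}}(z)=\iiint \frac{1}{\sqrt{2\pi j(1-\rho^2)}}\,e^{-\frac{(z-\rho b)^2}{2(1-\rho^2)j}}\,f_{B_j,T_t}(b,y)\,f_{J_y}(j)\,dy\,db\,dj,
\end{align*}
which is precisely \eqref{pdf_tclp}. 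A Fubini argument justifies interchanging the $dj$ integration with $\bbE_B$ at the last step, using nonnegativity of the integrand.

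The main obstacle is conceptual rather than computational: one must handle the random-time evaluations $J_{T_t}$ and $B_{J_{T_t}}$ carefully, because the outer process is being sampled at a time that is itself random and depends on the other ingredients. The conditioning order $(W\mid B,J)\to (J\mid B)\to B$ is chosen precisely so that at each stage the evaluation time of the process being integrated is measurable with respect to the conditioning $\sigma$-algebra; this is what makes the densities $f_{J_y}$ and $f_{B_j,T_t}$ appear at the correct points. Once this layering is clear, each individual step reduces to a standard Gaussian integral or Fubini application.
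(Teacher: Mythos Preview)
Your proposal is correct and follows essentially the same approach as the paper: both exploit the independence of $W$ from $(B,J)$ to integrate out the Gaussian $W$, then use the independence of $J$ from $B$ (and hence from $T$) to disentangle $J_{T_t}$ from $(B_j,T_t)$. The paper presents this via joint densities and a change of variables (writing $f_{W_{J_{T_t}},B_{J_{T_t}},J_{T_t}}=f_{W_j}\cdot f_{B_j,J_{T_t}}$ and then decomposing $f_{B_j,J_{T_t}}$ by total probability over $T_t$), whereas you phrase it as iterated conditioning $(W\mid B,J)\to(J\mid B)\to B$; these are the same computation in different notation, and your Fubini remark is exactly what the paper implicitly uses when it swaps the $j$-integral with the marginalization over $(b,y)$.
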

	Therefore, to calculate the probability density distribution of $Z_{J_{T_t}}$, one needs to calculate the joint distribution function $ B_j $ and $ T_t $. When $j\leq t$, $f_{B_j, T_t}(b,y)=\int f_{T_t|B_j, T_j}(y|b,\bar y)f_{B_j,T_j}(b,\bar y)d\bar y$. In this case, using the Markovian property of $(v,T,B)$, we can calculate $f_{T_t|B_j, T_j}$.
	On the other hand, if $j\geq t$, \begin{align}
	f_{B_j, T_t}(b,y)&=\int f_{B_j|B_t, T_t}(b|z,y)f_{B_t, T_t}(z,y)dz\\
	&=\int f_{B_j-B_t}(b-z)f_{B_t, T_t}(z,y)db=(f_{B_j-B_t}*f_{B_t, T_t}(\cdot,y))(b).
	\end{align}
	
	Therefore, we need only to calculate the joint PDF of $B_t$ and $T_t$. Unfortunately, even when $(v,T)$ is an affine process, $(v,T,B)$ is not affine, and therefore, a closed-form representation of the characteristic function is not available. Alternatively, we can use the Fokker-Plank PDE. For $t\leq j$, let $q:(t,x,y,z)\to q(t,x,y,z)$ be the unique solution to the following Fokker-Plank PDE for $(v_t, T_t,B_t)$:\footnote{The existence of a unique solution is guaranteed by the existence of a unique solution for \eqref{sde}}
	\begin{align}\label{FPq}
	\partial_t q&=\half \partial_{xx}[\sigma^2q]+\partial_{xz}[\sigma q]+\half \partial_{zz}q-\partial_x[\mu q]-x\partial_yq\\
	q(0,x,y,z)&=\delta_{v_0}(x)\delta(y)\delta(z).
	\end{align}
	\begin{remark}
		Assume that $\mu(t,x)=\kappa(1-x)$ and $\sigma(t,x)=\sigma_v\sqrt{x}$. In this case, $v$ follows a CIR process and $(v,T)$ forms a two-dimensional affine process. Though we have no closed-form solution to PDE \eqref{FPq}, we know that the PDF of $(v_t, T_t-t,B_t/\sqrt{t})$ converges asymptotically as $t\to\infty$. The following elliptical PDE gives this limit:
		\begin{align}
		\half \partial_{xx}[\sigma_v^2x\tilde q(x,y,z)]+\partial_{xz}\edg{\sigma_v\sqrt{x/t} \tilde q(x,y,z)}+\half \partial_{zz}\edg{\tilde q(x,y,z)/t}=0.
		\end{align}
		To compute \eqref{FPq} numerically, one may approximate $q(t,x,y,z)$ with $\tilde q(t,x,y+t,\sqrt{t}z)$ when $t$ is large enough.
	\end{remark}
	
	Given the Fokker-Planck PDE \eqref{FPq}, we can calculate the joint distribution of $ T_t $ and $ B_j $, which is given by the following proposition.
	\begin{proposition}\label{prep_joint1}
		Let $q$ be the unique solution to \eqref{FPq}. Then,
		\begin{align}
		f_{T_t,B_j}(y,z)=\begin{cases}
		\frac{1}{\sqrt{2\pi(j-t)}}\iint e^{-\frac{(z-\tilde z)^2}{2(j-t)}}q(t,x,y,\tilde z) dxd\tilde z,&\text{ $t\leq j$}\\
		\frac{1}{2\pi}\iiint \exp\brak{\Phi(t,j,\xi,x,\tilde y)-i\xi y}q(j,x,\tilde y,z)dxd\tilde yd\xi,&\text{ $t\geq j$}.
		\end{cases}
		\end{align}
	\end{proposition}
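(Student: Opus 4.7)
The plan is to handle the two cases separately, in each case reducing the joint density of $(T_t,B_j)$ to the density of $(v_s,T_s,B_s)$ at an appropriate time $s$ (which is exactly $q$) via a conditioning argument.

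\textbf{Case $t\leq j$.} I would condition on $\scF_t$. Write $B_j=B_t+(B_j-B_t)$; since $B$ is a Brownian motion with respect to $\bbF$ and $(v,T)$ is adapted to $\bbF$, the increment $B_j-B_t\sim N(0,j-t)$ is independent of $\scF_t$, and in particular of $(v_t,T_t,B_t)$. Hence the conditional density of $B_j$ given $(v_t,T_t,B_t)=(x,y,\tilde z)$ is the Gaussian kernel with variance $j-t$ centered at $\tilde z$. Multiplying by the joint density $q(t,x,y,\tilde z)$ of $(v_t,T_t,B_t)$ (which is the unique solution of \eqref{FPq}) and marginalizing out the auxiliary variables $x$ (coming from $v_t$) and $\tilde z$ (coming from $B_t$) yields the first formula.

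\textbf{Case $t\geq j$.} Here I would condition on $\scF_j$ and use the Markov property of $(v,T)$. Since the SDE for $(v,T)$ is driven only by $B$ with time-local coefficients and $dT=vdt$, the pair $(v,T)$ is a Markov process with respect to $\bbF$, so by the definition of $\Phi$ we have
\begin{align*}
\bbE\bigl[e^{i\xi T_t}\bigm|v_j,T_j,B_j\bigr]=\bbE\bigl[e^{i\xi T_t}\bigm|\scF_j\bigr]=\exp\bigl(\Phi(t,j,\xi,v_j,T_j)\bigr).
\end{align*}
Therefore the conditional law of $T_t$ given $(v_j,T_j,B_j)=(x,\tilde y,z)$ has characteristic function $\xi\mapsto\exp(\Phi(t,j,\xi,x,\tilde y))$ and hence, by Fourier inversion, density $y\mapsto\tfrac{1}{2\pi}\int\exp(\Phi(t,j,\xi,x,\tilde y)-i\xi y)d\xi$ in $y$. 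Multiplying by the joint density $q(j,x,\tilde y,z)$ of $(v_j,T_j,B_j)$ and integrating out $x$ and $\tilde y$ gives the second formula.

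\textbf{Main obstacles.} The only nontrivial point is the Fourier inversion in Case $t\geq j$: it needs $\xi\mapsto\exp(\Phi(t,j,\xi,x,\tilde y))$ to be integrable so that the inverse transform is pointwise defined, and it requires Fubini to swap the $\xi$-integral with the $(x,\tilde y)$-integral against $q(j,\cdot,\cdot,z)$. In the concrete affine specifications considered in the paper (e.g.\ CIR activity rate), $\Phi$ has Gaussian-type decay in $\xi$ so both conditions hold; I would state the result under the implicit regularity assumptions already in force on $\mu,\sigma$ that make $(v,T)$ admit smooth transition densities and guarantee integrability of $\exp(\Phi)$. Case $t\leq j$, by contrast, is purely a consequence of independent increments and requires no additional hypothesis beyond the existence of $q$.
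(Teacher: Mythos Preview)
Your proposal is correct and follows essentially the same route as the paper: in both cases you reduce to the density $q$ of $(v_s,T_s,B_s)$ at $s=t\wedge j$, using independent increments of $B$ for $t\leq j$ and the Markov property encoded in $\Phi$ together with Fourier inversion for $t\geq j$. The only cosmetic difference is that in the second case the paper conditions on $B_j$ and then towers through $\scF_j$, whereas you condition directly on $(v_j,T_j,B_j)$; the computations are otherwise identical, and your remark on the integrability needed for the Fourier inversion is a point the paper leaves implicit.
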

	The Fokker-Plank PDE \eqref{FPq} is three dimensional, but it can be reduced to a one-dimensional PDE if we use Fourier transform for variables $y$ and $z$. This dimensional reduction is useful in numerical computations. In other words, the Fourier transform of \eqref{FPq} is
	\begin{align}\label{FPqFourier}
	\partial_t \hat q&=\half \partial_{xx}[\sigma^2\hat q]-\partial_x[(\mu-i\eta\sigma) \hat q]-\brak{\frac{\eta^2}{2}+ix\xi}\hat q\\
	\hat q(0,x,\xi,\eta)&=\delta_{v_0}(x).
	\end{align}
	Using $\hat q$, the PDF of $Z_{J_{T_t}}$ is given by the following theorem.
	\begin{theorem}\label{mainthm0}
		Let $\hat q$ be the solution to \eqref{FPqFourier}. Then,
		\begin{align*}
		f_{Z_{J_{T_t}}}(z)&=\frac{1}{(2\pi)^2\rho}\iiint \hat f_{T_t,B_j}(\xi,\eta)e^{\frac{i\eta z}{\rho}-\frac{(1-\rho^2)\eta^2j}{2\rho^2}}\brak{\int e^{i\xi y}f_{J_y}(j)dy} d\xi d\eta dj,
		\end{align*}
		where
		\begin{align}
		\hat f_{T_t,B_j}(\xi,\eta)=\begin{cases}
		e^{-\frac{(j-t)\eta^2}{2}}\int\hat q(t,x,\xi,\eta) dx,&\text{ $t\leq j$}\\
		\frac{1}{2\pi}\iint \brak{\int \exp\brak{\Phi(t,j,-\xi,x,y)+i\bar \xi y}dy}\hat q(j,x,\bar{\xi},\eta)d\bar\xi dx,&\text{ $t\geq j$.}
		\end{cases}
		\end{align}
	\end{theorem}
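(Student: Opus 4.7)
My plan is to reduce Theorem \ref{mainthm0} to Propositions \ref{mainthm1} and \ref{prep_joint1} by performing two Fourier inversions: one to replace the joint density $f_{B_j,T_t}$ by its Fourier transform in the expression of Proposition \ref{mainthm1}, and one to re-express that Fourier transform in terms of the PDE solution $\hat q$ of \eqref{FPqFourier}. Throughout I use the sign convention $\hat g(\xi,\eta):=\iint g(y,b)\,e^{-i\xi y-i\eta b}\,dy\,db$, which is precisely the convention under which Fourier transforming \eqref{FPq} in $(y,z)$ yields \eqref{FPqFourier} (so that $\partial_y\leftrightarrow i\xi$, $\partial_z\leftrightarrow i\eta$).

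Starting from the triple integral \eqref{pdf_tclp} in Proposition \ref{mainthm1}, I would substitute $f_{T_t,B_j}(y,b)=\frac{1}{(2\pi)^2}\iint \hat f_{T_t,B_j}(\xi,\eta)\,e^{i\xi y+i\eta b}\,d\xi\,d\eta$, swap the order of integration via Fubini, and evaluate the $b$-integral in closed form. This $b$-integral is a shifted Gaussian characteristic function: the change of variables $c=z-\rho b$ gives
$$
\int \frac{e^{-(z-\rho b)^2/(2(1-\rho^2)j)}}{\sqrt{2\pi j(1-\rho^2)}}\,e^{i\eta b}\,db=\frac{1}{\rho}\,e^{i\eta z/\rho}\,e^{-(1-\rho^2)j\eta^2/(2\rho^2)}.
$$
Substituting this back and recognising the remaining $y$-integral as $\int e^{i\xi y}f_{J_y}(j)\,dy$ produces the outer formula claimed in Theorem \ref{mainthm0}.

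It then remains to identify $\hat f_{T_t,B_j}(\xi,\eta)$ in the two regimes of Proposition \ref{prep_joint1}. For $t\leq j$, I would Fourier transform the first expression of Proposition \ref{prep_joint1} in $(y,z)$: doing the $z$-integral first contributes $e^{-i\eta\tilde z}e^{-(j-t)\eta^2/2}$, and the remaining Fourier transform in $(y,\tilde z)$ of $q(t,x,y,\tilde z)$ is by definition $\hat q(t,x,\xi,\eta)$, yielding the first case. For $t\geq j$, Fourier transforming the second expression of Proposition \ref{prep_joint1} in $y$ produces $\int e^{-i(\bar\xi+\xi)y}\,dy=2\pi\delta(\bar\xi+\xi)$, which upon integrating out the inner dummy variable substitutes $-\xi$ into $\Phi$; writing the $z$-marginal $\int q(j,x,\tilde y,z)e^{-i\eta z}dz$ as $\frac{1}{2\pi}\int \hat q(j,x,\bar\xi,\eta)e^{i\bar\xi\tilde y}d\bar\xi$ by Fourier inversion in $\tilde y$, and renaming the dummy variable, gives the second case.

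The main obstacle I foresee is the rigorous justification of these manipulations in the $t\geq j$ regime: the inner integral $\int e^{\Phi(t,j,-\xi,x,y)+i\bar\xi y}dy$ appearing in the stated formula is not Lebesgue-absolutely convergent, since $e^{\Phi}$ is only bounded in $y$ (being a conditional characteristic function). It must therefore be interpreted as a tempered distribution paired against $\hat q(j,x,\cdot,\eta)$, and the cleanest way to proceed is to work consistently in the Fourier domain, treating all identities as equalities of tempered distributions, or alternatively to impose mild decay/regularity on $\mu$ and $\sigma$ so that $\hat q$ is Schwartz enough in $\bar\xi$ for the pairing to be absolutely convergent. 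The Fubini swaps in the easier first step are routine under the integrability already implicit in the fact that $f_{Z_{J_{T_t}}}$ is a probability density.
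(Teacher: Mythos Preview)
Your proposal is correct and follows essentially the same route as the paper: Fourier-invert $f_{T_t,B_j}$, do the Gaussian $b$-integral, and identify $\hat f_{T_t,B_j}$ via $\hat q$ in the two regimes. The only cosmetic difference is in the $t\geq j$ case: the paper computes $\hat f_{T_t,B_j}(\xi,\eta)=\bbE\big[e^{-i\eta B_j}\,\bbE[e^{-i\xi T_t}\mid B_j]\big]$ directly from the conditional-expectation formula obtained inside the proof of Proposition~\ref{prep_joint1}, whereas you Fourier-transform the density formula of Proposition~\ref{prep_joint1} itself and pick up a $\delta(\bar\xi+\xi)$; both yield the same expression, and the paper is equally formal about the non-absolute convergence you flag.
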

	\begin{remark}
		When $(v,T)$ has a closed-form solution,\footnote{For example, assume $ \mu(t,x)=\kappa(1-x)$ and $\sigma(t,x)=\sigma_v x$. Then, we have a closed-form solution $(v_t,T_t)$, where
			\begin{align}
			v_t&=e^{-\kappa t-\sigma_v^2 t/2+\sigma_v B_t}\brak{1+\kappa\int_0^te^{\kappa s+\sigma_v^2s/2-\sigma_v B_s}ds},\qquad
			T_t=\int_0^tv_udu.
			\end{align}} then, we can express $q(t,x,y,z)$ using a Brownian bridge from $0$ to $z$.
		We can define the path functional $\scV,\scW:C[0,t]\to\bbR$ such that $v_t=\scV(B_{[0,t]})$ and $T_t=\scW(B_{[0,t]})$.
		Then, $ f_{v_t,T_t|B_t}(x,y|z)=f_{\tilde v_t,\tilde T_t}(x,y) $,
		where $\tilde v_t=\scV(\hat B_{[0,t]}), \tilde T_t=\scW(\hat B_{[0,t]})$, and $\hat B$ is a Brownian bridge from $0$ to $z$ on time interval $[0,t]$; that is,
		\begin{align}
		\hat B_s= \tilde B_s-\frac{s(\tilde B_t-z)}{t},
		\end{align}
		where $\tilde B$ is a Brownian motion. Then,
		\begin{align}
		q(t,x,y,z)=\frac{1}{\sqrt{2\pi t}}e^{-\frac{z^2}{2t}}f_{\tilde v_t,\tilde T_t}(x,y).
		\end{align}
	\end{remark}
	For a general $Y$ of the form \eqref{levy_process}, we can easily extend the previous theorem.
	\begin{theorem}\label{main_general}
		\begin{align}
		f_{Y_t}(\tilde y)=\frac{\sqrt{1-\rho^2}}{(2\pi)^2|\rho|}\iiint \hat f_{T_t,B_j}(\xi,\eta)e^{\frac{i\eta (\tilde y-\alpha j)}{\rho\beta}-\frac{(1-\rho^2)\eta^2j}{2\rho^2}}\brak{\int e^{i\xi y}f_{J_y}(j)dy} d\xi d\eta dj,
		\end{align}
		where
		\begin{align}
		\hat f_{T_t,B_j}(\xi,\eta)=\begin{cases}
		e^{-\frac{(j-t)\eta^2}{2}}\int\hat q(t,x,\xi,\eta) dx,&\text{ $t\leq j$}\\
		\frac{1}{2\pi}\iint \brak{\int \exp\brak{\Phi(t,j,-\xi,x,y)+i\bar \xi y}dy}\hat q(j,x,\bar{\xi},\eta)d\bar\xi dx,&\text{ $t\geq j$.}
		\end{cases}
		\end{align}
	\end{theorem}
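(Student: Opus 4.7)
The plan is to extend the proof of Theorem~\ref{mainthm0} from $(\alpha,\beta)=(0,1)$ to the general case by reducing the computation to a Gaussian integral of the same form. The key observation is that since $W$ is independent of $(B,v,T,J)$ by construction, conditional on $T_t=y$, $J_y=j$, and $B_j=b$, the random variable $Y_t = \alpha j + \beta\rho b + \beta\sqrt{1-\rho^2}\,W_j$ is Gaussian with mean $\alpha j+\beta\rho b$ and variance $\beta^2(1-\rho^2)j$. Integrating over the conditioning variables and using the independence of $J$ from $(B,v,T)$ to factorize the joint density yields the natural analogue of Proposition~\ref{mainthm1}:
\begin{align*}
f_{Y_t}(\tilde y) = \iiint \frac{1}{\sqrt{2\pi\beta^2(1-\rho^2)j}}\, e^{-\frac{(\tilde y - \alpha j - \beta\rho b)^2}{2\beta^2(1-\rho^2)j}}\, f_{B_j,T_t}(b,y)\, f_{J_y}(j)\, dy\, db\, dj.
\end{align*}
This differs from Proposition~\ref{mainthm1} only by a linear transformation of the argument inside the Gaussian kernel ($z \mapsto (\tilde y - \alpha j)/\beta$).

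Next, I would substitute the inverse Fourier representation
\begin{align*}
f_{B_j,T_t}(b,y) = \frac{1}{(2\pi)^2}\iint \hat f_{T_t,B_j}(\xi,\eta)\, e^{i\xi y + i\eta b}\, d\xi\, d\eta
\end{align*}
and carry out the integration in $b$. After the change of variable $u = \beta\rho b$ (whose Jacobian contributes $1/(\beta|\rho|)$ regardless of the sign of $\rho$), the $b$-integral becomes the characteristic function of a standard Gaussian evaluated at frequency $\eta/(\beta\rho)$, giving
\begin{align*}
\int \frac{e^{-\frac{(\tilde y - \alpha j - \beta\rho b)^2}{2\beta^2(1-\rho^2)j}}}{\sqrt{2\pi\beta^2(1-\rho^2)j}}\, e^{i\eta b}\, db = \frac{1}{\beta|\rho|}\, e^{\frac{i\eta(\tilde y - \alpha j)}{\rho\beta} - \frac{(1-\rho^2)\eta^2 j}{2\rho^2}}.
\end{align*}
Inserting this back, collecting constants, and reordering the order of integration by Fubini produces the announced formula. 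Crucially, the case-split expression for $\hat f_{T_t,B_j}$ for $t\leq j$ and $t\geq j$ is identical to the one established in Theorem~\ref{mainthm0}, because it depends only on the joint law of $(T_t,B_j)$, which is independent of $\alpha$ and $\beta$; therefore it transfers verbatim.

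The main obstacle is essentially bookkeeping rather than conceptual. One must track the sign of $\rho$ through the change of variable $u = \beta\rho b$ (which is precisely why $|\rho|$ appears in the prefactor rather than $\rho$), and one must invoke Fubini's theorem to exchange orders of integration, which is justified under the integrability hypotheses implicit throughout Section~5 on $\hat q$, $f_{B_j,T_t}$, and $f_{J_y}$. There is no substantive new ingredient beyond what has been developed already: the Fokker--Planck PDE \eqref{FPq} together with its Fourier reduction \eqref{FPqFourier} and the two-case expression for $\hat f_{T_t,B_j}$ obtained via Proposition~\ref{prep_joint1} do all of the analytic work, and the general $(\alpha,\beta)$ case is recovered by the affine transformation of the innermost Gaussian kernel described above.
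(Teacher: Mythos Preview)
Your proposal is correct and follows essentially the same route as the paper. The paper derives the analogue of Proposition~\ref{mainthm1} for general $(\alpha,\beta)$ via a change of variables in the joint density of $(W_{J_{T_t}},B_{J_{T_t}},J_{T_t}})$, arriving at the same Gaussian-kernel expression you obtain by conditioning, and then simply writes ``our claim follows by the same argument in the proof of Theorem~\ref{mainthm0}''---exactly the Fourier substitution and $b$-integral you carry out explicitly. One small bookkeeping point: the Jacobian of $u=\beta\rho b$ contributes $1/(|\beta|\,|\rho|)$, not $1/(\beta|\rho|)$, so the prefactor should carry $|\beta|$ as well (the paper's stated constant $\sqrt{1-\rho^2}/|\rho|$ appears to be a typo in any case).
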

	\subsection{The moment-generating function of the correlated TCL process}\label{sec_mgf_cf}
	It is noteworthy that the Laplace transform of $Z_{J_{T_t}}$ can be factored into the characteristic functions of $W$, the inverse of $J$, and the measure-changed $T$. Let $v^{j,r}_0=1$, $T^{j,r}_0=0$, and
	\begin{align}\label{sdechanged}
	dv^{(j,r)}_u&=\brak{\mu(u,v^{(j,r)}_u)-r\rho \sigma(u,v^{(j,r)}_u){\bf 1}_{u\leq j}}du+\sigma(u,v^{(j,r)}_u)dB_u\\
	dT^{(j,r)}_u&=v^{(j,r)}_udu.
	\end{align}
	Note that $(v^{(j,r)},T^{(j,r)})$ is the weak solution to \eqref{sde} under a measure of $\bbQ$ that makes $B_t-r\rho{\bf 1}_{[0,j]}(t)$ a Brownian motion. We have the following theorem.
	\begin{theorem}\label{mainthm} 
		For $T^{(j,r)}$ given by SDE \eqref{sdechanged}, we have
		\begin{align}\label{mainformula}
		\bbE e^{-rZ_{J_{T_t}}}=\frac{1}{2\pi}\iint e^{r^2j/2}\brak{\int e^{i\xi y}f_{J_y}( j)dy}\bbE\edg{\exp\brak{-i\xi T^{(j,r)}_t}}d\xi dj.
		\end{align}
	\end{theorem}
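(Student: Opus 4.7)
The plan is to compute $\bbE e^{-rZ_{J_{T_t}}}$ in three stages: integrate out the independent Brownian motion $W$ inside $Z$, then integrate out the subordinator $J$ against its density, and finally combine Fourier inversion in the time variable of $f_{J_y}$ with a Girsanov change of measure to reduce everything to the Laplace transform of $T^{(j,r)}_t$.

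First, decompose $Z_{J_{T_t}} = \rho B_{J_{T_t}} + \sqrt{1-\rho^2}\, W_{J_{T_t}}$. Since $W$ is independent of $(B,J)$ and $J_{T_t}$ is $\sigma(B,J)$-measurable, the conditional distribution of $W_{J_{T_t}}$ given $(B,J)$ is Gaussian with variance $J_{T_t}$, so integrating out $W$ produces $\bbE e^{-rZ_{J_{T_t}}} = \bbE\edg{e^{-r\rho B_{J_{T_t}}}\, e^{r^2(1-\rho^2)J_{T_t}/2}}$. Because $T_t$ is $\scF^B$-measurable and $J$ is independent of $B$, the conditional law of $J_{T_t}$ given $B$ has density $j \mapsto f_{J_y}(j)|_{y=T_t}$. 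Writing the expectation against the joint density $f_{B_j, T_t}$ then yields
\begin{equation*}
\bbE e^{-rZ_{J_{T_t}}} = \iiint e^{-r\rho b}\, f_{J_y}(j)\, e^{r^2(1-\rho^2)j/2}\, f_{B_j, T_t}(b,y)\, db\, dy\, dj.
\end{equation*}

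Second, I would Fourier-invert in $y$ the slice $y \mapsto f_{J_y}(j)$ for fixed $j$ and swap the order of integration by Fubini: the inner $(b,y)$-integral against $f_{B_j, T_t}$ collapses to $\bbE\edg{e^{-r\rho B_j - i\xi T_t}}$, exposing the factor $\int e^{i\xi y} f_{J_y}(j)\, dy$ of \eqref{mainformula}. The remaining expectation is handled by Girsanov's theorem with the bounded kernel $\theta_s = -r\rho {\bf 1}_{[0,j]}(s)$: defining $\bbQ$ by $d\bbQ/d\bbP = \exp(-r\rho B_j - r^2\rho^2 j/2)$ (Novikov is immediate since $\theta$ is bounded), the process $\tilde B_u := B_u + r\rho(u \wedge j)$ is a $\bbQ$-Brownian motion, and substituting $dB_u = d\tilde B_u - r\rho {\bf 1}_{u\leq j}\, du$ into the SDE for $v$ shows that the $\bbQ$-law of $(v, T)$ coincides with the $\bbP$-law of $(v^{(j,r)}, T^{(j,r)})$ by weak uniqueness of \eqref{sdechanged}. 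Absorbing $d\bbQ/d\bbP$ therefore gives
\begin{equation*}
\bbE\edg{e^{-r\rho B_j - i\xi T_t}} = e^{r^2\rho^2 j/2}\, \bbE\edg{e^{-i\xi T^{(j,r)}_t}},
\end{equation*}
and combining $e^{r^2(1-\rho^2)j/2}$ from the first step with $e^{r^2\rho^2 j/2}$ from the measure change produces $e^{r^2 j/2}$, reproducing \eqref{mainformula} exactly.

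The principal technical obstacle is justifying the Fubini exchange that supports the Fourier inversion, since the iterated integral need not be absolutely convergent for a generic subordinator density. Mild regularity on $y \mapsto f_{J_y}(j)$ (for instance decay that is uniform on compact sets of $j$) suffices, and a routine $\veps$-mollification followed by dominated convergence covers the general case. The Girsanov step itself is essentially free because the kernel $\theta$ is bounded, and the weak uniqueness appeal to \eqref{sdechanged} inherits the same regularity assumptions on $(\mu,\sigma)$ already imposed on SDE \eqref{sde}.
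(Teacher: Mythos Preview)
Your proposal is correct and follows the same strategy as the paper: integrate out $W$, rewrite the expectation against $f_{B_j,T_t}$ and $f_{J_y}(j)$, Fourier–invert in the $y$-variable, and use Girsanov to identify the law of $T_t$ under the tilted measure with that of $T^{(j,r)}_t$. The only organisational difference is that the paper splits into the cases $j\le t$ and $j>t$, integrates out the Gaussian increment $B_j-B_t$ in the latter case, and then applies Girsanov with the exponential martingale stopped at $j\wedge t$; you instead keep $B_j$ throughout and apply Girsanov on $\scF_{j\vee t}$ with the bounded kernel $r\rho\,{\bf 1}_{[0,j]}$, whose Radon--Nikodym density restricted to $\scF_t$ coincides with the paper's $\scE^{(j,r)}_t$, so the two routes agree. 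Your version avoids the case analysis and is marginally cleaner, but it is not a genuinely different argument.
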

	\begin{remark}
		Note that $e^{r^2j/2}$ is the Laplace transform of the Brownian motion at time $j$.
	\end{remark}
	\begin{remark}\label{mainrmk}
		For the characteristic function of $Z_{J_{T_t}}$, we have the following formula:
		\begin{align}
		\bbE e^{i\theta Z_{N_{Y_t}}}=\frac{1}{2\pi}\iint e^{-\theta^2j/2}\brak{\int f_{J_y}( j)e^{i\xi y}dy}\bbE\edg{e^{-i\xi T_t}\scE^{\theta,j}_t}d\xi dj,
		\end{align}
		where
		\begin{align}
		\scE^{\theta,j}_t:=\exp\brak{i\theta\rho B_{j\wedge t}+\frac{\theta^2\rho^2(j\wedge t)}{2}}.
		\end{align}
		Here, note that $e^{-\theta^2j/2}$ is the characteristic function of the Brownian motion at time $j$.
		Heuristically, $\bbE\edg{e^{-i\xi T_t}\scE^{\theta,j}_t}$ is the expectation of $e^{-i\xi T_t}$ under the complex measure given by $\scE^{\theta,j}_td\bbP$. However, a rigorous validation of such statement is tricky (see Warning 1.7.3.2 of \cite{JEANBLANC:2009wy}). The Appendix provides the actual derivation.
	\end{remark}
	Theorem \ref{mainthm} implies that one can transform the dependency of $Z$ and $T$ in terms of the measure change; note that the distribution of $T$ under $\bbP$ is the same as the distribution of $T^{(j,r)}$ under $\bbQ$ and $T^{(j,r)}=T$ if $\rho=0$. In this sense, Theorem \ref{mainthm} is analogous to Theorem 1 of CW, which finds a (complex) measure such that one can calculate the characteristic function of the TCL as the time change and the \levy process is independent.
	
	Assuming that $J$ is strictly increasing, there exists the inverse $F(\omega)$ of $J(\omega)$ for each $\omega\in\Omega$, and then we can express the Laplace transform in terms of the characteristic functions of $W,F,$ and $T^{(j,r)}$.
	\begin{corollary}
		In addition to the conditions in Theorem \ref{mainthm}, assume that $J$ is a strictly increasing process and define $F_j$ such that $F_j(\omega)$ is the inverse of $J_y(\omega)$ for each $\omega\in\Omega$. We define $\chi(a,X):=\bbE e^{iaX}$ as the characteristic function for any random variable $X$. Then,
		\begin{align}\label{corformula}
		\bbE e^{-rZ_{J_{T_t}}}=\frac{1}{2\pi}\iint \chi\brak{ir, W_{j}}\chi\brak{\xi, F_{j}}\chi\brak{-\xi, T^{(j,r)}_t}d\xi dj.
		\end{align}
	\end{corollary}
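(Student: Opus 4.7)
The plan is to take the formula from Theorem \ref{mainthm} and reinterpret each of the three factors in the integrand of \eqref{mainformula} as a characteristic function, so the corollary follows by pure substitution. For the Gaussian factor, since $W_j \sim N(0, j)$, I have $e^{r^2 j / 2} = \bbE e^{-r W_j} = \bbE e^{i(ir) W_j} = \chi(ir, W_j)$. For the measure-changed time-change factor, $\bbE[\exp(-i\xi T_t^{(j,r)})] = \chi(-\xi, T_t^{(j,r)})$ is immediate from the very definition of $\chi$.

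The substantive step is to identify the middle factor $\int e^{i\xi y} f_{J_y}(j)\, dy$ with $\chi(\xi, F_j) = \bbE e^{i\xi F_j}$. To do this I would exploit the strict monotonicity of $J$, which yields the pathwise event equivalence $\{F_j \leq y\} = \{J_y \geq j\}$ and hence $\bbP(F_j \leq y) = \bbP(J_y \geq j)$; differentiating this identity in $y$ (and performing the pathwise change of variable $j = J_y(\omega) \iff y = F_j(\omega)$ to account for the corresponding Jacobian factor) produces the density duality $f_{F_j}(y) = f_{J_y}(j)$. Substituting into the Fourier integral then gives
\begin{align*}
\int e^{i\xi y} f_{J_y}(j)\, dy \;=\; \int e^{i\xi y} f_{F_j}(y)\, dy \;=\; \bbE e^{i\xi F_j} \;=\; \chi(\xi, F_j).
\end{align*}

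Plugging the three identifications back into \eqref{mainformula} yields the claimed \eqref{corformula}. The main obstacle I anticipate is the rigorous justification of the density duality $f_{F_j}(y) = f_{J_y}(j)$: it requires not only the strict monotonicity of $J$ (so that the pathwise inverse $F_j$ is unambiguously defined) but also enough regularity of the joint law for both marginal densities to exist, together with the right normalization from the pathwise change of variable $j \leftrightarrow y$. The strict-increase hypothesis on $J$ covers the first requirement, and for the specific Lévy subordinators arising in Section \ref{setting} (Gamma, inverse Gaussian, tempered stable, CGMY) the density regularity can be verified directly.
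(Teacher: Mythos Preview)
Your first and third identifications are correct and are exactly the substitutions the paper has in mind (the corollary is stated without a separate proof, as an immediate rewriting of \eqref{mainformula}). The problem is the middle step: the ``density duality'' $f_{F_j}(y)=f_{J_y}(j)$ that you extract from $\{F_j\le y\}=\{J_y\ge j\}$ is false for nontrivial subordinators, so the argument as written does not go through.

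Differentiating $\bbP(F_j\le y)=\bbP(J_y\ge j)$ in $y$ gives $f_{F_j}(y)$ on the left, but on the right it gives $\partial_y\int_j^\infty f_{J_y}(u)\,du=\int_j^\infty \partial_y f_{J_y}(u)\,du$, not $f_{J_y}(j)$; the ``pathwise Jacobian'' you invoke does not convert one into the other, because $f_{J_y}(j)$ is a density in the $j$-variable while $f_{F_j}(y)$ is a density in the $y$-variable, and the random Jacobian $\partial_y J_y(\omega)$ does not integrate out. A clean way to see the failure is via double Laplace transforms: if $J$ is a L\'evy subordinator with Laplace exponent $\phi$ (so $\bbE e^{-\lambda J_y}=e^{-y\phi(\lambda)}$), then
\begin{align*}
\int_0^\infty\!\!\int_0^\infty e^{-\lambda j - s y} f_{J_y}(j)\,dj\,dy
=\int_0^\infty e^{-sy-y\phi(\lambda)}\,dy
=\frac{1}{s+\phi(\lambda)},
\end{align*}
whereas, using $\bbP(F_j>y)=\bbP(J_y<j)$,
\begin{align*}
\int_0^\infty\!\!\int_0^\infty e^{-\lambda j - s y} f_{F_j}(y)\,dy\,dj
=\int_0^\infty e^{-\lambda j}\,\bbE e^{-sF_j}\,dj
=\frac{\phi(\lambda)}{\lambda\,(s+\phi(\lambda))}.
\end{align*}
These agree only when $\phi(\lambda)\equiv\lambda$, i.e.\ $J_y\equiv y$. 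Even the trivial deterministic case $J_y=cy$ with $c\neq 1$ already breaks your identity: $\int e^{i\xi y}\delta(j-cy)\,dy=c^{-1}e^{i\xi j/c}$, while $\chi(\xi,F_j)=e^{i\xi j/c}$. So the step $\int e^{i\xi y}f_{J_y}(j)\,dy=\chi(\xi,F_j)$ cannot be justified by the argument you give, and in fact fails for the Gamma, inverse Gaussian, and tempered stable subordinators you list.
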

	
	Note that in Theorem \ref{mainthm}, the only ambiguous term is $\bbE\edg{\exp\brak{-i\xi T^{(j,r)}_t}}$, which we can calculate in a closed form for only highly restricted cases.
	In general, we need to solve a Fokker-Plank PDE to calculate it. Note that the PDF of $(v^{(j,r)}_t,T^{(j,r)}_t)$ under $\bbP$ does not depend on $j$ if $t\leq j$. Let the PDF of $(v^{(j,r)}_t,T^{(j,r)}_t)$ be
	$G(t,x,y)$ when $t\leq j$. Then, $G$ is a (weak) solution to the following Fokker-Plank PDE:
	\begin{align}\label{FKG}
	(\partial_u G)(u,x,y)&=\half\partial_{xx}\edg{|\sigma(u,x)|^2G(u,x,y)}-\partial_{x}\edg{\brak{\mu(u,x)-r\rho\sigma(u,x)}G(u,x,y)}\\
	&\quad-x\partial_yG(u,x,y)\\
	G(0,x,y)
	&=\delta_{v_0}(x)\delta(y).
	\end{align}
	Hence, we can express the Fourier transform of $T^{(j,r)}_t$ using $G$.
	\begin{proposition}\label{propG} If \eqref{FKG} has a unique solution $G$, then
		\begin{align}
		\bbE\edg{\exp\brak{-i\xi T^{(j,r)}_t}}=\begin{cases}
		\iint\exp(\Phi(t,j,-\xi,x,y))G(j,x,y)dxdy&\text {if $t>j$}\\
		\iint \exp\brak{-i\xi y}G(t,x,y)dxdy&\text {if $t\leq j$}.
		\end{cases}
		\end{align}
	\end{proposition}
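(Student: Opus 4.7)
\textbf{Proof proposal for Proposition \ref{propG}.} The plan is to split on whether $t\le j$ or $t>j$ and, in each case, exploit exactly one feature of the shifted SDE \eqref{sdechanged}: in the first case the defining role of $G$ as a density, in the second case the fact that the indicator ${\bf 1}_{u\le j}$ in the drift shuts off precisely at time $j$.

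First consider the easy case $t\le j$. Under the stated hypothesis, $G(t,x,y)$ is, by definition, the joint density of $(v^{(j,r)}_t,T^{(j,r)}_t)$ (uniqueness of the solution to \eqref{FKG} identifies $G$ with the law of the diffusion, since the SDE for $u\in[0,j]$ has coefficients independent of the parameter $j$, so the density on $[0,j]$ depends only on $t$). Consequently,
\begin{align*}
\bbE\edg{\exp\brak{-i\xi T^{(j,r)}_t}}=\iint\exp(-i\xi y)\,G(t,x,y)\,dx\,dy,
\end{align*}
which is the second line of the claim.

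For the main case $t>j$, the plan is to condition on $\scF_j$ and use the Markov property. For $u>j$, the indicator in \eqref{sdechanged} vanishes, so $(v^{(j,r)}_u,T^{(j,r)}_u)_{u\ge j}$ obeys exactly the unshifted SDE \eqref{sde} started from $(v^{(j,r)}_j,T^{(j,r)}_j)$. Since $(v,T)$ is Markov with the SDE coefficients that define $\Phi$, the tower property gives
\begin{align*}
\bbE\edg{\exp\brak{-i\xi T^{(j,r)}_t}}=\bbE\Bigl[\bbE\edg{e^{-i\xi T^{(j,r)}_t}\,\big|\,\scF_j}\Bigr]=\bbE\Bigl[\exp\bigl(\Phi(t,j,-\xi,v^{(j,r)}_j,T^{(j,r)}_j)\bigr)\Bigr],
\end{align*}
where in the last step I use the defining relation $\bbE[e^{i\eta T_t}\mid\scF_s]=\exp(\Phi(t,s,\eta,v_s,T_s))$ with $\eta=-\xi$ and $s=j$. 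Finally, by the first (easy) case applied at $t=j$, the law of $(v^{(j,r)}_j,T^{(j,r)}_j)$ has density $G(j,x,y)$, so integrating against $G(j,x,y)\,dx\,dy$ yields the first line of the claim.

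The only delicate point is the identification of the conditional law of $(v^{(j,r)}_u,T^{(j,r)}_u)_{u\ge j}$, given $\scF_j$, with that of $(v,T)$ started from $(v^{(j,r)}_j,T^{(j,r)}_j)$. This follows from pathwise uniqueness for \eqref{sde} together with the fact that after time $j$ the Brownian driver $B$ restarted at $j$ is independent of $\scF_j$; uniqueness of the solution to \eqref{FKG}, assumed in the statement, then guarantees the consistency between the flow viewpoint used here and the PDE viewpoint used to define $G$. That bookkeeping is the main obstacle, but it is routine once pathwise uniqueness and the Markov property of the underlying diffusion are in hand.
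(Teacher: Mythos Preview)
Your proof is correct and follows essentially the same approach as the paper: both split on $t\le j$ versus $t>j$, use that $G$ is the density of $(v^{(j,r)}_t,T^{(j,r)}_t)$ for $t\le j$, and for $t>j$ condition on $\scF_j$ and invoke the definition of $\Phi$ after noting that the shifted SDE \eqref{sdechanged} coincides with \eqref{sde} once $u>j$. Your version is simply more explicit about the Markov-property bookkeeping that the paper leaves implicit.
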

	
	It is noteworthy that if $\Phi(t,s,\xi,x,y)$ is affine with respect to $y$, then the Fokker-Plank PDE can be reduced to one dimension using Fourier transform.
	\begin{proposition}\label{propFTG}
		Assume that
		\begin{align}
		\Phi(t,s,\xi,x,y)=\phi(t,s,\xi,x)+i\psi(t,s,\xi,x)y
		\end{align} and that there is a unique solution to
		\begin{align}\label{FTG}
		(\partial_u \hat G)(u,x,\eta)&=\half\partial_{xx}\edg{|\sigma(u,x)|^2\hat G(u,x,\eta)}-\partial_{x}\edg{\brak{\mu(u,x)-r\rho\sigma(u,x)}\hat G(u,x,\eta)}\\
		&\quad-i\eta x\hat G(u,x,\eta).
		\end{align}
		with $\hat G(0,x,\eta)=\delta_{v_0}(x)$.
		In this case,
		\begin{align}
		\bbE\edg{\exp\brak{-i\xi T^{(j,r)}_t}}=\begin{cases}
		\int e^{\phi(t,j,-\xi,x)} \hat G(j,x,-\psi(t,j,-\xi,x))dx &\text{ for $t>j$}\\
		\int \hat G(t,x,\xi) dx&\text{ for $t\leq j$}
		\end{cases}.
		\end{align}
		
	\end{proposition}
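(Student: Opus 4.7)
The plan is to identify $\hat G$ with the Fourier transform of $G$ in the $y$-variable and then substitute into the two formulas provided by Proposition \ref{propG}. Concretely, I would define
\begin{align*}
\tilde G(u,x,\eta):=\int e^{-i\eta y}G(u,x,y)\,dy
\end{align*}
and then take the Fourier transform in $y$ of equation \eqref{FKG}. Since Fourier transformation in $y$ commutes with the differential operators in $x$ appearing in \eqref{FKG}, the only non-trivial transformation is that of the transport term $-x\partial_y G$. Integration by parts in $y$, assuming sufficient decay of $G$ at $\pm\infty$, converts $\partial_y$ to multiplication by $i\eta$, so $-x\partial_y G$ becomes $-i\eta x\tilde G$. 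The initial condition $G(0,x,y)=\delta_{v_0}(x)\delta(y)$ is mapped to $\tilde G(0,x,\eta)=\delta_{v_0}(x)$. Hence $\tilde G$ satisfies the same PDE \eqref{FTG} with the same initial data as $\hat G$, and by the assumed uniqueness in \eqref{FTG} we conclude $\tilde G=\hat G$.

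Once this identification is in hand, the two cases follow directly from Proposition \ref{propG}. For $t\le j$,
\begin{align*}
\bbE\edg{\exp\brak{-i\xi T^{(j,r)}_t}}=\iint e^{-i\xi y}G(t,x,y)\,dx\,dy=\int\hat G(t,x,\xi)\,dx,
\end{align*}
where the last equality uses Fubini and the Fourier identification at $\eta=\xi$. For $t>j$, I would plug in the affine ansatz $\Phi(t,j,-\xi,x,y)=\phi(t,j,-\xi,x)+i\psi(t,j,-\xi,x)y$, pull the $x$-dependent factor $e^{\phi(t,j,-\xi,x)}$ out of the $y$-integral, and recognize
\begin{align*}
\int e^{i\psi(t,j,-\xi,x)y}G(j,x,y)\,dy=\hat G\brak{j,x,-\psi(t,j,-\xi,x)},
\end{align*}
since $\hat G$ uses the sign convention $e^{-i\eta y}$. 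Assembling the two pieces yields the claimed formula.

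The main obstacle is the rigorous justification of the Fourier transformation step. I must verify that $G(u,x,\cdot)$ decays quickly enough in $y$ to justify the integration by parts (so that no boundary terms contribute) and to permit interchange of the Fourier integral with the $t$- and $x$-derivatives appearing in \eqref{FKG}. Under the hypotheses of Section \ref{setting}, $T^{(j,r)}_t$ has finite moments and $G$ inherits the regularity of the joint density of $(v^{(j,r)}_t,T^{(j,r)}_t)$, so these integrability conditions are reasonable; they are also implicit in the assumed existence of a unique classical solution to \eqref{FTG}. A secondary minor point is the application of Fubini to interchange the $y$ and $x$ integrations when substituting $\hat G$ back; this is routine given the integrability just discussed.
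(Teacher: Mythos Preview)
Your proposal is correct and follows essentially the same route as the paper: define $\hat G$ as the Fourier transform of $G$ in the $y$-variable, check that it solves \eqref{FTG} (the paper simply asserts this, while you spell out the integration-by-parts computation and invoke the assumed uniqueness), and then substitute into the two cases of Proposition \ref{propG} using the affine form of $\Phi$. Your discussion of the integrability and Fubini justifications is more detailed than the paper's, but the argument is the same.
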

	
	\begin{remark}
		For the characteristic function $\bbE[e^{i\theta Z_{J_{T_t}}}]$, we have a formula that corresponds to the case in which $r=-i\theta$; that is,
		\begin{align}
		\bbE e^{i\theta Z_{J_{T_t}}}&=\frac{1}{2\pi}\iint e^{-\theta^2j/2}\brak{\int f_{J_y}( j)e^{i\xi y}dy}\bbE\edg{e^{-i\xi T_t}\scE^{\theta,j}_t}d\xi dj\\
		\bbE\edg{e^{-i\xi T_t}\scE^{\theta,j}_t}&=\begin{cases}
		\iint\exp\brak{\Phi(t,j,-\xi,x,y)}G^\theta(j,x,y)dxdy&\text {if $t>j$}\\
		\iint \exp\brak{-i\xi y}G^\theta(t,x,y)dxdy&\text {if $t\leq j$},
		\end{cases}
		\end{align}
		where $G^\theta$ is the solution to
		\begin{align}
		(\partial_u G^\theta)(u,x,y)&=\half\partial_{xx}\edg{|\sigma(u,x)|^2G^\theta(u,x,y)}-\partial_{x}\edg{\brak{\mu(u,x)+i\theta\rho\sigma(u,x)}G^\theta(u,x,y)}\\\notag
		&\quad-x\partial_yG^\theta(u,x,y)\\
		G^\theta(0,x,y)
		&=\delta_{v_0}(x)\delta_{T_0}(y).
		\end{align}
		As we noted earlier, we cannot simply change $r=-i\theta$ to extend the Girsanov transform. See the appendix for the derivation of and details about the statement above.
	\end{remark}
	Let $ Y_t=\alpha J_{T_t}+\beta Z_{J_{T_t}} $. The following theorem gives the moment-generating function for the TCL process $Y$.
	\begin{theorem}\label{mgf_generalmodel} Let $Y_t:=\alpha J_{T_t}+\beta Z_{J_{T_t}}$. Then, the Laplace transform of $Y$ is
		\begin{align}
		\bbE e^{-rY_t}=\frac{1}{2\pi}\iint e^{r^2\beta^2j/2-r\alpha j}\brak{\int e^{i\xi y}f_{J_y}( j)dy}\bbE\edg{\exp\brak{-i\xi T^{(j,r\beta)}_t}}d\xi dj.
		\end{align}
	\end{theorem}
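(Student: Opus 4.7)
The plan is to mirror the proof of Theorem \ref{mainthm}, modifying the intermediate Gaussian computation to accommodate the scale $\beta$ and the extra deterministic drift $\alpha J_{T_t}$. First I would establish the natural generalization of Proposition \ref{mainthm1}: conditional on $J_{T_t}=j$, $T_t=y$, and $B_j=b$, the only remaining randomness in $Y_t=\alpha J_{T_t}+\beta\rho B_{J_{T_t}}+\beta\sqrt{1-\rho^2}W_{J_{T_t}}$ comes from $W_j$, which is independent of $(B,J,T)$. Hence $Y_t$ is conditionally Gaussian with mean $\alpha j+\beta\rho b$ and variance $\beta^2(1-\rho^2)j$, so that
\begin{align*}
f_{Y_t}(\tilde y)=\iiint\frac{1}{\sqrt{2\pi\beta^2j(1-\rho^2)}}\exp\brak{-\frac{(\tilde y-\alpha j-\beta\rho b)^2}{2\beta^2(1-\rho^2)j}}f_{B_j,T_t}(b,y)f_{J_y}(j)\,dy\,db\,dj.
\end{align*}

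Next, I would compute $\bbE e^{-rY_t}=\int e^{-r\tilde y}f_{Y_t}(\tilde y)\,d\tilde y$ by integrating $\tilde y$ first against the Gaussian moment generating function, which produces the factor $e^{-r\alpha j-r\beta\rho b+r^2\beta^2(1-\rho^2)j/2}$. The remaining $b$-integral $\int e^{-r\beta\rho b}f_{B_j,T_t}(b,y)\,db$ I would evaluate, as in Theorem \ref{mainthm}, by a Girsanov change of measure $d\bbQ/d\bbP=\exp\brak{-r\beta\rho B_j-r^2\beta^2\rho^2j/2}$: under $\bbQ$ the process $B_u+r\beta\rho(u\wedge j)$ is a Brownian motion, and comparing with \eqref{sdechanged} shows that the $\bbQ$-law of $(v,T)$ coincides with the $\bbP$-law of $(v^{(j,r\beta)},T^{(j,r\beta)})$. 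This gives
\begin{align*}
\int e^{-r\beta\rho b}f_{B_j,T_t}(b,y)\,db=e^{r^2\beta^2\rho^2j/2}f_{T^{(j,r\beta)}_t}(y),
\end{align*}
and combining the two quadratic terms in $r\beta$ collapses to $r^2\beta^2j/2$, leaving $\bbE e^{-rY_t}=\iint e^{r^2\beta^2j/2-r\alpha j}f_{J_y}(j)f_{T^{(j,r\beta)}_t}(y)\,dy\,dj$.

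The final step is Fourier inversion in $y$: substituting $f_{T^{(j,r\beta)}_t}(y)=\frac{1}{2\pi}\int e^{-i\xi y}\bbE[e^{i\xi T^{(j,r\beta)}_t}]\,d\xi$ and relabelling $\xi\mapsto-\xi$ produces exactly the stated formula. Conceptually nothing new happens beyond Theorem \ref{mainthm}: the factor $\beta$ simply rescales the measure-change parameter (hence $T^{(j,r\beta)}$ instead of $T^{(j,r)}$), while the drift $\alpha J_{T_t}$ contributes only the deterministic factor $e^{-r\alpha j}$ once we condition on $J_{T_t}=j$. The main technical obstacle is justifying Fubini for the iterated integrations in $(\tilde y,b,y,j)$ and for the subsequent introduction of the Fourier variable $\xi$; this requires either integrability of $T^{(j,r\beta)}_t$ and $f_{J_y}$ in a suitable sense or a limiting/regularization argument, but the identical delicate point already underlies the proof of Theorem \ref{mainthm}.
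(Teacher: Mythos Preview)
Your proposal is correct and follows essentially the same route as the paper. The paper's proof proceeds by conditioning on $(J,B)$ to integrate out $W$, rewrites the result as an integral against $f_{B_j,T_t}(b,y)f_{J_y}(j)$, and then applies the Girsanov change of measure with parameter $r\beta$ (using the $Q$ function from the proof of Theorem~\ref{mainthm}) followed by Fourier inversion in $y$; your version simply packages the conditioning step as the density formula for $Y_t$ before integrating $e^{-r\tilde y}$, which is an equivalent but slightly longer presentation of the same computation.
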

	\begin{remark}
		Note that for a constant $j$, the Laplace transform of $\alpha j+\beta W_{j}$ is $e^{r^2\beta^2j/2-r\alpha j}$.
	\end{remark}

	\section{\hasan{Model specifications} }\label{HW}
	
	\hasan{We develop candidate option pricing models by modeling the underlying asset return process as the sum of two TCL processes. Our framework is based on two ingredients: the jump structure and the source and dynamic of the stochastic volatility component(s). We show that all specified option pricing models in \cite{Huang:in} are nested to our framework.}
	
	
	\subsection{Dynamic of underlying asset returns}\label{sec_dynamic_returns}
	
	\hasan{We use a correlated TCL process $ Y$ to model economic uncertainty. The price of the financial asset (e.g., index) under consideration evolves in continuous time and is given by 
		\begin{align}
		S_t=S_0 e^{(r_{int}-\delta_{div})t+Y_t},
		\end{align}
		where $r_{int}$ and $\delta_{div}$ are constants that represent, respectively, the risk free rate of interest and the dividend yield of the asset, and 
		\begin{align}
		Y_t=\log\left(e^{\delta_{div} t} S_t/S_0\right)-r_{int}t
		\end{align}
		models excess log returns, under the usual assumption that dividends are continuously reinvested. }
	
	\hasan{In our framework, for a given filtration $\bbF$, the process $X_t$ is an adapted \levy process. Then, by the L\'evy-It\^o decomposition theorem, there are continuous \levy $X^c$ and pure-jump \levy processes $X^j$ that are independent and $X=X^c+X^j$. Along the line of \cite{Huang:in}, we study the effect of time changes $T^c$ and $T^j$, which we apply to $X^c$ and $X^j$, respectively. We can assume the following without losing generality:
		\begin{align}\label{corr_TCLP}
		Y_t&=X^c_{T^c_t}+X^j_{T^j_t}\\
		X^c_t&=a^c_1t+a^c_2B^c_t+a^c_3B^j_t+a^c_4 W^c_t\\
		X^j_t&= a^j_1J_t+a^j_2W^j_{J_t}\\
		dv^c_t&=\mu^c(1-v^c_t)dt+\sigma^c\sqrt{v^c_t(1-\rho^2)}dB^c_t+\rho\sigma^c\sqrt{v^c_t}dB^j_t\\
		dv^j_t&=\mu^j(1-v^j_t)dt+\sigma^j\sqrt{v^j_t}dB^j_t\\
		T^c_t&=\int_0^tv^c_sds\\
		T^j_t&=\int_0^tv^j_sds.
		\end{align}
		Here, $(a^x_i,\mu^x,\sigma^x)_{x\in\crl{c,j}, i=1,2,...}$, and $\rho$
		are given constants, $B^c,B^j,W^c, W^j$ are independent Brownian motions, and $J$ is a pure jump process. This is possible because, under the adaptedness condition in \cite{Huang:in}, $X^j$ is always independent of Brownian motions $B^c,B^j$, and $W^c$ by Remark \ref{propindep}.
	}
	
	\subsection{Jump structure}
	
	Any \levy jump process (or any \levy process) is uniquely defined by its \levy triplet: location, scale, and \levy measure. Its \levy measure captures the structure of the jump, which controls the arrival rate of jumps of size $ x\in\mathbb{R}^0 $ (the real line excluding zero). We can group \levy jump processes into three categories: (i) finite activity, (ii) infinite activity with finite variation, and (iii) infinite activity with infinite variation.
	
	Here, we define a \levy process by changing the time of a Brownian motion with a subordinator, which is a pure jump process; that is, equation \eqref{levy_process}. The advantage of this definition is that one can construct any jump process from \eqref{levy_process} by specifying only the subordinator $ J_t $. For example, if $ J_t $ is the gamma process, then $ X_t^j $ is the variance gamma process. One can show that equation \eqref{levy_process} encompasses all specifications for the jump structure in \cite{Huang:in}.

	\subsection{Source of stochastic volatility}
	
	\hasan{Equation \eqref{corr_TCLP} shows that there are several ways to introduce stochastic volatility for a correlated TCL process. The stochastic volatility can come either from the instantaneous variance of the continuous component $ X^c_t $, from the arrival rate of the jump component $ X^j_t $, or both. 
		
		\cite{Huang:in} consider the return process before time changes as the sum of a Brownian motion with constant drift and a pure jump process.\footnote{We can express any \levy process in this way via the L\'evy-It\^o decomposition, see equation (3) of \cite{Huang:in}.} They specify four different stochastic volatility models by changing the time of the Brownian motion and/or jump component of the return process. Their four stochastic volatility models are: stochastic volatility from a diffusion component (SV1); stochastic volatility from a jump component (SV2); joint contribution from jump and diffusion (SV3), where the stochastic time change is the same for both the diffusion and jump component; and joint contribution from jump and diffusion processes with different stochastic time changes (SV4).
		
		If we apply a stochastic time change to the continuous component only, we have $ Y_t=X^c_{T^c_t}+X^j_t $. The arrival rate of jumps remains constant. This case is equivalent to the SV1 models in \cite{Huang:in}. \cite{bates1996jumps} and \cite{bakshi1997empirical} are also nested in this setting. Alternatively, if we leave the continuous component unchanged and apply a stochastic time change only to the jump component; that is, $ Y_t=X^c_{t}+X^j_{T^j_t} $, then stochastic volatility comes from just the time variation in the arrival rate of jumps. This case is equivalent to the SV2 models in \cite{Huang:in}. Another possibility for introducing stochastic volatility to the model is that stochastic volatility comes from both the continuous and jump components simultaneously; that is, $ Y_t=X^c_{T^c_t}+X^j_{T^c_t} $. This case is equivalent to the SV3 models in \cite{Huang:in}. \cite{bates2000post} and \cite{pan2002jump} are also variations of this category. The last alternative is to apply separate time changes to the continuous and jump components. Under this specification, the instantaneous variance of the continuous component and the arrival rate of the jump component follow separate stochastic processes. This case is analogous to the SV4 models in \cite{Huang:in}. We should note that SV4 encompasses all of the other three specifications (SV1--SV3) as special cases. 
	}

	\subsection{Moment generating function}
	\hasan{We devote this section to deriving the moment-generating function for process $ Y_t=X^c_{T^c_t}+X^j_{T^j_t} $, where $ X^c_{T^c_t} $ and $ X^j_{T^j_t} $ represents its continuous and jump part, respectively.\footnote{To save space, we do not discuss the derivation of the characteristic function; however, one can obtain the characteristic function by deploying a similar technique, see Section \ref{sec_mgf_cf}.} Under the assumptions of Section \ref{sec_dynamic_returns}, one can calculate the Laplace transform of $ Y_t$,
		for specific choices of constants that correspond to the stochastic volatility models of \cite{Huang:in}. For notational convenience, we denote $ \scL_J(r, t):=\bbE e^{-r J_t} $. Note that $J$ is given by our model and
		\begin{align}
		\bbE \edg{\exp\brak{-rX^j_{T^j_t}}\vert T^j_t}&=\bbE\edg{ \exp\brak{-ra^j_1 J_{T^j_t}-ra^j_2W^j_{J_{T^j_t}}}\vert T^j_t}\\
		&=\bbE\edg{ \exp\brak{-ra^j_1 J_{T^j_t}}\bbE\edg{\exp\brak{-ra^j_2W^j_{J_{T^j_t}}}\vert J_{T^j_t}, T^j_t}\vert T^j_t}\\
		&=\bbE \edg{ \exp\brak{-ra^j_1 J_{T^j_t}+\frac{r^2|a^j_2|^2}{2}J_{T^j_t}\vert T^j_t}}\\
		&=\scL_{J}\brak{ra^j_1-\frac{r^2|a^j_2|^2}{2}, T^j_t}.
		\end{align}
		In addition,
		\begin{align}
		\bbE \exp\brak{-rX^c_{T^c_t}}&=\bbE \exp\brak{-ra^c_1 T^c_t+a^c_2B^c_{T^c_t}+a^c_3B^j_{T^c_t}+a^c_4 W^c_{T^c_t}}\\
		&=\bbE\edg{ \exp\brak{-r\brak{a^c_1 T^c_t+a^c_2B^c_{T^c_t}+a^c_3B^j_{T^c_t}}}\bbE\edg{\exp\brak{-ra^c_4 W^c_{T^c_t}}\vert B^c, B^j}}\\
		&=\bbE\edg{ \exp\brak{-r\brak{(a^c_1-r|a^c_4|^2/2) T^c_t+a^c_2B^c_{T^c_t}+a^c_3B^j_{T^c_t}}}}.
		\end{align}
		Note that for $N:=|(\rho a^c_2-\sqrt{1-\rho^2}a^c_3,(1-\rho^2)a^c_3-\rho\sqrt{1-\rho^2}a^c_2)|, B:=\sqrt{1-\rho^2}B^c+\rho B^j$, 
		\begin{align}
		\tilde B:=\frac{1}{N}\brak{(\rho^2a^c_2-\rho\sqrt{1-\rho^2}a^c_3)B^c+((1-\rho^2)a^c_3-\rho\sqrt{1-\rho^2}a^c_2)B^j},
		\end{align}
		we know $B$ and $\tilde B$ are independent Brownian motions with
		\begin{align}
		\hat{B}_t&:=(a^c_1-r|a^c_4|^2/2)t+a^c_2B_t^c+a^c_3B_t^j\\
		&= (a^c_1-r|a^c_4|^2/2)t+(a^c_2\sqrt{1-\rho^2}+\rho a^c_3) B_t+N\tilde B_t.
		\end{align}
		The Laplace transform of $X^c_{T^c_t}=\hat{B}_{T^c_t}$ is
		\begin{align}
		\bbE \exp(-r \hat{B}_{T^c_t})&=\bbE\edg{\exp\brak{-r(a^c_2\sqrt{1-\rho^2}+\rho a^c_3) B_{T^c_t}-r(a^c_1-r|a^c_4|^2/2)T^c_t}\bbE\edg{e^{-rN\tilde B_{T^c_t}}\vert B}}\\
		&=\bbE\edg{\exp\brak{-r(a^c_2\sqrt{1-\rho^2}+\rho a^c_3) B_{T^c_t}-r(a^c_1-r(|a^c_4|^2+N^2)/2)T^c_t}}
		\end{align}
		If we let $J_t=t, \alpha=a^c_1-r(|a^c_4|^2+N^2)/2,\beta=a^c_2\sqrt{1-\rho^2}+\rho a^c_3$, and $Z=B$ in Theorem \ref{mgf_generalmodel}, we obtain
		\begin{align}
		\bbE e^{-rX^c_{T^c_t}}=\frac{1}{2\pi}\iint e^{r^2\beta^2j/2-r\alpha j+i\xi j}\bbE\edg{\exp\brak{-i\xi \tilde T_t}}d\xi dj,
		\end{align}
		where
		\begin{align}\label{tildeT}
		d\tilde v_t&=\edg{\mu^c(1-\tilde v_u)-r\beta\sigma^c \sqrt{\tilde v_u}{\bf 1}_{u\leq j}}du+\sigma^c\sqrt{\tilde v_u}dB_u,\\
		\tilde T_t&:=\int_0^t\tilde v_udu.
		\end{align}
		
		\subsubsection{When $T^j$ and $(X^c, T^c)$ are independent}
		
		The case in which $T^j$ is independent of $X^c$ and $T^c$ includes the SV1 ($v^j_0=1, \mu^j=\sigma^j=\rho=0$) and SV4 ($\rho=0$) models in \cite{Huang:in}. In this case, we have $\rho=0$ in the previous subsection and 
		\begin{align}
		\bbE e^{-rY_t}&=\bbE \exp\brak{-rX^c_{T^c_t}}\bbE \exp\brak{-r X^j_{T^j_t}}\\
		&=\frac{1}{2\pi}\bbE\edg{\scL_{J}\brak{ra^j_1-\frac{r^2|a^j_2|^2}{2}, T^j_t}}\iint e^{r^2\beta^2j/2-r\alpha j+i\xi j}\bbE\edg{\exp\brak{-i\xi \tilde T_t}}d\xi dj,
		\end{align}
		where $\tilde T$ is given by \eqref{tildeT}.
		
		\subsubsection{When $T^c$ and $(X^c, T^j)$ are independent}
		This case becomes the SV2 model in \cite{Huang:in} when $v^c_0=1$ and $\rho=a^c_2=\mu^c=\sigma^c=0$. Let $\rho=0$ and $a^c_2=0$; then,
		\begin{align}
		\bbE e^{-rY_t}&=\bbE\edg{\exp\brak{-r\brak{X^j_{T^j_t}+a^c_1T^c_t+a^c_3B^j_{T^c_t}}}\bbE\edg{\exp\brak{-ra^c_4 W^c_{T^c_t}}\vert X^j,B^c B^j}}\\
		&=\bbE\edg{\exp\brak{-r\brak{(a^c_1-r|a^c_4|^2/2)T^c_t+X^j_{T^j_t}+a^c_3B^j_{T^c_t}}}}\\
		&=\bbE\edg{\exp\brak{-r\brak{(a^c_1-r|a^c_4|^2/2)T^c_t+a^c_3B^j_{T^c_t}}}\scL_J\brak{ra^j_1-\frac{r^2|a^j_2|^2}{2}, T^j_t}}.
		\end{align}
		Note that
		\begin{align}
		\scF(t,s)&:=\bbE\edg{a^c_3B^j_{s}\scL_J\brak{ra^j_1-\frac{r^2|a^j_2|^2}{2}, T^j_t}}\\
		&=\iint a^c_3z\scL_J\brak{ra^j_1-\frac{r^2|a^j_2|^2}{2}, y}f_{T^j_t,B^j_s}(y,z)dydz,
		\end{align}
		where we can calculate $f_{T^j_t,B^j_s}$ by Proposition \ref{prep_joint1}. Then,
		\begin{align}
		\bbE e^{-rY_t}=\bbE\edg{\exp\brak{-r\brak{(a^c_1-r|a^c_4|^2/2)T^c_t}}\scF(t,T^c_t)}.
		\end{align}
		\subsubsection{When $T^c=T^j$}
		This is the SV3 case in \cite{Huang:in} when $\rho=1, a^c_2=0, \mu^c=\mu^j$, and $\sigma^c=\sigma^j$. Then, 
		\begin{align}
		\bbE e^{-rY_t}&=\bbE\edg{\exp\brak{-rX^c_{T^j_t}}\bbE\edg{\exp\brak{-rX^j_{T^j_t}}\vert T^j, X^c}}\\
		&=\bbE\edg{\exp\brak{-r\brak{a^c_1T^j_t+a^c_3B^j_{T^j_t}+a^c_4W^c_{T^j_t}}}\scL_J\brak{ra^j_1-\frac{r^2|a^j_2|^2}{2}, T^j_t}}\\
		&=\bbE\edg{\exp\brak{-r\brak{(a^c_1-r|a^c_4|^2/2)T^j_t+a^c_3B^c_{T^j_t}}}\scL_J\brak{ra^j_1-\frac{r^2|a^j_2|^2}{2}, T^j_t}}.
		\end{align}
		We can calculate the last expression using the following formula and our result from calculating $f_{T_t,B_j}$ using Proposition \ref{prep_joint1}.
		\begin{align}
		\bbE g(T^j_t)\exp\brak{-r B^j_{T^j_t}}&=\iint g(y)e^{-rz}f_{T^j_t,B^j_{T^j_t}}(y,z)dydz\\
		&=\iint g(y)e^{-rz}f_{T^j_t,B^j_{y}}(y,z)dydz.
		\end{align}
	}
	\section{A note on the implementation}
		In order to find the transitional density of a correlated TCL process, one can employ the following steps: (i) solve \eqref{FPq} or \eqref{FPqFourier}, and (ii) apply Theorem \ref{mainthm0}. On the other hand, if one wants to find the Laplace transform of the correlated TCL process, we propose the following steps: (i') solve \eqref{FKG} or \eqref{FTG} based on the model, (ii') find $\bbE\edg{\exp\brak{-i\xi T^{(j,r)}_t}}$ using Proposition \ref{propG} or \ref{propFTG}, and (iii') apply Theorem \ref{mainthm}. 
		Each method has their own strengths and weaknesses. The former approach requires more computational power to calculate the PDE, but the latter approach needs an inverse transform.

	\section{Summary and future research}
	\edit{In this paper, we discussed why the framework in \cite{Carr:2004hl} is extremely restrictive and their proposed specifications are not compatible with their assumptions. On the other hand, we derived an alternative framework in which we model the time changes by an adapted process without being a stopping time. We checked the validity of our model by providing a no arbitrage condition and indicating how one can hedge contingent claims in our framework. We also found the distribution of the correlated TCL processes and studied the relationship to \cite{Huang:in}. Given the generality of the specifications in \cite{Huang:in}, our approach is applicable to virtually all of the continuous-time models proposed in the option pricing literature. 
		
		
		
		There are two directions for future investigations. First, one can implement the theoretical results empirically and investigate the empirical performance with the correlated TCL processes in time series analysis and option pricing. In this case, one also needs to analyze the numerical aspects of the PDEs in this article. In other words, one needs to find an efficient numerical method to find the PDEs in this article and to analyze the convergence speed and estimate the error. Another direction of future research is to explore nontrivial models for the time changes that make them stopping times. For such time changes, \cite{Carr:2004hl} provide an analytic formula for the characteristic function of a correlated TCL process when the \levy and the time changes are not independent, and when the time changes are stopping times. 
	}

\newpage
\appendix

\begin{figure}[!ht]
	\renewcommand{\thefigure}{\arabic{figure}}
	\begin{center}
		\includegraphics[width=0.9\textwidth]{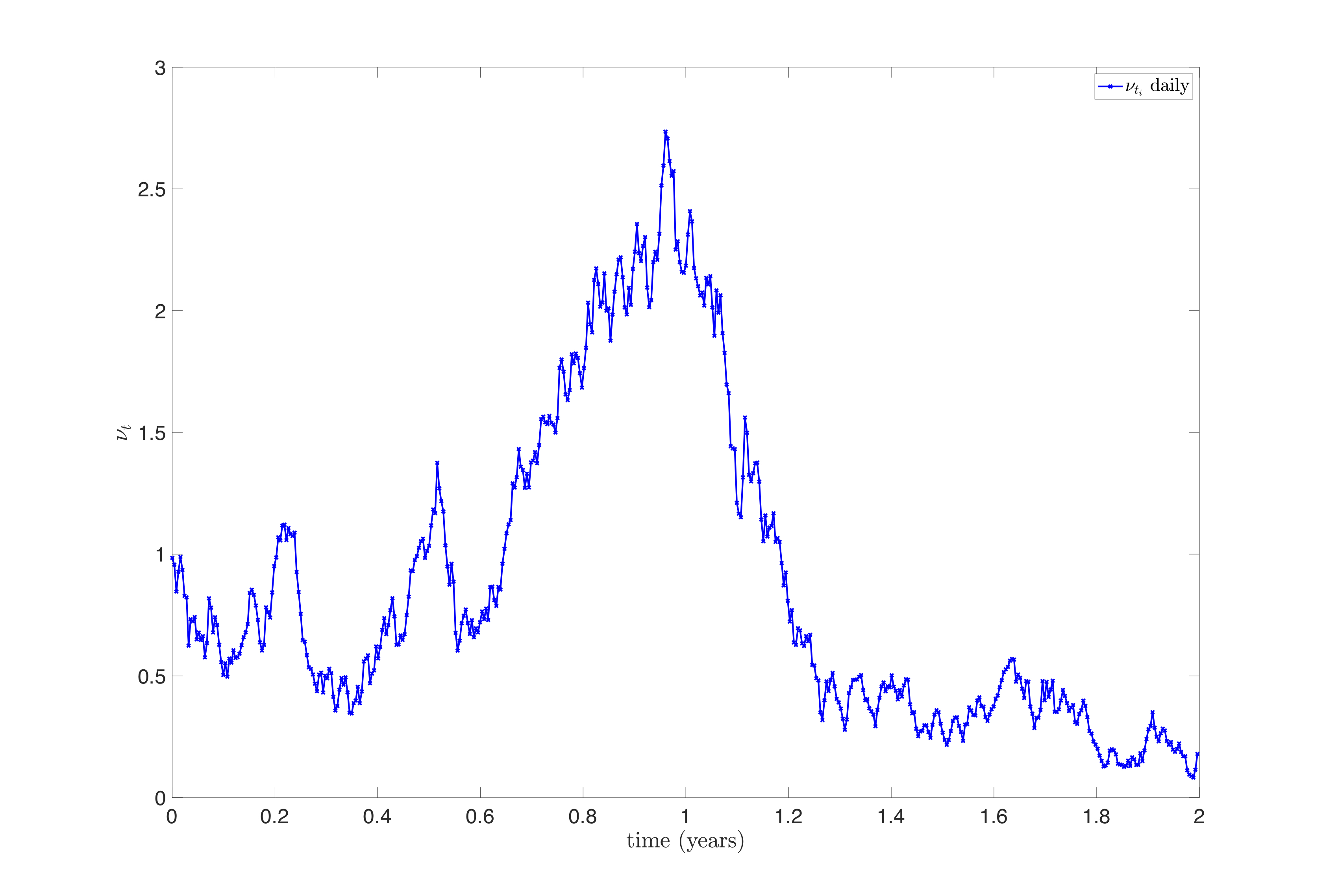}  
		\includegraphics[width=0.9\textwidth]{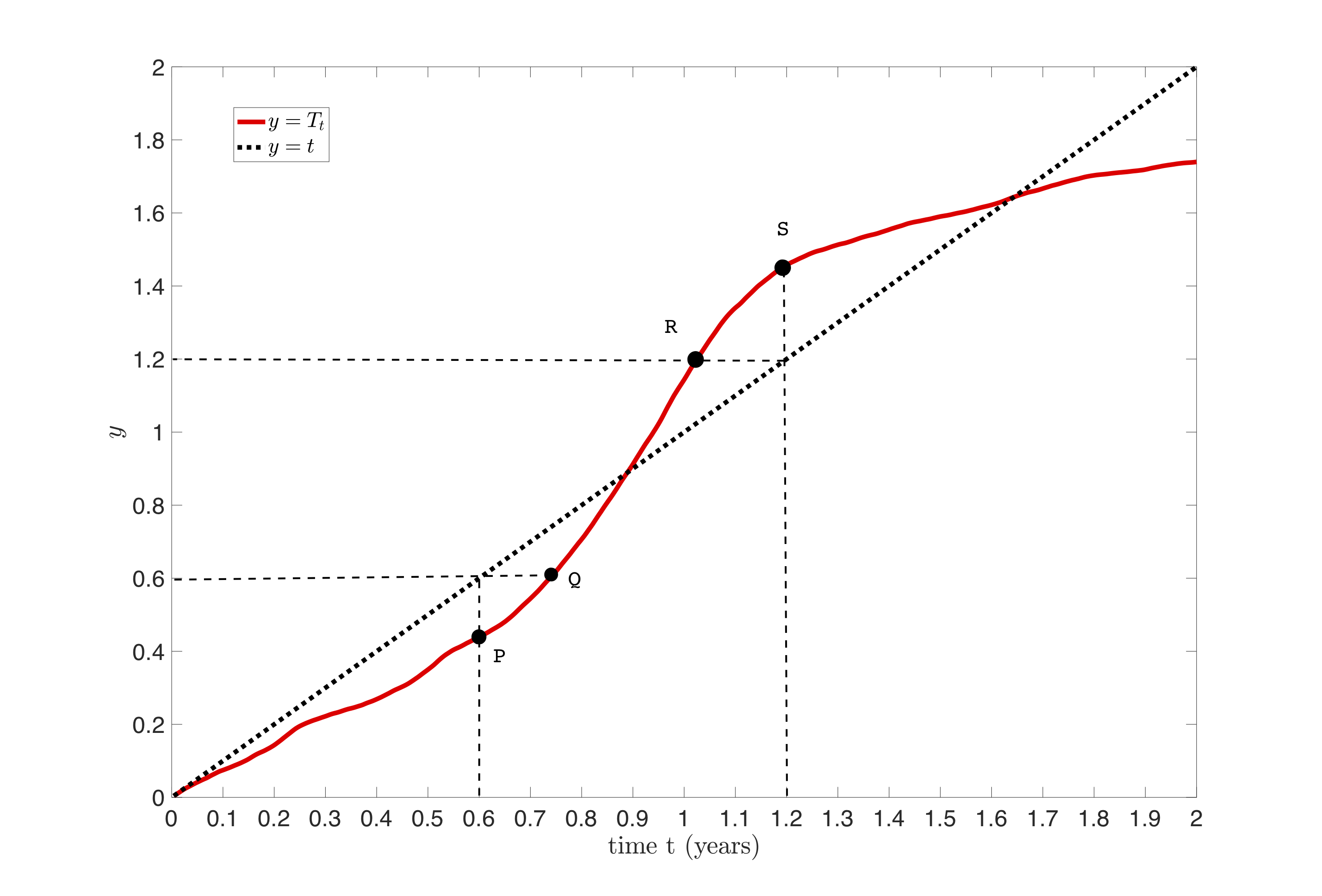}
	\end{center}     
	\caption{(top) Two years simulated Cox-Ingersoll-Ross (CIR) process, $ v = (v_t)_{t\geq0}$. (down) Integrated CIR process. $ T_t=\int_{0}^{t} v_s ds$.}\label{simu_cir}
\end{figure} 

\let\clearpage\relax
\setstretch{1} 
\section{Skimming of \cite{Carr:2004hl}}

In \cite{Carr:2004hl}, they assumed the followings in Section 2:
\begin{itemize}
	\setlength{\itemindent}{-.2in}
	\item The probability space $(\Omega,\scF,\bbP)$, endowed with a standard complete filtration $\bbF=(\scF_t)_{t\geq0}$.
	\item $X$ is a {\levy} process which is adapted to $\bbF$ with $\bbE e^{i\theta X_t}=e^{-t\Psi_X(\theta)}$.
	\item $T$ is a nondecreasing right-continuous and left-limit process such that, for any given $t$, $T_t$ is a stopping time with respect to $\bbF$, $T_t$ is finite almost surely, $\bbE[T_t]=t$. In addition, we assume $T_t\xrightarrow{t\to\infty}\infty$ almost surely.
\end{itemize}
Under these assumptions, \cite{Carr:2004hl} define $ Z_t(\theta):=\exp\brak{i\theta^T X_t+t\Psi_X(\theta)} $,
\begin{align}\label{tcl_process}
Y_t:= X_{T_t}
\end{align}
and they proposed the following lemma and theorem.
\begin{lemma}[Lemma 1 of \cite{Carr:2004hl}]
	For every $\theta\in\scD$, $M_t(\theta):=Z_{T_t}(\theta)$ is a complex-valued $\bbP$-martingale with respect to $(\scF^{Y,T}_t)_{t\geq0}$, the filtration generated by the process $\crl{(Y_t,T_t):t\geq 0}$.
\end{lemma}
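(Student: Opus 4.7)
My plan is to reduce the claim to the classical optional stopping theorem applied to the complex exponential martingale of the underlying \levy process, and then project onto the smaller filtration $\scF^{Y,T}_t$ by the tower property.

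First I would verify that $Z_t(\theta)=\exp(i\theta^\top X_t+t\Psi_X(\theta))$ is itself an $\bbF$-martingale. For $s\leq t$, using the independence and stationarity of the increments of the \levy process $X$ (which is $\bbF$-adapted) together with the defining identity $\bbE e^{i\theta(X_t-X_s)}=e^{-(t-s)\Psi_X(\theta)}$, one computes
\begin{align*}
\bbE\!\left[Z_t(\theta)\mid\scF_s\right]
&=e^{i\theta X_s}\,\bbE\!\left[e^{i\theta(X_t-X_s)}\mid\scF_s\right]e^{t\Psi_X(\theta)}\\
&=e^{i\theta X_s}\,e^{-(t-s)\Psi_X(\theta)}\,e^{t\Psi_X(\theta)}=Z_s(\theta).
\end{align*}
The set $\scD$ should be interpreted as those $\theta$ for which this expectation is finite, so this step is essentially a definitional check.

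Next I would apply the optional stopping theorem to the pair of stopping times $T_s\leq T_t$. Since CW assume each $T_u$ is an $\bbF$-stopping time with $\bbE T_u=u<\infty$, and (on the domain $\scD$) the complex martingale $Z(\theta)$ has modulus $|Z_u(\theta)|=e^{u\,\mathrm{Re}\,\Psi_X(\theta)}$, optional stopping yields
\begin{align*}
\bbE\!\left[Z_{T_t}(\theta)\mid\scF_{T_s}\right]=Z_{T_s}(\theta).
\end{align*}
I would then note the filtration inclusion $\scF^{Y,T}_s\subset\scF_{T_s}$: for every $u\leq s$, both $T_u$ and $Y_u=X_{T_u}$ are $\scF_{T_u}$-measurable, hence $\scF_{T_s}$-measurable. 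Combined with the fact that $Z_{T_s}(\theta)=\exp(i\theta Y_s+T_s\Psi_X(\theta))$ is manifestly $\scF^{Y,T}_s$-measurable, the tower property gives
\begin{align*}
\bbE\!\left[M_t(\theta)\mid\scF^{Y,T}_s\right]
=\bbE\!\left[\bbE\!\left[Z_{T_t}(\theta)\mid\scF_{T_s}\right]\mid\scF^{Y,T}_s\right]
=Z_{T_s}(\theta)=M_s(\theta),
\end{align*}
which is the martingale property sought.

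The main obstacle is the justification of optional stopping: for unbounded stopping times $T_t$ one cannot simply quote the bounded version. For real $\theta$ one has $|Z_u(\theta)|\equiv 1$ so the family $\{Z_{T_t\wedge n}(\theta)\}_n$ is bounded and the dominated convergence completes the argument after a localization at $T_t\wedge n$; for complex $\theta\in\scD$ one needs either boundedness of $T_t$ or an integrability hypothesis of the form $\bbE\,e^{T_t\,\mathrm{Re}\,\Psi_X(\theta)}<\infty$, which is implicit in CW's choice of $\scD$. Apart from this delicate integrability issue, the rest of the argument is a routine application of the tower property once the filtration inclusion $\scF^{Y,T}_s\subset\scF_{T_s}$ is observed—and this very inclusion is exactly what Proposition \ref{pre_stopping_adapted} later shows to be extremely restrictive outside the stopping-time setting.
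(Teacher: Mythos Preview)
Your argument is correct and follows precisely the three-step outline the paper attributes to CW: (i) verify that $Z(\theta)$ is an $\bbF$-martingale via the independent-increment property of $X$, (ii) apply optional stopping at the $\bbF$-stopping times $T_s\leq T_t$, and then pass to the smaller filtration $\scF^{Y,T}_s\subset\scF_{T_s}$ by the tower property. Your additional care about the integrability needed for optional stopping with unbounded $T_t$ (the localization $T_t\wedge n$ and the condition $\bbE\,e^{T_t\,\mathrm{Re}\,\Psi_X(\theta)}<\infty$) fills in a detail the paper's sketch leaves implicit; one small remark is that your closing sentence slightly conflates the inclusion $\scF^{Y,T}_s\subset\scF_{T_s}$ (which is innocuous and always holds when $T_s$ are stopping times) with the message of Proposition~\ref{pre_stopping_adapted}, which concerns the \emph{reverse} implication that adaptedness together with the stopping-time property forces knowledge of the path of $T$ up to its $t$-level crossing.
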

\begin{theorem}[Theorem 1 of \cite{Carr:2004hl}]
	Let $\bbE$ and $\bbE^\theta$ be expectations under measure $\bbP$ and $\bbQ(\theta)$, respectively, where
	$\left.\frac{d\bbQ(\theta)}{d\bbP}\right|_t:=M_t(\theta).$
	Then we have the following formula:
	\begin{align}
	\bbE[e^{i\theta^\intercal Y_t}]=\bbE^\theta[e^{-T_t\Psi_X(\theta)}].
	\end{align}
\end{theorem}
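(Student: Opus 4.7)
The plan is to derive the identity by an essentially algebraic cancellation once the complex change of measure is in place. The key observation is that, by construction,
\begin{align*}
M_t(\theta)=Z_{T_t}(\theta)=\exp\bigl(i\theta^{\intercal}X_{T_t}+T_t\Psi_X(\theta)\bigr)=\exp\bigl(i\theta^{\intercal}Y_t+T_t\Psi_X(\theta)\bigr),
\end{align*}
so the density packaging the characteristic exponent of $X$ evaluated at $T_t$ and the exponential of $Y_t$ differ by exactly the factor we want to cancel against $e^{-T_t\Psi_X(\theta)}$.

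First I would invoke Lemma 1 of CW to guarantee that $(M_t(\theta))_{t\geq 0}$ is a complex-valued $\bbP$-martingale with respect to $\bbF^{Y,T}=(\scF^{Y,T}_t)_{t\geq 0}$, which legitimizes the (complex) measure $\bbQ(\theta)$ via $d\bbQ(\theta)/d\bbP\big|_t=M_t(\theta)$. Here one must be a bit careful: $\bbQ(\theta)$ is in general a signed/complex measure rather than a probability measure, but the associated integration functional $\bbE^\theta[\cdot]=\bbE[M_t(\theta)\,\cdot\,]$ is still well defined on bounded $\scF^{Y,T}_t$-measurable random variables, because $|M_t(\theta)|=e^{T_t\mathrm{Re}\,\Psi_X(\theta)}$ has finite expectation under the standing assumption $\theta\in\scD$.

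Next I would apply this definition to the integrand $e^{-T_t\Psi_X(\theta)}$, which is $\scF^{Y,T}_t$-measurable since $T$ is part of the generating process. This yields
\begin{align*}
\bbE^\theta\!\bigl[e^{-T_t\Psi_X(\theta)}\bigr]
=\bbE\!\bigl[M_t(\theta)\,e^{-T_t\Psi_X(\theta)}\bigr]
=\bbE\!\bigl[e^{i\theta^{\intercal}Y_t+T_t\Psi_X(\theta)-T_t\Psi_X(\theta)}\bigr]
=\bbE\!\bigl[e^{i\theta^{\intercal}Y_t}\bigr],
\end{align*}
which is exactly the claimed identity.

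The computation itself is a one-line cancellation, so the main obstacle is not algebraic but conceptual and measure-theoretic: the martingale property of $M_t(\theta)$ under $\bbF^{Y,T}$ (rather than $\bbF$) is what makes the ``change of measure'' legitimate, and this is precisely where the stopping-time hypothesis on $T_t$ is used in CW through the optional stopping theorem applied to the $\bbF$-martingale $Z_\cdot(\theta)$. If I did not already have Lemma 1 of CW in hand, the hard step would be verifying this martingale property—i.e.\ showing $\bbE[Z_{T_t}(\theta)\mid\scF^{Y,T}_s]=Z_{T_s}(\theta)$—which is exactly the point that breaks down in the adapted-but-non-stopping-time setting that motivates the present paper.
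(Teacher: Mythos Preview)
Your proposal is correct and matches the paper's own description of CW's argument: the paper summarizes the proof as (i) $Z(\theta)$ is an $\bbF$-martingale, (ii) optional stopping at $T_t$ gives Lemma~1, and (iii) Theorem~1 follows from $M(\theta)$ being a density process---your write-up is precisely a fleshed-out version of step~(iii), invoking Lemma~1 for steps~(i)--(ii). You also correctly flag that the stopping-time hypothesis enters only through Lemma~1, which is exactly the point the paper is making.
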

Above theorem implies that one may calculate characteristic function of $Y$, regarding $T$ and $X$ are independent under the new complex measure. These results are the foundation of their paper and it has been widely used in vast literature such as \cite{Huang:in}.

Their proof is based on three steps: (i) prove that $Z(\theta)$ is a martingale under $\bbF$; (ii) apply the optional stopping theorem for stopping time $T_t$ to $Z(\theta)$ to conclude $M$ is a martingale with respect to $(\scF^{Y,T}_s)_{s\geq0}$, and; (iii) prove Theorem 1 using the fact that $M(\theta)$ is a density process.

\section{Proofs}
\textit{Proof of Proposition \ref{mainthm1}} Note that
\begin{align}
f_{W_{J_{T_t}},B_{J_{T_t}},J_{T_t}}(w,b,j)&=f_{W_j,B_j,J_{T_t}}(w,b,j)
=f_{W_j}(w)f_{{B_j},J_{T_t}}(b,j)
\end{align}
since $W_j$ and $(B_j,J_{T_t})$ are independent. Therefore,
\begin{align}
f_{Z_{J_{T_t}}}(z)&=\frac{1}{\rho\sqrt{1-\rho^2}}\iint f_{W_{J_{T_t}},B_{J_{T_t}},J_{T_t}}\brak{\frac{z-b}{\sqrt{1-\rho^2}},\frac{b}{\rho},j}djdb\\
&=\frac{1}{\rho\sqrt{1-\rho^2}}\iint f_{W_{j}}\brak{\frac{z-b}{\sqrt{1-\rho^2}}}f_{{B_j},J_{T_t}} \brak{\frac{b}{\rho},j}dbdj\\
&=\frac{1}{\sqrt{1-\rho^2}}\iint f_{W_{j}}\brak{\frac{z-\rho\tilde b}{\sqrt{1-\rho^2}}}f_{{B_j},J_{T_t}} \brak{\tilde b,j}d\tilde bdj
\end{align}
where $\tilde b=b/\rho$.
On the other hand, since $J$ is independent to $T$, we have
\begin{align*}
f_{B_j,J_{T_t}}(b,j)&=\int f_{B_j,J_y,T_t}(b,j,y)dy\\
&=\int f_{B_j|J_y,T_t}(b|j,y)f_{J_y|T_t}(j|y)f_{T_t}(y)dy\\
&=\int f_{B_{j}|J_y,T_t}(b|j,y)f_{J_y}(j)f_{T_t}(y)dy
\end{align*}
Since $\sigma(B_j,T_t)$ and $J_y$ are independent, we have $f_{B_{j}|J_y,T_t}=f_{B_{j}|T_t}$ and therefore,
\[
f_{B_{J_{T_t}},J_{T_t}}(b,j)
=\int f_{J_y}(j)f_{B_{j},T_t}(b,y)dy.
\]
In sum,
\begin{align}
f_{Z_{J_{T_t}}}(z)
&=\iiint \frac{1}{\sqrt{2\pi j(1-\rho^2)}}e^{-\frac{(z-\rho b)^2}{2(1-\rho^2)j}} f_{B_j,T_t}( b,y)f_{J_y}(j)dyd bdj.
\end{align}
\vspace*{1cm}\QEDB

\noindent\textit{Proof of Proposition \ref{prep_joint1}} Note that
\begin{align*}
q(t,x,y,z)&:=f_{v_t,T_t,B_t}(x,y,z); &t\leq j
\end{align*}
is a solution to \eqref{FPq}.
For $t\leq j$,
\begin{align*}
f_{T_t,B_j}(y,z)&=\iint f_{B_j|B_t,v_t,T_t}(z|\tilde z,x,y)f_{B_t,v_t,T_t}(\tilde z,x,y) dxd\tilde z\\
&=\iint f_{B_j-B_t}(z-\tilde z)f_{B_t,v_t,T_t}(\tilde z,x,y) dxd\tilde z\\
&=\frac{1}{\sqrt{2\pi(j-t)}}\iint e^{-\frac{(z-\tilde z)^2}{2(j-t)}}q(t,x,y,\tilde z) dxd\tilde z
\end{align*}
since $T_t,B_t\in\scF^B_t$ are independent to $B_j-B_t$. 
On the other hand, for $t\geq j$, we have
\begin{align}
\bbE\edg{e^{i\xi T_t}\vert B_j}&=\left.\int e^{i\xi y}f_{T_t|B_j}(y|z)dy\right|_{z=B_j}
\end{align}
and
\begin{align}
\bbE\edg{e^{i\xi T_t}\vert B_j}&=\bbE\edg{\left.\bbE\edg{e^{i\xi T_t}\vert \scF_j} \right\vert B_j}=\bbE\edg{\exp(\Phi(t,j,\xi,v_j,T_j))\vert B_j}\\
&=\left.\iint \exp(\Phi(t,j,\xi,x,y))f_{v_j,T_j|B_j}(x,y|z)dxdy\right|_{z=B_j}\\
&=\left.\iint \sqrt{2\pi j}\exp\brak{\Phi(t,j,\xi,x,y)+\frac{z^2}{2j}}q(j,x,y,z)dxdy\right|_{z=B_j}.
\end{align}
Therefore,
\begin{align}
f_{T_t,B_j}(y,z)&=f_{T_t|B_j}(y|z)f_{B_j}(z)\\
&=\frac{1}{2\pi}\int e^{-i\xi  y}\iint\exp\brak{\Phi(t,j,\xi,x,\tilde y)}q(j,x,\tilde y,z)dxd\tilde yd\xi\\
&=\frac{1}{2\pi}\iiint \exp\brak{\Phi(t,j,\xi,x,\tilde y)-i\xi y}q(j,x,\tilde y,z)dxd\tilde yd\xi.
\end{align}
\vspace*{1cm}\QEDB

\noindent\textit{Proof of Theorem \ref{mainthm0}} Note that, for $\hat f_{T_t,B_j}(\xi,\eta):=\iint e^{-i\xi y-i\eta z}f_{T_t,B_j}(y,z)  dydz$,
\begin{align*}
f_{B_j,T_t}(b,y)=\frac{1}{(2\pi)^2}\iint e^{i\xi y+i\eta b} \hat f_{T_t,B_j}(\xi,\eta)d\xi d\eta.
\end{align*}
Therefore,
\begin{align*}
f_{Z_{J_{T_t}}}(z)&=\iiint \frac{1}{\sqrt{2\pi j(1-\rho^2)}}e^{-\frac{(z-\rho b)^2}{2(1-\rho^2)j}}f_{B_j,T_t}(b,y)f_{J_y}(j)dyd bdj\\
&=\frac{1}{(2\pi)^2}\iiint \iint \frac{1}{\sqrt{2\pi j(1-\rho^2)}}e^{-\frac{(z-\rho b)^2}{2(1-\rho^2)j}}e^{i\xi y+i\eta b} \hat f_{T_t,B_j}(\xi,\eta)f_{J_y}(j)d\xi d\eta dyd bdj\\
&=\frac{1}{(2\pi)^2}\iiint \brak{\int e^{i\eta b}\frac{1}{\sqrt{2\pi j(1-\rho^2)}}e^{-\frac{(z-\rho b)^2}{2(1-\rho^2)j}} db}\hat f_{T_t,B_j}(\xi,\eta)\brak{\int e^{i\xi y}f_{J_y}(j)dy} d\xi d\eta dj\\
&=\frac{1}{(2\pi)^2\rho}\iiint \hat f_{T_t,B_j}(\xi,\eta)\exp\brak{\frac{i\eta z}{\rho}-\frac{(1-\rho^2)\eta^2j}{2\rho^2}}\brak{\int e^{i\xi y}f_{J_y}(j)dy} d\xi d\eta dj.
\end{align*}
On the other hand, if $t\leq j$,
\begin{align*}
\hat f_{T_t,B_j}(\xi,\eta)&=\frac{1}{\sqrt{2\pi(j-t)}}\iiiint e^{-i\xi y-i\eta z}e^{-\frac{(z-\tilde z)^2}{2(j-t)}}q(t,x,y,\tilde z) dxd\tilde z dy dz\\
&=e^{-\frac{(j-t)\eta^2}{2}}\iiint e^{-i\xi y-i\eta\tilde z}q(t,x,y,\tilde z) dxd\tilde z dy\\
&=e^{-\frac{(j-t)\eta^2}{2}}\int\hat q(t,x,\xi,\eta) dx.
\end{align*}
In the case where $t>j$, by noting the calculation of $\bbE\edg{e^{-i\xi T_t}\vert B_j}$ in the proof of Proposition \ref{prep_joint1},
\begin{align*}
\hat f_{T_t,B_j}(\xi,\eta)&=\bbE e^{-i\xi T_t-i\eta B_j}=\bbE \edg{e^{-i\eta B_j}\bbE\edg{e^{-i\xi T_t}\vert B_j}}\\
&=\int e^{-i\eta z}\brak{\iint \sqrt{2\pi j}\exp\brak{\Phi(t,j,-\xi,x,y)+\frac{z^2}{2j}}q(j,x,y,z)dxdy}f_{B_j}(z)dz\\
&=\iint \exp\brak{\Phi(t,j,-\xi,x,y)}\brak{\int e^{-i\eta z}q(j,x,y,z)dz}dxdy\\
&=\frac{1}{2\pi}\iint \exp\brak{\Phi(t,j,-\xi,x,y)}\int e^{i\bar \xi y}\hat q(j,x,\bar{\xi},\eta)d\bar\xi dxdy\\
&=\frac{1}{2\pi}\iint \brak{\int \exp\brak{\Phi(t,j,-\xi,x,y)+i\bar \xi y}dy}\hat q(j,x,\bar{\xi},\eta)d\bar\xi dx.
\end{align*}
\vspace*{1cm}\QEDB

\noindent\textit{Proof of Theorem \ref{main_general}}
Note that
\begin{align}
f_{Y_t}(\tilde y)&=\frac{1}{\beta\sqrt{1-\rho^2}}\iint f_{W_{J_{T_t}},B_{J_{T_t}}, J_{T_t}}\brak{\frac{1}{\beta\sqrt{1-\rho^2}}\brak{\tilde y-\rho\beta b-\alpha j},b,j}dbdj\\
&=\frac{1}{\beta\sqrt{1-\rho^2}}\iint f_{W_j}\brak{\frac{1}{\beta\sqrt{1-\rho^2}}\brak{\tilde y-\rho \beta b-\alpha j}}f_{B_j, J_{T_t}}\brak{b,j}dbdj\\
&=\frac{1}{\beta\sqrt{1-\rho^2}}\iiint\frac{1}{\sqrt{2\pi j}}\exp\brak{-\frac{(\tilde y-\rho\beta b-\alpha j)^2}{2\beta^2j(1-\rho^2)}}f_{B_j,T_t}(b,y)f_{J_y}(j)dydbdj
\end{align}
Then our claim follows by the same argument in the proof of Theorem \ref{mainthm0}.
\vspace*{1cm}\QEDB

\noindent\textit{Proof of Theorem \ref{mainthm}}
Let
\begin{align*}
q(t,x,y,z)&:=f_{v_t,T_t,B_t}(x,y,z); &j\geq t\\
p(j,t,x,y,b)&:=f_{v_t,T_t|B_j}(x,y|b); &j\leq t
\end{align*}
\begin{align*}
\bbE& e^{-rZ_{J_{T_t}}} = \bbE\edg{e^{-r\rho B_{J_{T_t}}}\bbE\edg{e^{-r\sqrt{1-\rho^2}W_{J_{T_t}}}\vert J,B}}\\
&=\iiint\exp\brak{-r\rho b+\frac{r^2(1-\rho^2)j}{2}}f_{B_{j},T_t}(b,y)f_{J_y}(j)dy dbdj\\
&=\int_0^t\frac{1}{\sqrt{2\pi j}}\iiint e^{-r\rho b+\frac{r^2(1-\rho^2)j}{2}-\frac{b^2}{2j}}p(j,t,x,y,b)f_{J_y}(j)dxdydbdj\\
&+\int_t^\infty\frac{1}{\sqrt{2\pi(j-t)}}\iiiint e^{-r\rho b+\frac{r^2(1-\rho^2)j}{2}-\frac{(b-z)^2}{2(j-t)}}q(t,x,y,z) f_{J_y}(j)dxdydzdbdj
\end{align*}
Note that
\begin{align*}
\frac{1}{\sqrt{2\pi(j-t)}}\int e^{-r\rho b-\frac{(b-z)^2}{2(j-t)}}db=\exp\brak{-r\rho z+\frac{r^2\rho^2(j-t)}{2}}
\end{align*}
and therefore, when $j>t$, by integrating with respect to $b$,
\begin{align*}
&\frac{1}{\sqrt{2\pi(j-t)}}\iiiint e^{-r\rho b+\frac{r^2(1-\rho^2)j}{2}-\frac{(b-z)^2}{2(j-t)}}q(t,x,y,z) f_{J_y}(j)dxdydzdb\\
&=\iiint \exp\brak{\frac{r^2(1-\rho^2)j}{2}-r\rho z+\frac{r^2\rho^2(j-t)}{2}}q(t,x,y,z) f_{J_y}(j)dxdydz\\
&=\iiint \exp\brak{-r\rho z+\frac{r^2(j-\rho^2 t)}{2}}q(t,x,y,z) f_{J_y}(j)dxdydz
\end{align*}
By changing the integration parameter $b$ to $z$ for the first term, we have,
\begin{align*}
\bbE e^{-rZ_{J_{T_t}}}&=\int_0^t\frac{1}{\sqrt{2\pi j}}\iiint \exp\brak{-r\rho z+\frac{r^2(1-\rho^2)j}{2}-\frac{z^2}{2j}}p(j,t,x,y,z)f_{J_y}(j)dxdydzdj\\
&+\int_t^\infty\iiint \exp\brak{-r\rho z+\frac{r^2(j-\rho^2 t)}{2}}q(t,x,y,z) f_{J_y}(j)dxdydz dj\\
&=\iiiint \exp\brak{-r\rho z+\frac{r^2(j-(j\wedge t)\rho^2)}{2}}Q(j,t,x,y,z)f_{J_y}(j)dxdydzdj
\end{align*}
where
\begin{align*}
Q(j,t,x,y,z):&=\frac{1}{\sqrt{2\pi j}}e^{-\frac{z^2}{2j}}p(j,t,x,y,z)\indicator{j\leq t}+q(t,x,y,z)\indicator{j>t}\\
&=f_{v_t,T_t,B_{j\wedge t}}(x,y,z).
\end{align*}
If we simplify above equation by considering $g(y,j):=f_{J_y}(j)$ as a distribution with variables $(y,j)$,
\begin{align*}
\bbE e^{-rZ_{N_{Y_t}}}&=\int e^{r^2j/2}e^{-r^2\rho^2(j\wedge t)/2} \bbE\edg{g(T_t,j)e^{-r\rho B_{j\wedge t}}} dj.
\end{align*}
Let us denote $\bbE^{(j,r)}$ be the expectation under changed measure
\begin{align*}
d\bbP^{(j,r)}
=\exp\brak{-r\rho B_{j\wedge t}-\half r^2\rho^2(j\wedge t)}d\bbP=:\scE^{(j,r)}_td\bbP
\end{align*}
and $f^{(j,r)}_\xi$ to be the probability density of a random variable $\xi$ under $\bbP^{(j,r)}$.
Under $\bbP^{(j,r)}$, since $\scE^{(j,r)}_t$ is uniform integrable martingale by Novikov condition, $B^{(j,r)}:= B+r\rho1_{[0,j]}(\cdot)$ is a Brownian motion on $[0,t]$ such that 
\begin{align*}
\sigma\brak{B^{(j,r)}_s:s\leq u}=\sigma\brak{B_s:s\leq u}
\end{align*}
for all $u\leq t$.
Note that, for any $\tilde j$,
\begin{align*}
\bbE\edg{g(T_t,\tilde j)\scE^{(j,r)}_t}=\bbE^{(j,r)}\edg{g(T_t,\tilde j)}=\int g(y,\tilde j)f^{(j,r)}_{T_t}(y)dy=\int f_{J_y}(\tilde j)f^{(j,r)}_{T_t}(y)dy.
\end{align*}
Therefore, 
\begin{align*}
\bbE e^{-rZ_{J_{T_t}}}&=\int e^{\frac{r^2j}{2}}\bbE^{(j,r)}\edg{g(T_t,j)}dj=\iint e^{\frac{r^2j}{2}}f_{J_y}( j)f^{(j,r)}_{T_t}(y)dydj
\end{align*}

Note that
\begin{align}
f^{(j,r)}_{T_t}(y)=\frac{1}{2\pi}\int e^{i\xi y} \hat f^{(j,r)}_{T_t}(\xi)d\xi
\end{align}
where
\begin{align}
\hat f^{(j,r)}_{T_t}(\xi):=\int e^{-i\xi \tilde y}f^{(j,r)}_{T_t}(\tilde y)d\tilde y=\bbE^{(j,r)}\edg{\exp\brak{-i\xi T_t}}.
\end{align}
Note that, for $\bbP^{(j,r)}$-Brownian motion $B^{(j,r)}:=B+\int_0^\cdot r\rho{\rm\bf 1}_{[0,j]}(s)ds$,
\begin{align*}
dv_u&=\brak{\mu(u,v_u)-r\rho \sigma(u,v_u){\rm\bf 1}_{[0,j]}(u)}du+\sigma(u,v_u)dB^{(j,r)}_u\\
dT_u&=v_udu
\end{align*}

Then, the distribution of $(v^{(j,r)}_t,T^{(j,r)}_t)$ under $\bbP$ and the distribution of $(v_t,T_t)$ under $\bbP^{(j,r)}$ are identical. Therefore,
\begin{align}
\bbE^{(j,r)}\edg{\exp\brak{-i\xi T_t}}&=\bbE\edg{\exp\brak{-i\xi T^{(j,r)}_t}}.
\end{align}
In sum, we have
\begin{align*}
\bbE e^{-rZ_{J_{T_t}}}&=\iint e^{\frac{r^2j}{2}}f_{J_y}(j)\frac{1}{2\pi}\int e^{i\xi y}\bbE\edg{\exp(-i\xi T^{(j,r)}_t)}d\xi dydj\\
&=\frac{1}{2\pi}\iint e^{\frac{r^2j}{2}}\brak{\int f_{J_y}(j) e^{i\xi y} dy}\bbE\edg{\exp(-i\xi T^{(j,r)}_t)}d\xi dj
\end{align*}
\vspace*{1cm}\QEDB

\noindent\textit{Proof of Proposition \ref{propG}} 
The second part of the claim is obvious since $G(t,x,y,z)$ is the PDF of $(v^{(j,r)}_t, T^{(j,r)}_t)$ when $t\leq j$.
Consider the case when $t>j$. Since SDE \eqref{sdechanged} is identical to \eqref{sde} for $u>j$, we have
\begin{align}
\bbE\edg{\left.\exp\brak{-i\xi T^{(j,r)}_t}\right|\scF_j}=\exp(\Phi(t,j,-\xi,v^{(j,r)}_j,T^{(j,r)}_j))
\end{align}
by our definition of $\Phi$. If we take the expectation on both side, we have the first part of the claim.
\vspace*{1cm}\QEDB

\noindent\textit{Proof of Preposition \ref{propFTG}} Note that
\begin{align}
\bbE\exp(\Phi(t,j,-\xi,v^{(j,r)}_j,T^{(j,r)}_j))&=\bbE\exp\brak{\phi(t,j,-\xi,v^{(j,r)}_j)+i\psi(t,j,-\xi,v^{(j,r)}_j)T^{(j,r)}_j}\\
&=\iint \exp\brak{\phi(t,j,-\xi,x)+i\psi(t,j,-\xi,x)y}G(j,x,y)dydx\\
&=\int e^{\phi(t,j,-\xi,x)}\int\exp\brak{i\psi(t,j,-\xi,x)y}G(j,x,y)dydx
\end{align}
Note that
\begin{align}
\hat G(u,x,\eta):=\int e^{-i\eta y}G(u,x,y)dy
\end{align}
satisfies \eqref{FTG} and 
\begin{align}
\int\exp\brak{i\psi(t,j,-\xi,x)y}G(j,x,y)dy&=\hat G(j,x,-\psi(t,j,-\xi,x))\\
\int \exp\brak{-i\xi y}G(t,x,y)dy&=\hat G(t,x,\xi).
\end{align}
\vspace*{1cm}\QEDB

\noindent\textit{Proof of Theorem \ref{mgf_generalmodel}}
We can follow the steps from previous results. For $Q$ defined as in the proof of Theorem \ref{mainthm},
\begin{align}
\bbE e^{-rY_t}&=\bbE\edg{\bbE\edg{e^{-r\alpha J_{T_t}-r\beta Z_{J_{T_t}}}\vert J,B}}\\
&=\bbE\edg{e^{-r\alpha J_{T_t}-r\beta\rho B_{J_{T_t}}}\bbE\edg{e^{-r\beta \sqrt{1-\rho^2}W_{J_{T_t}}}\vert J,B}}\\
&=\bbE\edg{e^{-r\alpha J_{T_t}-r\beta\rho B_{J_{T_t}}+\frac{r^2\beta^2(1-\rho^2)}{2}J_{T_t}}}\\
&=\bbE\edg{\exp\brak{-r\beta\rho B_{J_{T_t}}+\brak{\frac{r^2\beta^2(1-\rho^2)}{2}-r\alpha}J_{T_t}}}\\
&=\iiint\exp\brak{-r\alpha j-r\beta\rho b+\frac{r^2\beta^2(1-\rho^2)}{2}j}f_{B_{j},T_t}(b,y)f_{J_y}(j)dy dbdj\\
&=\iiiint \exp\brak{-r\alpha j-r\beta\rho z+\frac{r^2\beta^2(j-(j\wedge t)\rho^2)}{2}}Q(j,t,x,y,z)f_{J_y}(j)dxdydzdj\\
&=\int e^{r^2\beta^2j/2-r\alpha j}e^{-r^2\beta^2\rho^2(j\wedge t)/2} \bbE\edg{g(T_t,j)e^{-r\beta\rho B_{j\wedge t}}} dj\\
&=\frac{1}{2\pi}\iint e^{r^2\beta^2j/2-r\alpha j}\brak{\int e^{i\xi y}f_{J_y}( j)dy}\bbE\edg{\exp\brak{-i\xi T^{(j,r\beta)}_t}}d\xi dj.
\end{align}
Here, $g(y,j):=f_{J_y}(j)$.
\vspace*{1cm}\QEDB


\section{Proof of Remark \ref{mainrmk}}
The argument is analogous to the proof of Theorem \ref{mainthm}. From our calculation in the proof of Theorem \ref{mainthm}, we have
\begin{align*}
\bbE& e^{i\theta Z_{J_{T_t}}}
=\iiint\exp\brak{i\theta\rho b-\frac{\theta^2(1-\rho^2)j}{2}}f_{B_{j},T_t}(b,y)f_{J_y}(j)dy dbdj\\
&=\int_0^t\frac{1}{\sqrt{2\pi j}}\iiint e^{i\theta\rho b-\frac{\theta^2(1-\rho^2)j}{2}-\frac{b^2}{2j}}p(j,t,x,y,b)f_{J_y}(j)dxdydbdj\\
&+\int_t^\infty\frac{1}{\sqrt{2\pi(j-t)}}\iiiint e^{i\theta\rho b-\frac{\theta^2(1-\rho^2)j}{2}-\frac{(b-z)^2}{2(j-t)}}q(t,x,y,z) f_{J_y}(j)dxdydzdbdj
\end{align*}
Note that
\begin{align*}
\frac{1}{\sqrt{2\pi(j-t)}}\int e^{i\theta\rho b-\frac{(b-z)^2}{2(j-t)}}db=\exp\brak{i\theta\rho z-\frac{\theta^2\rho^2(j-t)}{2}}
\end{align*}
and therefore, when $j>t$, by integrating with respect to $b$,
\begin{align*}
&\frac{1}{\sqrt{2\pi(j-t)}}\iiiint e^{i\theta\rho b-\frac{\theta^2(1-\rho^2)j}{2}-\frac{(b-z)^2}{2(j-t)}}q(t,x,y,z) f_{J_y}(j)dxdydzdb\\
&=\iiint \exp\brak{-\frac{\theta^2(1-\rho^2)j}{2}+i\theta\rho z-\frac{\theta^2\rho^2(j-t)}{2}}q(t,x,y,z) f_{J_y}(j)dxdydz\\
&=\iiint \exp\brak{i\theta\rho z-\frac{\theta^2(j-\rho^2 t)}{2}}q(t,x,y,z) f_{J_y}(j)dxdydz
\end{align*}
By changing the integration parameter $b$ to $z$ for the first term, we have,
\begin{align*}
\bbE e^{i\theta Z_{J_{T_t}}}&=\int_0^t\frac{1}{\sqrt{2\pi j}}\iiint \exp\brak{i\theta\rho z-\frac{\theta^2(1-\rho^2)j}{2}-\frac{z^2}{2j}}p(j,t,x,y,z)f_{J_y}(j)dxdydzdj\\
&+\int_t^\infty\iiint \exp\brak{i\theta\rho z-\frac{\theta^2(j-\rho^2 t)}{2}}q(t,x,y,z) f_{J_y}(j)dxdydz dj\\
&=\iiiint \exp\brak{i\theta\rho z-\frac{\theta^2(j-(j\wedge t)\rho^2)}{2}}Q(j,t,x,y,z)f_{J_y}(j)dxdydzdj
\end{align*}
where
\begin{align*}
Q(j,t,x,y,z):&=\frac{1}{\sqrt{2\pi j}}e^{-\frac{z^2}{2j}}p(j,t,x,y,z){\rm\bf 1}_{j\leq t}+q(t,x,y,z){\rm\bf 1}_{j>t}\\
&=f_{v_t,T_t,B_{j\wedge t}}(x,y,z).
\end{align*}
If we simplify above equation by considering $g(y,j):=f_{J_y}(j)$ as a distribution with variables $(y,j)$,
\begin{align*}
\bbE e^{i\theta Z_{N_{Y_t}}}&=\int e^{-\theta^2j/2}e^{\theta^2\rho^2(j\wedge t)/2} \bbE\edg{g(T_t,j)e^{i\theta\rho B_{j\wedge t}}} dj\\
&=\int e^{-\theta^2j/2}\bbE\edg{g(T_t,j)\scE^{\theta,j}_t} dj.
\end{align*}
Then, we have the following proposition by the same technique in the proof of Theorem \ref{mainthm}.
\begin{proposition}
	\begin{align}
	\bbE\edg{g(T_t,j)\scE^{\theta,j}_t}=\frac{1}{2\pi}\int \brak{\int g(y,j)e^{i\xi y}dy}\bbE\edg{e^{-i\xi T_t}\scE^{\theta,j}_t}d\xi
	\end{align}
\end{proposition}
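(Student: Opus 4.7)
My plan is to mirror the Fourier-inversion step used at the end of the proof of Theorem \ref{mainthm}, but without appealing to a change of measure (which is unavailable here because $\scE^{\theta,j}_t$ is complex-valued and $\scE^{\theta,j}_t d\bbP$ is not a bona fide probability measure, as noted in Warning 1.7.3.2 of \cite{JEANBLANC:2009wy}).

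First, for each fixed $j$ I regard $y\mapsto g(y,j)=f_{J_y}(j)$ as a function of $y$ and write its Fourier inversion representation
\[
g(y,j)=\frac{1}{2\pi}\int \hat g(\xi,j)\,e^{-i\xi y}\,d\xi,\qquad \hat g(\xi,j):=\int g(y',j)\,e^{i\xi y'}\,dy',
\]
which is valid under the standing regularity assumptions on the subordinator $J$ from Section \ref{setting}. Second, substitute $y=T_t$ pathwise and multiply through by $\scE^{\theta,j}_t$ to obtain the random-variable identity
\[
g(T_t,j)\,\scE^{\theta,j}_t=\frac{1}{2\pi}\int \hat g(\xi,j)\,e^{-i\xi T_t}\,\scE^{\theta,j}_t\,d\xi.
\]
Third, take expectations and interchange $\bbE$ with the $\xi$-integral by Fubini; unwinding $\hat g$ yields the claimed identity.

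The only delicate point is the interchange of $\bbE$ and the $\xi$-integral. It is controlled by the deterministic pointwise bound $|\scE^{\theta,j}_t|=\exp\bigl(\theta^2\rho^2(j\wedge t)/2\bigr)\le e^{\theta^2\rho^2 t/2}$, which gives $|\bbE[e^{-i\xi T_t}\scE^{\theta,j}_t]|\le e^{\theta^2\rho^2 t/2}$ uniformly in $\xi$, so Fubini reduces to $\xi$-integrability of $\hat g(\cdot,j)$. The main obstacle, then, is the regularity of $y\mapsto f_{J_y}(j)$: this map is not a probability density in $y$, and classical pointwise Fourier inversion need not apply in full generality. I would address this by either (i) appealing to the integrability and smoothness already implicit in the Lévy subordinators $J$ used in the framework of Section \ref{setting}, so that the Fourier inversion theorem applies in the classical sense, or (ii) approximating $g(\cdot,j)$ by a sequence of Schwartz functions, proving the identity for each approximant, and passing to the limit using the uniform bound on $|\scE^{\theta,j}_t|$ and dominated convergence.
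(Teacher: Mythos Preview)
Your approach is correct and closely parallels the paper's: both hinge on Fourier inversion in the $y$-variable to decouple $g$ from the expectation, and both rely on the deterministic bound $|\scE^{\theta,j}_t|\le e^{\theta^2\rho^2 t/2}$ to control the interchange. The one tactical difference is \emph{which} factor gets inverted. You apply Fourier inversion to $y\mapsto g(y,j)$ and then substitute $y=T_t$ pathwise; the paper instead first writes
\[
\bbE\edg{g(T_t,j)\scE^{\theta,j}_t}=\iint g(y,j)\,e^{i\theta\rho b+\theta^2\rho^2(j\wedge t)/2}\,f_{T_t,B_{j\wedge t}}(y,b)\,dy\,db
\]
and applies Fourier inversion to the density $y\mapsto f_{T_t,B_{j\wedge t}}(y,b)$. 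These are Parseval-dual manipulations that collapse to the same quadruple integral, but the paper's choice shifts the regularity burden from $g(\cdot,j)=f_{J_{(\cdot)}}(j)$---which, as you correctly note, is not a probability density in $y$---onto the joint density of $(T_t,B_{j\wedge t})$, where pointwise inversion is standard for the diffusion models of Section \ref{setting}. This sidesteps precisely the obstacle you flag, so your workaround (ii) becomes unnecessary if you invert on the other factor.
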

\begin{proof}
	Note that
	\begin{align*}
	&\bbE\edg{g(T_t,j)\scE^{\theta,j}_t}=\iint g(y,j)e^{i\theta\rho b+\frac{\theta^2\rho^2(j\wedge t)}{2}}f_{T_t,B_{j\wedge t}}(y,b)dydb\\
	&=\iint g(y,j)e^{i\theta\rho b+\frac{\theta^2\rho^2(j\wedge t)}{2}}\brak{\frac{1}{2\pi} \int e^{i\xi y}\int e^{-i\xi\tilde y}f_{T_t,B_{j\wedge t}}(\tilde y,b)d\tilde y d\xi}dydb\\
	&=\frac{1}{2\pi}\iiiint g(y,j)e^{i\xi y} e^{i\theta\rho b+\frac{\theta^2\rho^2(j\wedge t)}{2}}e^{-i\xi\tilde y}f_{T_t,B_{j\wedge t}}(\tilde y,b)d\tilde y  dydb d\xi\\
	&= \frac{1}{2\pi}\int \brak{\int g(y,j)e^{i\xi y}dy}\bbE\edg{e^{-i\xi T_t}\scE^{\theta,j}_t}d\xi
	\end{align*}
\end{proof}
\noindent Therefore, 
\begin{align}
\bbE e^{i\theta Z_{N_{Y_t}}}=\frac{1}{2\pi}\iint e^{-\theta^2j/2}\brak{\int f_{J_y}( j)e^{i\xi y}dy}\bbE\edg{e^{-i\xi T_t}\scE^{\theta,j}_t}d\xi dj
\end{align}

\section{Calculating $\bbE\edg{e^{-i\xi T_t}\scE^{\theta,j}_t}$ using Fokker Planck equation}
\begin{lemma} For $t>j$,
	\begin{align}
	\bbE\edg{e^{-i\xi T_t}\scE^{\theta,j}_t}=
	\bbE\edg{\exp(\Phi(t,j,\xi,v_j,T_j))\scE^{\theta,j}_j}
	\end{align}
\end{lemma}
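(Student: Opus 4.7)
The plan is to exploit the fact that for $t > j$, the quantity $\scE^{\theta,j}_t$ becomes $\scF_j$-measurable, reducing the identity to a straightforward conditioning argument. First, I would observe that since $j \wedge t = j$ whenever $t > j$, we have $\scE^{\theta,j}_t = \scE^{\theta,j}_j = \exp\brak{i\theta\rho B_j + \theta^2\rho^2 j/2}$, which depends only on $B_j$ and is therefore $\scF_j$-measurable.

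Next, I would apply the tower property to pull this factor out of the expectation:
\begin{align*}
\bbE\edg{e^{-i\xi T_t}\scE^{\theta,j}_t} = \bbE\edg{\scE^{\theta,j}_j\,\bbE\edg{e^{-i\xi T_t}\,\big\vert\,\scF_j}}.
\end{align*}
The pair $(v, T)$ is Markovian (since $v$ solves an autonomous SDE driven by $B$ and $T$ is its time integral), so the inner conditional expectation depends only on $(v_j, T_j)$. By the defining identity $\bbE\edg{e^{i\eta T_t}\vert\scF_s} = \exp(\Phi(t,s,\eta,v_s,T_s))$ applied with $\eta = -\xi$, this inner expectation equals $\exp(\Phi(t,j,-\xi,v_j,T_j))$. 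Substituting back yields the claimed formula, modulo the sign convention $\xi \leftrightarrow -\xi$ in the argument of $\Phi$ that the lemma's right-hand side appears to absorb implicitly.

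The main obstacle I anticipate is justifying carefully that the Markov property of $(v,T)$ transfers from its natural filtration $\scF^{v,T}$ to the full filtration $\bbF$. Since Section \ref{setting} stipulates that the additional randomness carried by $\bbF$, namely the Brownian motion $W$ and the subordinator $J$, is independent of $B$, and the post-$j$ evolution of $(v,T)$ depends only on the increments $B_{j+\cdot} - B_j$ together with the state $(v_j, T_j)$, this enlargement of filtration does not disrupt the Markovian structure. Apart from this independence check, the proof is a routine application of the tower property and the definition of $\Phi$.
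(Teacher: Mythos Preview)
Your proposal is correct and follows exactly the same route as the paper's own proof: observe that $\scE^{\theta,j}_t=\scE^{\theta,j}_j\in\scF_j$ when $t>j$, condition on $\scF_j$, and invoke the definition of $\Phi$. You are in fact more careful than the paper on two points: you flag the sign convention $\xi\leftrightarrow-\xi$ (which the paper's proof silently absorbs), and you explicitly address why the Markov property of $(v,T)$ survives in the larger filtration $\bbF$ thanks to the independence of $W$ and $J$ from $B$, whereas the paper simply writes ``by our definition of $\Phi$''.
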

\begin{proof}
	For $t>j$, we have $\scE^{\theta,j}_t=\scE^{\theta,j}_j\in\scF_j$ and therefore,
	\begin{align}
	\bbE\edg{\left.\exp\brak{-i\xi T_t}\scE^{\theta,j}_t\right|\scF_j}&=\scE^{\theta,j}_j\bbE\edg{\left.\exp\brak{-i\xi T_t}\right|\scF_j}\\
	&=\scE^{\theta,j}_j\exp(\Phi(t,j,\xi,v_j,T_j))
	\end{align}
	by our definition of $\Phi$. We prove the lemma by taking the expectation on both side.
\end{proof}
Above lemma tell us that we only need to know distribution of $(v_t,T_t,B_t)$ for $t\leq j$ to calculate
$\bbE\edg{e^{-i\xi T_t}\scE^{\theta,j}_t}$. 
\begin{proposition}
	Let \begin{align}
	G^\theta(t,x,y):=\int\exp\brak{i\theta\rho z+\frac{\theta^2\rho^2t}{2}}f_{v_t,T_t,B_t}(x,y,z)dz.
	\end{align}
	Then, $G^\theta$ satisfies
	\begin{align}\label{FKGappend}
	(\partial_u G^\theta)(u,x,y)&=\half\partial_{xx}\edg{|\sigma(u,x)|^2G^\theta(u,x,y)}-\partial_{x}\edg{\brak{\mu(u,x)+i\theta\rho\sigma(u,x)}G^\theta(u,x,y)}\\\notag
	&\quad-x\partial_yG^\theta(u,x,y)\\
	G^\theta(0,x,y)
	&=\delta_{v_0}(x)\delta_{T_0}(y)
	\end{align}
\end{proposition}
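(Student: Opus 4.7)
The plan is to derive the PDE for $G^\theta$ directly from the Fokker--Planck equation \eqref{FPq} for the joint density $q(u,x,y,z):=f_{v_u,T_u,B_u}(x,y,z)$, which is already established in the paper. Introduce the weight $w(u,z):=\exp\!\left(i\theta\rho z+\tfrac{\theta^2\rho^2 u}{2}\right)$, so that $G^\theta(u,x,y)=\int w(u,z)\,q(u,x,y,z)\,dz$, and record the key identities $\partial_u w=\frac{\theta^2\rho^2}{2}w$, $\partial_z w=i\theta\rho\,w$, $\partial_{zz}w=-\theta^2\rho^2 w$. Differentiating under the integral sign (to be justified below) gives
\begin{align*}
\partial_u G^\theta(u,x,y)=\frac{\theta^2\rho^2}{2}G^\theta(u,x,y)+\int w(u,z)\,\partial_u q(u,x,y,z)\,dz.
\end{align*}

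Next I would insert \eqref{FPq} into the integrand and treat the five resulting terms separately. Since $\mu(u,x)$ and $\sigma(u,x)$ do not depend on $z$, the terms $\frac12\partial_{xx}[\sigma^2 q]$ and $\partial_x[\mu q]$ commute with $\int w\,dz$, producing $\frac12\partial_{xx}[\sigma^2 G^\theta]$ and $\partial_x[\mu G^\theta]$; the term $x\,\partial_y q$ likewise passes through to $x\,\partial_y G^\theta$. The $z$-derivative terms are the interesting ones. Integration by parts in $z$ (using decay of $q$ at infinity) yields
\begin{align*}
\int w\,\partial_{xz}[\sigma q]\,dz &= -\int (\partial_z w)\,\partial_x[\sigma q]\,dz=-i\theta\rho\,\partial_x[\sigma G^\theta],\\
\frac12\int w\,\partial_{zz}q\,dz &= \frac12\int (\partial_{zz}w)\,q\,dz=-\frac{\theta^2\rho^2}{2}G^\theta.
\end{align*}
The second of these exactly cancels the $\frac{\theta^2\rho^2}{2}G^\theta$ coming from $\partial_u w$, and summing all contributions produces
\begin{align*}
\partial_u G^\theta=\tfrac12\partial_{xx}[\sigma^2 G^\theta]-\partial_x\!\left[\bigl(\mu+i\theta\rho\sigma\bigr)G^\theta\right]-x\,\partial_y G^\theta,
\end{align*}
which is exactly \eqref{FKGappend}. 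For the initial condition I would observe that $v_0,T_0$ are deterministic and $B_0=0$, so $q(0,x,y,z)=\delta_{v_0}(x)\delta_{T_0}(y)\delta_0(z)$; integrating against $w(0,z)=e^{i\theta\rho z}$ in $z$ collapses $\delta_0(z)$ to the factor $1$, giving $G^\theta(0,x,y)=\delta_{v_0}(x)\delta_{T_0}(y)$.

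The main technical obstacle is justifying the interchange of $\partial_u$ with $\int dz$ and the two integrations by parts in $z$. Both require adequate decay of $q$ and its first two $z$-derivatives as $|z|\to\infty$. For nondegenerate SDEs with smooth coefficients one can invoke parabolic regularity so that, for each $u>0$, the density $q(u,x,y,\cdot)$ lies in the Schwartz class and all boundary terms vanish. In degenerate situations (e.g.\ when $\sigma$ depends only on time, so that $v_u$ is an affine function of $B_u$ and the law of $(v_u,T_u,B_u)$ is supported on a lower-dimensional manifold), the cleanest route is to interpret \eqref{FPq} distributionally, or to mollify $q$ in $z$, run the argument on the mollification, and pass to the limit — the identity is preserved because all manipulations are linear in $q$.
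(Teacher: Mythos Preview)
Your argument is correct and follows essentially the same route as the paper: start from the Fokker--Planck PDE for the joint density $f_{v_t,T_t,B_t}$, multiply by the weight $\exp(i\theta\rho z+\theta^2\rho^2 u/2)$, integrate in $z$, and use integration by parts on the $\partial_{xz}$ and $\partial_{zz}$ terms so that the $\tfrac{\theta^2\rho^2}{2}G^\theta$ contributions cancel. Your treatment is slightly more explicit about the initial condition and the regularity needed to justify the interchanges, but the computation is identical to the paper's.
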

\begin{proof}
	Note that $H(t,x,y,z):=f_{v_t,T_t,B_t}(x,y,z)$ satisfies the Fokker Planck PDE:
	\begin{align}
	\partial_uH=\half \partial_{xx}\edg{\sigma^2H}+\half\partial_{zz} H+\partial_{xz}\edg{\sigma H}-\partial_x[\mu H]-\partial_y[xH].
	\end{align}
	If we multiply both side with $\exp\brak{i\theta\rho z+\frac{\theta^2\rho^2u}{2}}$ and integrate with respect to $z$, then the left hand side becomes
	\begin{align}
	\int\exp\brak{i\theta\rho z+\frac{\theta^2\rho^2u}{2}}(\partial_uH)(u,x,y,z)dz=	\partial_uG^\theta(u,x,y)-\frac{\theta^2\rho^2}{2}G^\theta(u,x,y).
	\end{align}
	On the other hand, we have
	\begin{align}
	\int\exp\brak{i\theta\rho z+\frac{\theta^2\rho^2u}{2}}(\partial_{z}H)(u,x,y,z)dz&=-i\theta\rho G^\theta(u,x,y)\\
	\int\exp\brak{i\theta\rho z+\frac{\theta^2\rho^2u}{2}}(\partial_{zz}H)(u,x,y,z)dz&=-\theta^2\rho^2 G^\theta(u,x,y)
	\end{align}
	by integration by parts. Therefore, the right-hand side becomes
	\begin{align}
	\half \partial_{xx}\edg{\sigma^2G^\theta}-\frac{\theta^2\rho^2}{2}G^\theta-i\theta\rho\partial_{x}\edg{\sigma G^\theta}-\partial_x[\mu G^\theta]-x\partial_y[G^\theta].
	\end{align}
	We proved the claim.
\end{proof}

\begin{proposition}\label{propG0} For solution $G^\theta$ of \eqref{FKGappend},
	\begin{align}
	\bbE\edg{e^{-i\xi T_t}\scE^{\theta,j}_t}=\begin{cases}
	\iint\exp\brak{\Phi(t,j,\xi,x,y)}G^\theta(j,x,y)dxdy&\text {if $t>j$}\\
	\iint \exp\brak{-i\xi y}G^\theta(t,x,y)dxdy&\text {if $t\leq j$}
	\end{cases}
	\end{align}
\end{proposition}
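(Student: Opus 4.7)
The plan is to split into the two cases $t>j$ and $t\le j$ and in each case reduce the claim to a direct application of the definition of $G^\theta$, using the preceding lemma to handle the $t>j$ case.

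For the case $t\le j$, the pathwise identity $\scE^{\theta,j}_t=\exp(i\theta\rho B_t+\theta^2\rho^2 t/2)$ means the random variable $e^{-i\xi T_t}\scE^{\theta,j}_t$ is measurable with respect to $(v_t,T_t,B_t)$ (explicitly, it equals $e^{-i\xi T_t}\exp(i\theta\rho B_t+\theta^2\rho^2 t/2)$, which does not depend on $v_t$). I would therefore write the expectation as a triple integral against $f_{v_t,T_t,B_t}(x,y,z)$, pull the $e^{-i\xi y}$ factor outside the innermost $z$-integral, and recognize the remaining integral as $G^\theta(t,x,y)$ by its very definition. This gives $\iint e^{-i\xi y}G^\theta(t,x,y)\,dx\,dy$ immediately.

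For the case $t>j$, I would invoke the lemma proved just above the proposition, which already gives
\begin{align*}
\bbE\edg{e^{-i\xi T_t}\scE^{\theta,j}_t}=\bbE\edg{\exp\brak{\Phi(t,j,\xi,v_j,T_j)}\scE^{\theta,j}_j}.
\end{align*}
Since $\scE^{\theta,j}_j=\exp(i\theta\rho B_j+\theta^2\rho^2 j/2)$, the integrand on the right is measurable with respect to $(v_j,T_j,B_j)$. Writing the expectation as a triple integral against $f_{v_j,T_j,B_j}(x,y,z)$, the $\Phi$-factor depends only on $(x,y)$ and can be pulled outside the $z$-integral. The remaining $z$-integral is exactly $G^\theta(j,x,y)$ by definition, yielding $\iint\exp(\Phi(t,j,\xi,x,y))G^\theta(j,x,y)\,dx\,dy$.

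There is no genuine obstacle here: the whole argument is bookkeeping that combines the tower property (already encapsulated in the lemma) with the definition of $G^\theta$ as the partial Laplace transform of the joint density in the $B$-variable. The only thing to be careful about is that $G^\theta$ already absorbs the deterministic factor $\exp(\theta^2\rho^2 t/2)$, so in the $t\le j$ case the Brownian normalizing constant from $\scE^{\theta,j}_t$ matches exactly the $t$-dependence built into $G^\theta$, and in the $t>j$ case the matching constant $\exp(\theta^2\rho^2 j/2)$ aligns with the $j$-dependence of $G^\theta(j,\cdot,\cdot)$; I would verify this bookkeeping explicitly once to make sure no factor of $\exp(\theta^2\rho^2(\cdot)/2)$ is double-counted or missing.
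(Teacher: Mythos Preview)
Your proposal is correct and follows exactly the approach the paper intends: the paper does not spell out a separate proof of this proposition, but its proof of the analogous Proposition~\ref{propG} proceeds in precisely the same two steps you describe---the $t\le j$ case by the definition of $G^\theta$ as the $z$-integrated density, and the $t>j$ case by first applying the preceding lemma (the tower property reducing to time $j$) and then invoking that same definition. Your bookkeeping remark about the factor $\exp(\theta^2\rho^2(\cdot)/2)$ is exactly right and matches the definition of $G^\theta$ given just above.
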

If $\Phi(t,s,\xi,x,y)$ is affine with respect to $y$, then the Fokker-Planck PDE can be simplified.
\begin{proposition}\label{propFTG0}
	Assume that
	\begin{align}
	\Phi(t,s,\xi,x,y)=\phi(t,s,\xi,x)+i\psi(t,s,\xi,x)y
	\end{align}
	In this case,
	\begin{align}
	\bbE\edg{\exp\brak{-i\xi T_t}\scE^{\theta,j}_t}=\begin{cases}
	\int e^{\phi(t,j,\xi,x)} \hat G^\theta(j,x,-\psi(t,j,\xi,x))dx &\text{ for $t>j$}\\
	\int \hat G^\theta(t,x,\xi) dx&\text{ for $t\leq j$}
	\end{cases}
	\end{align}
	where $\hat G^\theta$ satisfies $\hat G^\theta(0,x,\eta)=\delta_{v_0}(x)$ and 
	\begin{align}\label{FTGappend}
	(\partial_u \hat G^\theta)(u,x,\eta)&=\half\partial_{xx}\edg{|\sigma(u,x)|^2\hat G^\theta(u,x,\eta)}-\partial_{x}\edg{\brak{\mu(u,x)+i\theta \rho\sigma(u,x)}\hat G^\theta(u,x,\eta)}\\
	&\quad-ix\eta\hat G^\theta(u,x,\eta)
	\end{align}
\end{proposition}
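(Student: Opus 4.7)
The plan is to reduce the dimension by taking the Fourier transform in the $y$-variable. Define
\begin{align*}
\hat G^\theta(u,x,\eta):=\int e^{-i\eta y}G^\theta(u,x,y)\,dy,
\end{align*}
and derive its PDE by Fourier-transforming \eqref{FKGappend} in $y$. The only $y$-derivative in \eqref{FKGappend} is the term $-x\partial_y G^\theta$, and integration by parts (together with the fact that $G^\theta$ is a transformed density so it has sufficient decay) turns $\partial_y$ into multiplication by $i\eta$. All other terms are purely $x$-differential operators commuting with the Fourier transform in $y$, and the initial condition $\delta_{v_0}(x)\delta_{T_0}(y)$ transforms to $\delta_{v_0}(x)$. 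This yields precisely the PDE \eqref{FTGappend} for $\hat G^\theta$ with the claimed initial condition.

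Next I would plug the affine form of $\Phi$ into the two cases of Proposition \ref{propG0}. For $t\leq j$, the claim is immediate from the definition of $\hat G^\theta$:
\begin{align*}
\iint e^{-i\xi y}G^\theta(t,x,y)\,dx\,dy=\int\hat G^\theta(t,x,\xi)\,dx.
\end{align*}
For $t>j$, substitute $\Phi(t,j,\xi,x,y)=\phi(t,j,\xi,x)+i\psi(t,j,\xi,x)y$, pull the $x$-dependent factor $e^{\phi(t,j,\xi,x)}$ out of the inner $y$-integral, and apply Fubini:
\begin{align*}
\iint e^{\Phi(t,j,\xi,x,y)}G^\theta(j,x,y)\,dx\,dy=\int e^{\phi(t,j,\xi,x)}\hat G^\theta(j,x,-\psi(t,j,\xi,x))\,dx,
\end{align*}
where we used that $\int e^{i\psi y}G^\theta(j,x,y)\,dy=\hat G^\theta(j,x,-\psi)$ with $\psi=\psi(t,j,\xi,x)$. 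This delivers the formula exactly as stated.

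The only potentially delicate step is justifying the Fourier-transform manipulations, that is, the decay of $G^\theta$ in $y$ needed to integrate $-x\partial_y G^\theta$ by parts and to apply Fubini. Since $T_u=\int_0^u v_s\,ds$ is nonnegative and has finite moments under the assumptions of Section \ref{setting}, $G^\theta(u,x,\cdot)$ inherits sufficient integrability from the joint density of $(v_u,T_u,B_u)$, and the measure change $\scE^{\theta,j}_t$ only multiplies by a $\scF_j$-measurable factor that does not affect the $y$-decay. Given those standard integrability properties, the above computation is rigorous and completes the proof. The main obstacle is therefore not conceptual but purely technical: verifying the regularity/integrability needed to legitimize the Fourier-transform reduction of \eqref{FKGappend} to \eqref{FTGappend}.
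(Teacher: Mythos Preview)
Your proof is correct and follows essentially the same approach as the paper: define $\hat G^\theta$ as the Fourier transform of $G^\theta$ in $y$, derive \eqref{FTGappend} by transforming \eqref{FKGappend}, and then substitute the affine form of $\Phi$ into the two cases of Proposition \ref{propG0} to recognize the inner $y$-integral as $\hat G^\theta$ evaluated at the appropriate frequency. The paper's proof is in fact terser than yours on the PDE derivation and omits the integrability discussion you added.
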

\begin{proof}
	Let us define
	\begin{align}
	\hat G^\theta(t,x,\eta):=\int e^{-i\eta y}G(t,x,y,z)dy.
	\end{align}
	Then, it satisfies the Fourier transform of \eqref{FKGappend}, that is,
	\begin{align}
	(\partial_u \hat G^\theta)(u,x,\eta)&=\half\partial_{xx}\edg{|\sigma(u,x)|^2\hat G^\theta(u,x,\eta)}\\
	&\quad-\partial_{x}\edg{\brak{\mu(u,x)+i\theta \rho\sigma(u,x)}\hat G^\theta(u,x,\eta)}\\
	&\quad-ix\eta\hat G^\theta(u,x,\eta).
	\end{align}
	Note that
	\begin{align}
	&\iint\exp\brak{\Phi(t,j,\xi,x,y)}G^\theta(j,x,y)dxdy\\
	&=\int e^{\phi(t,s,\xi,x)}\int e^{i\psi(t,s,\xi,x)y}G^\theta(j,x,y)dydx
	\end{align}
	By the definition of $\hat G^\theta$,
	\begin{align}
	\int e^{i\psi(t,j,\xi,x)y}G^\theta(j,x,y)dy&=\hat G^\theta(j,x,-\psi(t,j,\xi,x))
	\end{align}
	and the claim is proved.
\end{proof}

\newpage\clearpage
\renewcommand{\baselinestretch}{1} 
\normalsize
\bibliographystyle{rfs}
\bibliography{untitled}

\begin{thebibliography}{31}
\providecommand{\natexlab}[1]{#1}
\providecommand{\url}[1]{\texttt{#1}}
\providecommand{\urlprefix}{URL }

\bibitem[{Ait-Sahalia et~al.(2013)Ait-Sahalia, Fan, and Li}]{ait2013leverage}
Ait-Sahalia, Y., J.~Fan, and Y.~Li. 2013.
\newblock The leverage effect puzzle: Disentangling sources of bias at high
  frequency.
\newblock \emph{Journal of Financial Economics} 109:224--249.

\bibitem[{Bakshi et~al.(1997)Bakshi, Cao, and Chen}]{bakshi1997empirical}
Bakshi, G., C.~Cao, and Z.~Chen. 1997.
\newblock Empirical performance of alternative option pricing models.
\newblock \emph{Journal of Finance} 52:2003--2049.

\bibitem[{Barndorff-Nielsen(1997)}]{barndorff1997processes}
Barndorff-Nielsen, O.~E. 1997.
\newblock Processes of normal inverse Gaussian type.
\newblock \emph{Finance and Stochastics} 2:41--68.

\bibitem[{Bates(1996)}]{bates1996jumps}
Bates, D.~S. 1996.
\newblock Jumps and stochastic volatility: Exchange rate processes implicit in
  deutsche mark options.
\newblock \emph{Review of Financial Studies} 9:69--107.

\bibitem[{Bates(2000)}]{bates2000post}
Bates, D.~S. 2000.
\newblock Post-'87 crash fears in the S\&P 500 futures option market.
\newblock \emph{Journal of Econometrics} 94:181--238.

\bibitem[{Bates(2012)}]{bates2012us}
Bates, D.~S. 2012.
\newblock US stock market crash risk, 1926--2010.
\newblock \emph{Journal of Financial Economics} 105:229--259.

\bibitem[{Biagini et~al.(2003)Biagini, {\O}ksendal et~al.}]{biagini2003minimal}
Biagini, F., B.~{\O}ksendal, et~al. 2003.
\newblock Minimal variance hedging for fractional Brownian motion.
\newblock \emph{Methods and applications of analysis} 10:347--362.

\bibitem[{Black and Scholes(1973)}]{black1973pricing}
Black, F., and M.~Scholes. 1973.
\newblock The pricing of options and corporate liabilities.
\newblock \emph{Journal of Political Economy} 81:637--654.

\bibitem[{Carr et~al.(2003)Carr, Geman, Madan, and Yor}]{carr2003stochastic}
Carr, P., H.~Geman, D.~B. Madan, and M.~Yor. 2003.
\newblock Stochastic volatility for {L}{\'e}vy processes.
\newblock \emph{Mathematical Finance} 13:345--382.

\bibitem[{Carr and Wu(2004)}]{Carr:2004hl}
Carr, P., and L.~Wu. 2004.
\newblock {Time-changed L{\'e}vy processes and option pricing}.
\newblock \emph{Journal of Financial Economics} 71:113--141.

\bibitem[{Cheridito(2003)}]{cheridito2003arbitrage}
Cheridito, P. 2003.
\newblock Arbitrage in fractional Brownian motion models.
\newblock \emph{Finance and Stochastics} 7:533--553.

\bibitem[{Cont and Tankov(2004)}]{cont2004financial}
Cont, R., and P.~Tankov. 2004.
\newblock \emph{Financial modelling with jump processes}.
\newblock Chapman and Hall.

\bibitem[{Dalang et~al.(1990)Dalang, Morton, and
  Willinger}]{dalang1990equivalent}
Dalang, R.~C., A.~Morton, and W.~Willinger. 1990.
\newblock Equivalent martingale measures and no-arbitrage in stochastic
  securities market models.
\newblock \emph{Stochastics: An International Journal of Probability and
  Stochastic Processes} 29:185--201.

\bibitem[{Delbaen and Schachermayer(1994)}]{delbaen1994general}
Delbaen, F., and W.~Schachermayer. 1994.
\newblock A general version of the fundamental theorem of asset pricing.
\newblock \emph{Mathematische annalen} 300:463--520.

\bibitem[{Delbaen et~al.(1996)Delbaen, Schachermayer
  et~al.}]{delbaen1996variance}
Delbaen, F., W.~Schachermayer, et~al. 1996.
\newblock The variance-optimal martingale measure for continuous processes.
\newblock \emph{Bernoulli} 2:81--105.

\bibitem[{Fallahgoul et~al.(2019)Fallahgoul, Hugonnier, and
  Mancini}]{fallahgoul2019time}
Fallahgoul, H., J.~Hugonnier, and L.~Mancini. 2019.
\newblock Time changes, L{\'e}vy jumps and asset returns.
\newblock \emph{Monash CQFIS working paper} .

\bibitem[{Fallahgoul and Num(2019)}]{fn2019}
Fallahgoul, H., and K.~Num. 2019.
\newblock Time-changed {L}\'evy processes and option pricing: a critical
  comment.
\newblock \emph{Monash CQFIS working paper} .

\bibitem[{Follmer and Schweizer(1991)}]{follmer1991hedging}
Follmer, H., and M.~Schweizer. 1991.
\newblock Hedging of contingent claims.
\newblock \emph{Applied stochastic analysis} 5:389.

\bibitem[{Frittelli(2000)}]{frittelli2000minimal}
Frittelli, M. 2000.
\newblock The minimal entropy martingale measure and the valuation problem in
  incomplete markets.
\newblock \emph{Mathematical finance} 10:39--52.

\bibitem[{Guasoni et~al.(2008)Guasoni, R{\'a}sonyi, Schachermayer
  et~al.}]{guasoni2008consistent}
Guasoni, P., M.~R{\'a}sonyi, W.~Schachermayer, et~al. 2008.
\newblock Consistent price systems and face-lifting pricing under transaction
  costs.
\newblock \emph{Annals of Applied Probability} 18:491--520.

\bibitem[{He et~al.(1992)He, Wang, and Yan}]{sheng:1992semi}
He, S.~W., J.~G. Wang, and J.~A. Yan. 1992.
\newblock \emph{{Semimartingale Theory and Stochastic Calculus}}.
\newblock 1st ed. Beijing: Science Press.

\bibitem[{Huang and Wu(2005)}]{Huang:in}
Huang, J.~Z., and L.~Wu. 2005.
\newblock {Specification analysis of option pricing models based on
  time-changed L{\'e}vy processes}.
\newblock \emph{Journal of Finance} 59:1405--1439.

\bibitem[{Jarrow et~al.(2009)Jarrow, Protter, and Sayit}]{jarrow2009no}
Jarrow, R.~A., P.~Protter, and H.~Sayit. 2009.
\newblock No arbitrage without semimartingales.
\newblock \emph{Annals of Applied Probability} 19:596--616.

\bibitem[{Jeanblanc et~al.(2009)Jeanblanc, Yor, and Chesney}]{JEANBLANC:2009wy}
Jeanblanc, M., M.~Yor, and M.~Chesney. 2009.
\newblock \emph{{Mathematical Methods for Financial Markets}}.
\newblock Springer Science {\&} Business Media.

\bibitem[{Madan and Yor(2006)}]{madan2006cgmy}
Madan, D., and M.~Yor. 2006.
\newblock CGMY and Meixner subordinators are absolutely continuous with respect
  to one sided stable subordinators.
\newblock \emph{Prepublication du Laboratoire de Probabilites et Modeles
  Aleatoires} .

\bibitem[{Madan and Seneta(1990)}]{madan1990variance}
Madan, D.~B., and E.~Seneta. 1990.
\newblock The variance gamma (VG) model for share market returns.
\newblock \emph{Journal of Business} pp. 511--524.

\bibitem[{Ornthanalai(2014)}]{ornthanalai2014levy}
Ornthanalai, C. 2014.
\newblock Levy jump risk: Evidence from options and returns.
\newblock \emph{Journal of Financial Economics} 112:69--90.

\bibitem[{Pan(2002)}]{pan2002jump}
Pan, J. 2002.
\newblock The jump-risk premia implicit in options: Evidence from an integrated
  time-series study.
\newblock \emph{Journal of Financial Economics} 63:3--50.

\bibitem[{Protter(2013)}]{Protter:2004wf}
Protter, P. 2013.
\newblock \emph{Stochastic Integration and Differential Equations}, vol.~21.
\newblock Springer.

\bibitem[{Schweizer(1995{\natexlab{a}})}]{schweizer1995minimal}
Schweizer, M. 1995{\natexlab{a}}.
\newblock On the minimal martingale measure and the m{\"o}llmer-schweizer
  decomposition.
\newblock \emph{Stochastic analysis and applications} 13:573--599.

\bibitem[{Schweizer(1995{\natexlab{b}})}]{schweizer1995variance}
Schweizer, M. 1995{\natexlab{b}}.
\newblock Variance-optimal hedging in discrete time.
\newblock \emph{Mathematics of Operations Research} 20:1--32.

\end{thebibliography}

\end{document}